\theoremstyle{plain}
\newtheorem{theorem}{Theorem}[section]
\newtheorem{lemma}[theorem]{Lemma}
\newtheorem{corollary}[theorem]{Corollary}
\newtheorem{proposition}[theorem]{Proposition}
\theoremstyle{definition}
\newtheorem{definition}[theorem]{Definition}
\newtheorem{remark}[theorem]{Remark}
\newtheorem{example}[theorem]{Example}
\DeclareMathOperator{\Agt}{\mathbb{A}gt} 
\DeclareMathOperator{\St}{St} 
\DeclareMathOperator{\Prop}{\Pi} 
\DeclareMathOperator{\Act}{Act} 
\DeclareMathOperator{\paths}{\mathsf{paths}} 
\newcommand{\void}{\ensuremath{\mathsf{any}\xspace}}
\newcommand{\card}{\ensuremath{\mathsf{card}\xspace}}
\newcommand{\bd}{\ensuremath{\mathsf{BD}\xspace}}
\newcommand{\rbb}{\ensuremath{\mathsf{RBB}\xspace}}
\newcommand{\avact}{\ensuremath{\mathsf{action}\xspace}}
\newcommand{\tlimbound}{\Gamma} 
\newcommand{\llangle}{{\langle\hspace{-0.8mm}\langle}}
\newcommand{\rrangle}{{\rangle\hspace{-0.8mm}\rangle}}
\newcommand\refhereortechnicalreport{
\ifthenelse{\boolean{reportversion}}{
\!Section \ref{technicalreport}}
{
\!\!the technical report \cite{technicalreport}}
}
\newcommand{\anote}[1]{}
\newcommand{\afnote}[1]{}
\newcommand{\rnote}[1]{}
\newcommand{\rfnote}[1]{}
\newcommand{\vnote}[1]{}
\newcommand{\vfnote}[1]{}
\newcommand{\acta}{\ensuremath{\alpha\xspace}}
\newcommand{\coop}[2][]{\langle\!\langle{#2}\rangle\!\rangle_{_{\!\mathit{#1}}}\,}
\newcommand{\atlx}{\mathord \mathsf{X}\, }
\newcommand{\atlf}{\mathord \mathsf{F}\, }
\newcommand{\atlg}{\mathord \mathsf{G}\, }
\newcommand{\atlu}{\, \mathsf{U} \, }
\newcommand{\atlr}{\, \mathsf{R} \, }
\newcommand{\X}{\atlx}
\newcommand{\F}{\atlf}
\newcommand{\G}{\atlg}
\newcommand{\U}{\atlu}
\newcommand{\Rl}{\atlr}
\newcommand{\atla}{\varphi}
\newcommand{\atlb}{\psi}
\newcommand{\Logicname}[1]{\ensuremath{\mathsf{#1}}}
\newcommand{\CTL}{\Logicname{CTL}\xspace}
\newcommand{\ATL}{\Logicname{ATL}\xspace}
\newcommand{\ATLs}{\Logicname{ATL^*}\xspace}
\newcommand{\ATLplus}{\Logicname{ATL^+}\xspace}
\newcommand{\GTS}{\Logicname{GTS}\xspace}
\newcommand{\CGM}{\Logicname{CGM}\xspace}
\newcommand{\IF}{\Logicname{IF}\xspace}
\newcommand{\vrf}{\ensuremath{\mathbf{V}\xspace}}
\newcommand{\ovrf}{\ensuremath{\overline{\vrf}\xspace}}
\newcommand{\ctr}{\ensuremath{\mathbf{C}\xspace}}
\newcommand{\octr}{\ensuremath{\overline{\mathbf{C}}\xspace}}
\newcommand{\pl}{\ensuremath{\mathbf{P}\xspace}}
\newcommand{\opl}{\ensuremath{\mathbf{\overline{P}}\xspace}}
\newcommand{\pos}{\ensuremath{\mathsf{Pos}\xspace}}
\newcommand{\setpos}{\ensuremath{\mathsf{POS}\xspace}}
\newcommand{\tlim}{\ensuremath{\gamma\xspace}}
\newcommand{\initlim}{\ensuremath{\gamma_{0}\xspace}}
\newcommand{\initst}{\ensuremath{q_{0}\xspace}}
\newcommand{\qzero}{\ensuremath{q_{in}\xspace}}
\newcommand{\evalgame}{\ensuremath{\mathcal{G}\xspace}}
\newcommand{\embgame}{\ensuremath{\mathbf{g}\xspace}}
\newcommand{\step}{\ensuremath{\mathsf{step}\xspace}}
\newcommand{\strev}{\ensuremath{\Sigma\xspace}}
\newcommand{\cnf}{\ensuremath{\mathsf{Conf}\xspace}}
\newcommand{\win}{\ensuremath{\mathsf{win}\xspace}}
\newcommand{\lose}{\ensuremath{\mathsf{lose}\xspace}}
\newcommand{\defstyle}{\textbf}
\newcommand{\bbN}{\ensuremath{\mathbb{N}}}
\begin{document}

\markboth{V. Goranko, A. Kuusisto and R. R\"onnholm}{Game-Theoretic Semantics for Alternating-Time Temporal Logic}

\title{Game-Theoretic Semantics \\ for Alternating-Time Temporal Logic\footnote{This paper is a substantially extended and revised version of the conference paper \cite{GKR-AAMAS2016}.}
}

\date{}

\author{
  Valentin Goranko\footnote{\texttt{valentin.goranko@philosophy.su.se}}\\
  Stockholm University\\
  Sweden
  \and
  Antti Kuusisto\footnote{\texttt{kuusisto@uni-bremen.de}}\\
  University of Bremen\\
  Germany
  \and
  Raine R\"{o}nnholm\footnote{\texttt{raine.ronnholm@uta.fi}}\\
  University of Tampere\\
  Finland
}

\maketitle

\begin{abstract}
We introduce several versions of game-theoretic semantics (\GTS) for Alternating-Time Temporal Logic (\ATL). In \GTS, truth is defined in terms of existence of a winning strategy in a semantic evaluation game, and thus the game-theoretic perspective appears in the framework of \ATL on two semantic levels: on the object level in the standard semantics of the strategic operators, and on the meta-level where game-theoretic logical semantics is applied to \ATL. We unify these two perspectives into semantic evaluation games specially designed for \ATL.
The game-theoretic perspective enables us to identify new variants of the semantics of \ATL based on limiting the time resources available to the verifier and falsifier in the semantic evaluation game. We introduce and analyse an \emph{unbounded} and \emph{(ordinal) bounded} \GTS and prove these to be equivalent to the standard (Tarski-style) compositional semantics. We show that in bounded \GTS, truth of \ATL formulae can always be determined in finite time, i.e., without constructing infinite paths. We also introduce a non-equivalent \emph{finitely bounded} semantics and argue that it is natural from both logical and game-theoretic perspectives.
\end{abstract}

\section{Introduction}

\emph{Alternating-Time Temporal Logic} \ATL was introduced in \cite{AHK02} as a multi-agent extension of the branching-time temporal logic \CTL.  The semantics of \ATL is defined over  \emph{multi-agent transition systems}, also known as  \emph{concurrent game models}, in which agents take simultaneous actions at the current state and the resulting collective action
determines the subsequent transition to a successor state. 
The logic \ATL and its extension \ATLs have gradually become the most popular logical formalisms for reasoning about strategic abilities of agents in synchronous multi-agent systems.

\emph{Game-theoretic semantics} (\GTS) of logical languages has a rich 
history going back to Hintikka \cite{hinti}, Lorenzen \cite{lore} and others.
For an overview of the topic,
see \cite{HintikkaSandu97}.
%
%
In \GTS, truth of a logical formula $\varphi$ is determined in a
formal \emph{debate} between two players,
\emph{Eloise} and \emph{Abelard}. Eloise is trying to
verify $\varphi$, while Abelard is opposing her and  trying to falsify it. 
Each logical connective in the language is associated with a related rule in the game.
The framework of \GTS has turned out to be particularly
useful for the purpose of defining variants of semantic approaches to different logics.
For example, the \IF-logic of Hintikka and Sandu \cite{sandu} is an extension of first-order logic which was originally developed using \GTS.
Also, the game-theoretic approach to semantics has led to new methods for solving 
decision problems of logics, e.g., via
using parity games for the $\mu$-calculus. The close connection between   
between $\mu$-calculus model checking and parity games was first
discussed in \cite{jutlawhatever}.

%

Here we introduce game-theoretic semantics for \ATL.
In our framework, the rules corresponding to strategic operators involve
scenarios where Eloise and Abelard are both controlling (leading) 
coalitions of agents with opposing objectives.
The perspective offered by \GTS enables us to  
develop novel approaches to \ATL based on different time resources available  
to the players.
In \emph{unbounded} \GTS, a coalition trying to verify an  
until-formula is allowed to continue without a time limit, the price  
of an infinite play being a loss in the game. 
In \emph{bounded} \GTS, the coalition must commit to
\emph{finishing in finite time} by submitting an  
\emph{ordinal number} in the beginning of the game. That ordinal controls  
the available time resources in the game and \emph{guarantees a finite play}.   
Notably, even safety games (for always-formulae) are evaluated in  
finite time, and thus the bounded and unbounded
approaches to \GTS are conceptually very different.
However, despite the differences between the two semantics, we show that they are in fact equivalent to the standard compositional
(i.e., Tarski-style) semantics of \ATL, and therefore also to each other.
We also introduce a restricted variant of the bounded \GTS,
called \emph{finitely bounded} \GTS, where the ordinals controlling the time flow must always be finite. This is a particularly simple system of semantics where the
players will always announce the ultimate (always finite) duration of the game before  
the game begins. 
We show that the finitely bounded \GTS is equivalent to the  
standard \ATL semantics on \emph{image-finite models}, and therefore provides an
alternative approach to \ATL sufficient for most practical purposes.
It is worth noting here that the difference between the finitely bounded
and unbounded semantics is conceptually directly linked to the
difference between \emph{for-loops} and \emph{while-loops}. 
Since the finitely bounded semantics is new, we also develop an equivalent 
(over all models) Tarski-style semantics for it.

For all systems of game-theoretic semantics studied here, we show that
positional strategies suffice in the perfect information setting for \ATL.
In the framework of unbounded semantics, this means that strategies depend on the current state only.
In the case of bounded and finitely bounded semantics, strategies may additionally depend on the value of the ordinal guiding the time flow of the game.
As a by-product of our semantic investigations, we also introduce and
study a range of new game-theoretic
notions that are of interest also outside \ATL, 
such as, e.g., \emph{timed strategies}, \emph{$n$-canonical strategies}, 
\emph{$\infty$-canonical strategies}.

There are several related works concerning both \ATL and its extensions as well as game-theoretic semantics. We mention here some such papers.\footnote{We thank an anonymous reviewer for pointing out many of these references.}
\begin{itemize}

\item Imposing explicit time bounds on temporal operators in logical
specification languages has often been done
syntactically in the context of \emph{quantitative temporal reasoning}, see, e.g., \cite{EmersonMSS92}. Typically, that amounts, e.g., to replacing $\F q$ by a disjunction $q \lor \F q\lor  ... \lor \F^{k} q$, abbreviated by $\F^{\leq k} q$, for $k \in \bbN$, that imposes an explicit and uniform bound of $k$ steps for satisfying the eventuality $q$. 

\item Imposing time bounds in the semantics of temporal operators has also been proposed in several contexts, most notably in relation to \emph{finitary fairness} \cite{toplas/AlurH98} 
and \emph{promptness}
in LTL
\cite{Kupferman2007} as well as in omega-regular and parity games 
\cite{tocl/ChatterjeeHH09},
\cite{atva/AlmagorHK10},
\cite{lpar/MogaveroMS13},
\cite{aamas/AminofMMR16}. 
The idea of promptness (in LTL) is to replace the 
unbounded eventuality operator $\F$ by a ``bounded-eventually'' operator 
$\F_{\textbf{p}}$ for which an upper bound for the satisfaction of the eventuality is explicitly specified in the semantics.

\item  \GTS-like approaches have been used to solve decision problems of, e.g.,
fragments of Strategy Logic (especially with respect to the so-called ``behavioral semantics'') in  
\cite{aamas/MogaveroMS14},
\cite{aaai/CermakLM15}. 

\item The article \cite{mucalc} discusses an alternative game-theoretic semantics for
the modal $\mu$-calculus based on imposing time-bounds on the fixed-point operators.

\item In the recent work 
\cite{GorankoKR17}, we introduce and study the variant of CTL with finitely bounded semantics. 
We show that it is a proper sublogic (in terms of validities) of standard CTL. We also show that it lacks the finite model property, but its satisfiability problem is nevertheless decidable (shown via finitary tableaux methods) and still EXPTIME-complete. In \cite{FBS-ATL-2018}, we extend these results for \ATL and also provide an infinitary complete axiomatization for  ATL with finitely bounded semantics.
\end{itemize}


%
The main contributions of this paper are twofold: the development of 
game-theoretic semantics for \ATL and the introduction of new resource-sensitive versions of logics for multi-agent strategic reasoning. The latter relates conceptually to the study of other resource-bounded versions of \ATL, see, e.g., \cite{AlechinaLNR11}, \cite{MonicaNP11}, \cite{AlechinaBLN15}.
We note that some of our technical results could be obtained using alternative methods from coalgebraic modal logic. 
We discuss this issue in more detail in the concluding section~\ref{sec: Conclusion}.

The structure of the paper is as follows. 
After the preliminaries in Section \ref{sec: Preliminaries}, 
we develop the bounded and unbounded \GTS in Section \ref{sec:Game-theoretic semantics}. We analyse these frameworks in Section~\ref{sec:Analysing evaluation games}, where we show, inter alia, that the two game-theoretic systems, bounded and unbounded, are equivalent. In Section \ref{sec:Comparing game-theoretic and compositional semantics}, we compare the game-theoretic and
standard Tarski-style semantics and establish the above-mentioned
equivalences between them. After brief concluding remarks, we provide an appendix section \ref{sec:Appendix} with some technical proofs. 

%
%


\section{Preliminaries}
\label{sec: Preliminaries}

%
Here we define concurrent game models as well as the syntax and standard compositional semantics of \ATL. For detailed background on these, see 
\cite{AHK02} or \cite{GorDrim06}.

\begin{definition}
A \defstyle{concurrent game model} ($\mathit{\CGM}$)
$\mathcal{M}$ is a tuple
$$(\Agt, \St, \Prop, \Act, d, o, v)$$
%
%
%
which consists of the following non-empty sets:

-- \defstyle{agents} $\Agt = \{1,\dots,k\}$,

-- \defstyle{states} $\St$, 

-- \defstyle{proposition symbols} $\Prop$,

-- \defstyle{actions} $\Act$, 
\\
and the following functions: 
 
-- an \defstyle{action function} $d:\Agt\times\St\rightarrow(\mathcal{P}(\Act)\setminus\{\emptyset\})$, which assigns to each agent at each state a non-empty set of actions available to the agent at that state;   

-- a \defstyle{transition function} $o$, which assigns to each state $q\in\St$ and \defstyle{action profile} $\vec{\acta}$ at $q$ (where $\vec{\acta}$ is a tuple of actions $\vec{\acta} = (\alpha_1,\dots,\alpha_k)$ such that $\alpha_i\in d(i,q)$ for each $i\in\Agt$), 
a unique \defstyle{outcome state} $o(q, \vec{\acta})$;    

-- and a \defstyle{valuation function} $v:\Prop\rightarrow\mathcal{P}(\St)$. 
\end{definition}

Sets of agents $A\subseteq\Agt$ are also called \defstyle{coalitions}. The complement  $\overline{A}= \Agt\setminus A$ of a coalition $A$ is called the \defstyle{opposing coalition} (of $A$). 
We also define the set of action tuples that are available to a coalition $A$ at a state $q\in\St$: 
\[\avact(A,q):=\{(\alpha_i)_{i\in A}\mid \alpha_i\in d(i,q) \text{ for each } i\in A\}.\]

\begin{definition}
\label{def:CGM}
Let $\mathcal{M} = (\Agt, \St, \Prop, \Act, d, o, v)$ be a concurrent game model.
A \defstyle{(positional) strategy}\footnote{Unless otherwise specified, a `strategy' hereafter will mean a positional and deterministic strategy.} for an agent $a\in\Agt$ is a function $s_a: \St\rightarrow\Act$ such that $s_a(q)\in d(a,q)$ for each $q\in\St$. 
A \defstyle{collective strategy} $S_A$ for $A\subseteq\Agt$ is a tuple of individual strategies, one for each agent in $A$. 
A \defstyle{path}  in $\mathcal{M}$ is a sequence of states $\Lambda$ 
such that
$\Lambda[n\!+\!1] = o(\Lambda[n],\vec{\acta})$ for
some action profile
$\vec{\acta}$ for $\Lambda[n]$,
where $\Lambda[n]$ is the $n$-th state in $\Lambda$ $(n\in\mathbb{N})$. The function 
$\paths(q,S_A)$ returns the set of all paths generated when the agents in $A$ play according to $S_A$, beginning from the state $q$.
\end{definition}
%
%
%
%
%

\defstyle{Alternating-time temporal logic} \ATL,
introduced in \cite{AHK02}, is a logic, suitable for specifying and verifying qualitative objectives of players and coalitions in concurrent game models.
The main syntactic construct of \ATL is a formula of type $\coop{A} \varphi$, intuitively meaning
that 
\textit{the coalition $A$ has a collective strategy to guarantee the satisfaction of the objective $\varphi$ on every play enabled by that strategy.}
The syntax of \ATL is
defined as follows\footnote{The operator $\atlr$ (Release) was not part of the original syntax of \ATL but has been commonly added later.}
\[
	\atla::= p \mid \neg \atla \mid (\atla\vee \atla) \mid \coop{A} \atlx \atla 
	\mid \coop{A}\atla \atlu \atla \mid \coop{A}\atla \atlr \atla
\]
where $A\subseteq \Agt$ and $p\in\Prop$.
Other Boolean connectives are defined as usual,
and the combined operators $\coop{A} \atlf \atla$ and $\coop{A} \atlg \atla$ are defined respectively by $\coop{A}  \top \atlu \atla$ and $\coop{A} \bot \atlr \atla$. 
%
%
%
%
\begin{definition}\label{def: compositional semantics for ATL}
Let $\mathcal{M} = (\Agt, \St, \Prop, \Act, d, o, v)$ be a \CGM, $q\in\St$ a 
state
and $\varphi$ an \ATL-formula. Truth of $\varphi$ in $\mathcal{M}$ at $q$, denoted by $\mathcal{M},q\models\varphi$, is defined as follows:
\begin{itemize}[leftmargin=*]
\item $\mathcal{M},q\models p$  iff  $q\in v(p)$\ \ $\mathrm{(\text{for }}p\in\Prop\mathrm{)}$.
\item $\mathcal{M},q\models \neg\psi$ iff $\mathcal{M},q\not\models\psi$.
\item $\mathcal{M},q\models \psi\vee\theta$ iff $\mathcal{M},q\models\psi$ or $\mathcal{M},q\models\theta$.
\item $\mathcal{M},q\models\coop{A} \X\psi$ iff there exists a
collective strategy $S_A$ such that for each $\Lambda\in\paths(q,S_A)$, we have $\mathcal{M},\Lambda[1]\models\psi$.
\item $\mathcal{M},q\models\coop{A}\psi \U\theta$ iff there exists a
collective strategy $S_A$ such that for each $\Lambda\in\paths(q,S_A)$, there is $i\geq 0$ such that $\mathcal{M},\Lambda[i]\models\theta$ and $\mathcal{M},\Lambda[j]\models\psi$ \,for every $j < i$.
\item $\mathcal{M},q\models\coop{A}\psi \Rl\theta$ iff there exists a
collective strategy $S_A$ such that for each $\Lambda\in\paths(q,S_A)$ and $i\geq 0$, we have $\mathcal{M},\Lambda[i]\models\theta$ or there is $j < i$\, such that $\mathcal{M},\Lambda[j]\models\psi$.
\end{itemize}
\end{definition}
%

%
%
\section{Game-theoretic semantics}
\label{sec:Game-theoretic semantics}
%
%
%
In this section we introduce three versions of evaluation games for \ATL: 
\emph{unbounded, (ordinal) bounded, and
finitely bounded evaluation games}. 
These games will ultimately enable us to define three different versions of game-theoretic semantics for \ATL.

\subsection{Unbounded evaluation games}
Given a \CGM $\mathcal{M}$,  a state $\qzero$ and a formula $\varphi$, the
\defstyle{evaluation game} $\evalgame(\mathcal{M}, \qzero, \varphi)$ is, intuitively, a 
debate between two opponents, \defstyle{Eloise} ({\bf E})  and \defstyle{Abelard} ({\bf A}),
about whether the formula $\varphi$ is true at the state $\qzero$ in the model $\mathcal{M}$. Eloise claims that $\varphi$ is true, so she adopts (initially) the role of a \defstyle{verifier} in the game, and Abelard tries to prove the formula false, so he is (initially) the \defstyle{falsifier}. These roles can swap in the course of the game when negations are encountered in the formula to be evaluated.  Truth of an \ATL formula $\varphi$ in $\mathcal{M}$ at $q_{\mathit{in}}$
will be defined as existence of a winning strategy for $\bf{E}$ in  
the game $\evalgame(\mathcal{M}, \qzero, \varphi)$. 

%
%

We will often use the following notation: if $\pl\in\{{\bf A},{\bf E}\}$, 
then ${\opl}$ denotes the \defstyle{opponent} of ${\pl}$,    
i.e., ${\opl}\in \{{\bf A},{\bf E}\}\setminus\{\pl\}$.
\begin{definition}\label{def: Unbounded GTS}
Let $\mathcal{M} = (\Agt, \St, \Prop, \Act, d, o, v)$ be a \CGM, $\qzero\!\in\!\St$ and $\varphi$ an \ATL-formula. The \defstyle{unbounded evaluation~game} 
$\evalgame(\mathcal{M}, \qzero, \varphi)$ 
between the players \defstyle{\bf A} and \defstyle{\bf E}  is defined as follows.
%
\begin{itemize}[leftmargin=*]
\item A \defstyle{position} of the game is a tuple $\pos\!=\!({\pl},q,\psi)$ where 
${\pl}\in\{\text{\bf A},\text{\bf E}\}$, 
$q\in\St$ and $\psi$ is a subformula of $\varphi$. 
The \defstyle{initial position} of the game is $\pos_0:=(\text{\bf E},\qzero,\varphi)$.
\item 
In every position $({\pl},q,\psi)$, the player
\pl\ is called the \defstyle{verifier} 
and \opl\ the \defstyle{falsifier} for that position.
\item Each position of the game is associated with a  rule.
The rules for positions where the related formula is
either a proposition symbol or has a Boolean connective as its
main connective, are defined as follows.
\end{itemize}
\begin{enumerate}[leftmargin=*]
\item If $\pos_i=({\pl},q,p)$, where $p\in\Prop$, 
then $\pos_i$ is called an \defstyle{ending position} of the evaluation game.
If $q\in v(p)$, then ${\pl}$ wins the evaluation game. Else ${\opl}$ wins.
%
%
\item Let $\pos_i=({\pl},q,\neg\psi)$. 
The game then moves to the next position $\pos_{i+1}=({\opl},q,\psi)$.
\item Let $\pos_i=({\pl},q,\psi\vee\theta)$. 
Then the player ${\pl}$ decides whether the next position $\pos_{i+1}$ is $({\pl},q,\psi)$ or $\pos_{i+1}=({\pl},q,\theta)$.
\end{enumerate}
%
%

In order to deal with the strategic
operators, we now define a \defstyle{one step game},
denoted by $\step({\pl},A,q)$, where $A\subseteq\Agt$. This game consists of the following two actions.
\begin{enumerate}[leftmargin=30pt]
\item[i)] First ${\pl}$ chooses an action $\alpha_i\in d(i,q)$ for each $i\in A$. 
\item[ii)] Then ${\opl}$ chooses an action $\alpha_i\in d(i,q)$ for each $i\in\overline{A}$.
\end{enumerate}
The \defstyle{resulting state} of the one step game $\step({\pl},A,q)$ is the  state 
$$q':=o(q,\alpha_1,\dots,\alpha_k)$$ arising from the combined action of the agents.
We now use the one step game to define how the evaluation game proceeds from 
positions where the formula is of type $\coop{A} \X\psi$: 

%
\begin{enumerate}[leftmargin=*]
\item[(4)] Let $\pos_i=({\pl},q,\coop{A} \X\psi)$.
The next position $\pos_{i+1}$ is $({\pl},q',\psi)$, where $q'$ is the resulting state of $\step({\pl},A,q)$.
\end{enumerate}

The rules for the other strategic operators are obtained by iterating the one step game.
For this purpose, we now define the \defstyle{embedded game}
$$\mathbf{G} := \embgame(\vrf,\ctr,A,\initst,\psi_{\ctr},\psi_{\octr}),$$
where both $\vrf, \ctr \in \{{\bf E},{\bf A}\}$,
$A$ is a coalition, $\initst$ a state,
and $\psi_{\ctr}$ and $\psi_{\octr}$ are formulae.
The player $\vrf$ is called the \defstyle{verifier}
(of the embedded game) and $\ctr$ the \defstyle{controller}.
These players may, but need not be, the same.
We let $\ovrf$ and $\octr$  denote the opponents of $\vrf$ and $\ctr$, respectively.

The embedded game $\mathbf{G}$ starts from the \defstyle{initial state} $\initst$ and proceeds
from any state $q$ according to the following rules, applied in the order given below,
until an \defstyle{exit position} is reached.

\begin{enumerate}[leftmargin=25pt]
\item[i)]
$\ctr$  may end the game at the exit position $(\vrf,q,\psi_{\ctr})$.
\item[ii)] $\octr$
%
may end the game at the exit position $(\vrf,q,\psi_{\octr})$.
\item[iii)] If the game has not ended by the above rules, the one step game $\step(\vrf,A,q)$ is played to produce a resulting state $q'$. The embedded game is continued from the state $q'$. 
\end{enumerate}

If the \emph{embedded} game $\mathbf{G}$ continues an infinite number of rounds, the controller $\ctr$ loses the entire \emph{evaluation} game $\evalgame(\mathcal{M},\qzero,\varphi)$. 
Else, the evaluation game resumes from the exit position of the embedded game.

\medskip

We now define the rules of the evaluation game for the  remaining
strategic operators as follows:
\begin{enumerate}[leftmargin=*]
\item[(5)] Let $\pos_i=({\pl},q,\coop{A}\psi \U\theta)$. 
The next position $\pos_{i+1}$ is the exit position of the embedded game $\embgame({\pl},{\pl},A,q,\theta,\psi)$. (Note the order of the formulae $\theta$ and $\psi$.)

\item[(6)] Let $\pos_i=({\pl},q,\coop{A}\psi \Rl\theta)$. 
The next position $\pos_{i+1}$ is the exit position of the embedded game $\embgame({\pl},{\opl},A,q,\theta,\psi)$.
\end{enumerate}
This completes the definition of the evaluation game. 
\end{definition}

\begin{remark}\label{rem: evaluation games v.s. embedded games}
The evaluation games we have defined make use of special subgames---the embedded games. The reason for distinguishing these subgames from the evaluation games is purely technical and could be avoided. The distinction will help us organize and simplify proofs later on in the paper. The most natural way of thinking of the 
evaluation games and the embedded subgames is that they simply form a 
single game used for evaluating formulae of \ATL, and the separation of 
the embedded games matters only in our reasoning \emph{about} the evaluation games, not in \emph{using} these games for evaluating formulae.
\end{remark}

We can say that the embedded game for a formula $\coop{A}\psi \U\theta$ is an
\defstyle{eventuality game} and the embedded game for $\coop{A}\psi \Rl\theta$ a
\defstyle{safety game}.
%
%
%
%
%
%
Intuitively, the embedded game
$\embgame(\vrf,\ctr,A,q,\psi_{\ctr},\psi_{\octr})$
can be seen as a `simultaneous reachability game' where both players have a
goal they are trying to reach before the opponent reaches her/his goal.
The verifier $\vrf$
leads the coalition $A$ and the falsifier $\ovrf$
leads the opposing coalition $\overline A$.
The goal of each of $\vrf$ and $\ovrf$ is defined by a formula. 
When $\vrf = \ctr$, the goal of $\vrf$ is to \emph{verify} $\psi_{\ctr}$ and the goal of $\ovrf$ is to
\emph{falsify} $\psi_{\octr}$, where verifying $\psi_{\ctr}$ corresponds to 
reaching a state where $\psi_{\ctr}$ holds and falsifying $\psi_{\octr}$ corresponds to
reaching the complement of the set of states where $\psi_{\octr}$ holds.
When $\vrf=\octr$,
the goal of $\vrf$ is to verify $\psi_{\overline{\ctr}}$ and
that of $\ovrf$ is to falsify $\psi_{\ctr}$.
Both players $\vrf$ and $\ovrf$ have
the possibility to end the game when they believe that they have reached their goal. However,
the controller is responsible for ending the embedded game in finite time, and (s)he will lose if
the game continues infinitely long.
If both players reach their targets at the same time, the controller $\ctr$ has the priority to end the embedded game first.

Ending the embedded game can be seen as ``making a claim'' which would verify or falsify the temporal formula being evaluated in the embedded game. For example, in an embedded game for $\coop{A}\psi\U\theta$, the verifier $\vrf$ may claim at any state $q$ that $\theta$ is true at $q$ and thus $\psi\U\theta$ has been verified. The opponent $\ovrf$ can challenge this claim, and thus the claim is evaluated by continuing the evaluation game from the exit position $(\vrf,q,\theta)$. Similarly, $\ovrf$ may claim that $\psi$ \emph{is not true} at $q$ and thus $\psi\U\theta$ has been falsified. Then $\vrf$ must defend the truth of $\psi$ from the exit position $(\vrf,q,\psi)$.

It is worth noting that, even though the coalitions in \ATL operate concurrently, the verifier $\vrf$ in the embedded is forced to make the choices for her/his coalition in every round \emph{before} the falsifier $\ovrf$, and thus $\ovrf$ has the advantage of making her/his choices after $\vrf$ has made hers/his.
Therefore the evaluation games are fully \emph{turn-based} (which can be beneficial in some technical contexts). 
Let us consider how this is reflected in the standard compositional semantics of \ATL (Def~\ref{def: compositional semantics for ATL}).
If the formula $\coop{A}\Phi$ is true, then by compositional semantics $A$ has a collective strategy $S_A$ to enforce $\Phi$ regardless of the actions chosen by $\overline{A}$. Hence the strategy $S_A$ will work even if the the opposing coalition $\overline{A}$ could choose their actions after seeing the actions chosen by $A$. If $\coop{A}\Phi$ is \emph{not true}, then by compositional semantics such a strategy $S_A$ simply does not exist. However, in the embedded game, it is the ``responsibility'' of the falsifier $\ovrf$ to create a path, where $\Phi$ does not hold, by leading the opposing coalition $\overline{A}$ and being able to choose actions for it after seeing those chosen for $A$.

If we compare the intuitive properties of our evaluation game with those of the standard compositional semantics, we notice that the strategic abilities of coalitions $A\subseteq\Agt$ are analysed from a different perspective. The standard compositional semantics treats strategies explicitly by quantifying over collective strategies $S_A$. In contrast, the verifier and falsifier simply play the evaluation game step by step by controlling the coalitions $A$ and $\overline A$, respectively, and the verifier does not have to announce any strategy for $A$.
Still, in \GTS the strategies need to be treated explicitly, too, when we consider which of the players has a winning strategy in the evaluation game.

Lastly, in the compositional semantics, temporal formulae are always evaluated on infinite paths, while in the evaluation game a path is constructed only up to the extent needed for verifying (or falsifying) a formula. 
In the unbounded evaluation game, infinite paths may still be constructed, but only if the controller continues the embedded game for infinitely many rounds (and thus loses the game).

In the next subsection we define a \emph{bounded} version of the evaluation game in which always \emph{only finite} paths need to be constructed.

\subsection{Bounded evaluation games}
The difference between bounded and unbounded
evaluation games is that in the bounded case, embedded games
are associated with a \emph{time limit}. In the beginning of a bounded evaluation game,
the controller must announce some, possibly infinite, ordinal $\gamma$ which will decrease in each round. This will guarantee that the embedded game (and, in fact, the 
entire evaluation game) will end after a finite number of rounds.

Bounded evaluation games $\evalgame(\mathcal{M},\qzero,\varphi,\tlimbound)$
have an additional parameter $\tlimbound$, which is an ordinal
that fixes a strict upper bound for the ordinals
that the players can announce during the related embedded games.
As we will see further, different values of $\tlimbound$ give rise to different 
evaluation games and thus lead to different game-theoretic semantics.

\begin{definition}\label{def: Bounded evaluation game}
Let $\mathcal{M}$ be a \CGM, $\qzero\!\in\!\St$, $\varphi$ an \ATL-formula and $\tlimbound$ an ordinal. 
The \defstyle{bounded evaluation~game} 
(or $\tlimbound$-bounded evaluation game)
$\evalgame(\mathcal{M}, \qzero, \varphi,\tlimbound)$  is defined in 
the same way as the unbounded evaluation
game $\evalgame(\mathcal{M}, \qzero, \varphi)$, the 
only difference between the two games being the
treatment of until- and release-formulae. 
In the bounded case, these formulae  are treated as follows.

Let 
$
	\mathbf{G} = \embgame(\vrf,\ctr,A,\initst,\psi_{\ctr},\psi_{\octr})
$
be an embedded game that arises from a position $\pos$ in $\evalgame(\mathcal{M}, \qzero, \varphi)$.
In the same position $\pos$ in the bounded evaluation game $\evalgame(\mathcal{M}, \qzero, \varphi,\tlimbound)$, the player $\ctr$ first chooses some ordinal $\initlim<\tlimbound$
to be the \defstyle{initial time limit} for the embedded game $\mathbf{G}$.
This choice gives rise to a \defstyle{bounded embedded game} that is denoted by $\mathbf{G}[\initlim]$ and played in the way described below.

A  \defstyle{configuration} of $\mathbf{G}[\initlim]$ is a pair $(\tlim,q)$, where
$\tlim$ is a (possibly infinite) ordinal called the
\defstyle{current time limit} and $q\in \St$ a state
called the \defstyle{current state}. 
The bounded embedded game $\mathbf{G}[\initlim]$ starts from the \defstyle{initial
configuration} $(\initlim,\initst)$ and proceeds
from any configuration  $(\tlim,q)$ according to the following rules, applied in
the given order.
\begin{enumerate}[leftmargin=25pt]
\item[i)] If $\tlim=0$, the game ends at the exit position $(\vrf,q,\psi_{\ctr})$.
\item[ii)] $\ctr$ may end the game at the exit position $(\vrf,q,\psi_{\ctr})$.
\item[iii)] $\octr$ may end the game at the exit position $(\vrf,q,\psi_{\octr})$.
\item[iv)] If the game has not ended due to 
the previous rules, then $\step(\vrf,A,q)$ is played
in order to produce a resulting state $q'$.  
Then the bounded embedded game continues from the configuration $(\tlim',q')$,
where $\tlim'=\gamma\!-\!1$ if $\tlim$ is a successor ordinal,
and if $\tlim$ is a limit ordinal, then $\tlim'$ is an ordinal
smaller than $\tlim$ and chosen by \ctr.
\end{enumerate}

We denote the set of configurations in $\mathbf{G}[\initlim]$ by $\cnf_{\mathbf{G}[\initlim]}$. 
%
After the bounded embedded game $\mathbf{G}[\initlim]$ has reached an exit position---which it will, since ordinals are well-founded---the evaluation game resumes from the exit position of the embedded game.
\end{definition}

It is clear that bounded evaluation games end after a finite
number of rounds because bounded embedded games do.
Note, that if time limits are infinite ordinals,
they do not directly determine the number of rounds left in the game,
but instead they are related to the game duration in a more abstract way.
It is also worth noting here that different ways of using ordinals in game-theoretic considerations go way back. An example of an important and relatively early reference is \cite{cedric_1966} which contains references to even earlier related articles.
It is instructive to analyse embedded games independently of evaluation games. An embedded game of the form $\mathbf{G}=\embgame(\vrf,\ctr,A,\initst,\psi_{\ctr},\psi_{\octr})$ can be played without a time limit as in unbounded evaluation games, or it can be given some time limit $\initlim$ as a parameter, which leads to the related bounded embedded game $\mathbf{G}[\initlim]$.
When we use the plain notation $\mathbf{G}$ (as opposed to $\mathbf{G}[\gamma_0]$), we always assume that the embedded game $\mathbf{G}$ is not bounded. 
We sometimes emphasize this by calling $\mathbf{G}$ an \emph{unbounded} embedded game.
We will study the properties of embedded games in Section~\ref{sec:Analysing evaluation games}.
%
%

%
Let $\omega$ denote the smallest infinite ordinal.
Evaluation games of the form
$$\evalgame(\mathcal{M},\qzero,\varphi,\omega)$$
constitute a particularly interesting subclass of 
bounded evaluation games. 
We call the games in this class \defstyle{finitely bounded evaluation games.}
In these games, only \emph{finite} time
limits can be announced for bounded embedded games.

Note that in Definition~\ref{def: Bounded evaluation game} we could handle successor ordinals in the same way as limit ordinals. That is, the controller should choose any ordinal $\gamma'$ smaller than the current time limit $\gamma$ also when $\gamma$ is a successor ordinal. However, it easy to see that in this case the best choice for the controller is to always choose $\gamma'=\gamma-1$ and thus this step in the game would be unnecessary. Furthermore, with \emph{finitely} bounded evaluation games, it is natural that the time limit is always lowered by one after every transition in the embedded game.

\begin{remark}
In this paper the temporal operators $\coop{A}\F$ and $\coop{A}\G$ are regarded as syntactic abbreviations, and therefore the rules associated with them can be extracted from those  
above. 
We now define alternative rules that could be directly given to $\coop{A}\F$ and $\coop{A}\G$ in the \emph{finitely bounded} evaluation games, resulting in an equivalent semantics. (The fact that this equivalence holds will ultimately be straightforward to observe.)  

\begin{itemize}[leftmargin=*]
\item Let $\pos_i=({\pl},q,\coop{A} \F\psi)$. 
First the player ${\pl}$ chooses some $n\in\mathbb{N}$ and then the players iterate $\step({\pl},A,q)$ for at most $n$ times. The player ${\pl}$ may decide to stop at the current state $q'$ after any number $m\leq n$ of iterations, and then the evaluation game is continued from $\pos_{i+1}=({\pl},q',\psi)$.
\item Let $\pos_i=({\pl},q,\coop{A} \G\psi)$.
First the player ${\opl}$ chooses some $n\in\mathbb{N}$ and then the players iterate $\step({\pl},A,q)$ for at most $n$ times. The player ${\opl}$ may decide to stop at the current state $q'$ after any number $m\leq n$ of iterations, and
the evaluation game is then continued from $\pos_{i+1}=({\pl},q',\psi)$.
\end{itemize}

Similar (but not identical) rules could also be given to $\coop{A}\F$ and $\coop{A}\G$
in the frameworks based on unbounded and on ordinal-bounded games.
\end{remark}


\subsection{Game-theoretic semantics}

%
%
A strategy for a player $\pl\in\{\text{\bf A},\text{\bf E}\}$  will be defined below to be a function on game positions; in positions where the player $\pl$ is 
not required to make a move, the strategy of $\pl$ 
will output a special value ``$\void$", the meaning of which is `any possible value'. 
We will also use $\void$ for some other functions
when the output is not relevant, e.g., when defining a winning strategy, we may assign $\void$ for losing positions.
%
%

\begin{definition}\label{def: Strategies}
Let $\mathbf{G}=\embgame(\vrf,\ctr,A,\initst,\psi_{\ctr},\psi_{\octr})$ be an embedded
game and $\pl\in\{\text{\bf A},\text{\bf E}\}$.
\defstyle{A~strategy for the player ${\pl}$ in $\mathbf{G}$}
is a function $\sigma_{\pl}$ whose domain is $\St$ and whose
range is specified below.

Firstly, for any $q\in \St$, it is possible
to define $\sigma_{\pl}(q)\in\{\psi_{\ctr},\psi_{\octr}\}$; then 
$\sigma_{\pl}$ instructs $\pl$ to end the
game at the state $q$.
Here it is required that if ${\pl}=\ctr$,
then $\sigma_{\pl}(q) =\psi_{\ctr}$ and if ${\pl}=\octr$, then $\sigma_{\pl}(q)=\psi_{\octr}$.
(Intuitively, if $\sigma_{\pl}(q)=\psi\in\{\psi_{\ctr},\psi_{\octr}\}$, then the strategy $\sigma_\pl$ instructs $\pl$ to claim that $\psi$ is true at $q$.)

If $\sigma_{\pl}(q)\not\in\{\psi_{\ctr},\psi_{\octr}\}$, then the following conditions hold.
\begin{itemize}[leftmargin=*]
\item If ${\pl}=\vrf$, then $\sigma_{\pl}(q)$ is a tuple of actions in $\avact(A,q)$ (to be chosen for $A$).
\item If ${\pl}=\ovrf$, then $\sigma_{\pl}(q)$ is
defined to be a \defstyle{response function} $f:\avact(A,q)\rightarrow
\avact(\overline A,q)$ that assigns a tuple of
actions for $\overline A$ as a response to any tuple of actions chosen for $A$ (by the opposing player).
%
\end{itemize}

Let $\initlim$ be an ordinal. A strategy $\sigma_\pl$ for ${\pl}$ in $\mathbf{G}[\initlim]$ is defined in the same way as a strategy in $\mathbf{G}$, but the domain of
this strategy is the set of all possible configurations $\cnf_{\mathbf{G[\initlim]}}$. 
\end{definition}

Note that strategies in embedded games are positional, i.e., 
they depend only on the current state in the unbounded case and on the
current configuration in the bounded case.
We will see later that if strategies were allowed to depend on more
information, such as the sequence of  states played,
the resulting semantic systems would be equivalent to the current ones.

Any strategy $\sigma_\pl$ for an unbounded embedded game $\mathbf{G}$ can also be used in any bounded embedded game $\mathbf{G}[\initlim]$: we simply use the same action $\sigma_\pl(q)$ for each configuration $(\tlim,q)\in\cnf_{\mathbf{G[\initlim]}}$. 
In general, this does not work the other way around, since a strategy $\sigma_\pl$ that is defined for configurations may give different values for $(\tlim,q)$ and $(\tlim',q)$.
However, if a strategy $\sigma_\pl$ for a bounded embedded game $\mathbf{G}[\initlim]$ is \emph{independent} of time limits (and thus depends on states only), it can also be used in the unbounded embedded game $\mathbf{G}$.
This observation will be crucial later when we compare bounded embedded games with the corresponding unbounded embedded games.

We next define the notion of strategy for evaluation games, using strategies for embedded games as sub-strategies.
We first present the definition for \emph{unbounded} evaluation games.
\begin{definition}\label{def: Strategies in unbounded evaluation games}
Let ${\pl}\in\{\text{\bf A},\text{\bf E}\}$. A \defstyle{strategy for player ${\pl}$ in an
unbounded evaluation game $\evalgame = \evalgame(\mathcal{M}, \qzero, \varphi)$}
is a function $\strev_{{\pl}}$ defined on the set of positions $\setpos$ of
$\evalgame$ (with the range specified below) satisfying the following conditions.
\begin{enumerate}[leftmargin=*]
\item If $\pos=({\pl},q,\psi\vee\theta)$, then $\strev_{{\pl}}(\pos)\in\{\psi,\theta\}$.
\item If $\pos=({\pl},q,\coop{A} \X\psi)$, then $\strev_{{\pl}}(\pos)$ is a tuple of
actions in $\avact(A,q)$ for the one step game $\step({\pl},A,q)$. 
\item If $\pos=({\opl},q,\coop{A} \X\psi)$, then $\strev_{{\pl}}(\pos)$ is a
response function $f:\avact(A,q)\rightarrow\avact(\overline{A},q)$ for the one step game $\step({\pl},A,q)$. 
\item Let $\pos=({\pl},q,\coop{A}\psi \mathsf{T}\theta)$ or $\pos=({\opl},q,\coop{A}\psi \mathsf{T}\theta)$, where $\mathsf{T}\in\{\U\!,\!\Rl\}$. Then $\strev_{\pl}(\pos)$ is a strategy $\sigma_\pl$ for $\pl$ in the respective embedded game
$\embgame(\vrf,\ctr,A,q,\theta,\psi)$. 
\item In all other cases, $\strev_\pl(\pos)=\void$ (the player $\pl$ is not required to make a move).
\end{enumerate}

We say that the player ${\pl}$ plays according to
the strategy $\strev_{{\pl}}$ in the evaluation
game $\evalgame$ if ${\pl}$ makes her/his 
choices in $\evalgame$ according to that strategy.
We then say that $\strev_{{\pl}}$ is a
\defstyle{winning strategy} for ${\pl}$ in $\evalgame$ if ${\pl}$
wins all plays of $\evalgame$
where (s)he plays according to that strategy.
\end{definition}

We now define strategies for \emph{bounded} evaluation games. This definition differs from the previous one in positions that lead to an embedded game. Here one of the players must announce a time limit for the embedded game. The player announcing the time limit must also decide how to lower the time limit when a limit ordinal is reached: this will be done by a
substrategy, called a  \emph{timer}. 
Strategies of the other player player may depend on the announced time limits.
\begin{definition}[c.f. Remark~\ref{On genuine positionality}]\label{def: Strategies in bounded evaluation games}
A \defstyle{strategy for player ${\pl}$ in a bounded evaluation game $\evalgame =
\evalgame(\mathcal{M}, \qzero, \varphi,\tlimbound)$} is defined as in Definition~\ref{def:
Strategies in unbounded evaluation games},
with the exception of positions with until- and release-formulae, which are
treated as follows.
\begin{enumerate}[leftmargin=*]
\item[(4)] Let $\pos=({\pl},q,\coop{A}\psi \mathsf{T}\theta)$ or $\pos=({\opl},q,\coop{A}\psi \mathsf{T}\theta)$, where $\mathsf{T}\in\{\U,\!\Rl\}$,  and let 
$\mathbf{G}=\embgame(\vrf,\ctr,A,q,\theta,\psi)$
denote the embedded game related to $\pos$.

If ${\pl}=\ctr$, then $\strev_{{\pl}}(\pos)=(\initlim,t,\sigma_{\pl})$ such that 
the following conditions hold.
\begin{itemize}[leftmargin=*]
\item
$\initlim<\tlimbound$ is an ordinal. It is the choice for the
initial time limit and leads to the bounded embedded game $\mathbf{G}[\initlim]$.
\item
$t$ is a function, called \defstyle{timer}, on pairs $(\tlim,q)$, where $\tlim\leq\initlim$ is a limit
ordinal and $q\in\St$. The value $t(\tlim,q)$ must be an ordinal less than $\tlim$.
The timer $t$ gives an instruction
how to lower the time limit $\tlim$
\emph{after} a transition to $q$ has been made.
\item
$\sigma_{\pl}$ is a strategy for ${\pl}$ in $\mathbf{G}[\gamma_0]$.
\end{itemize}
Finally, if
${\pl}\not=\ctr$, then $\strev_{{\pl}}(\pos)$ is a function that maps any ordinal $\initlim<\tlimbound$ to some strategy $\sigma_{\pl,\initlim}$ for ${\pl}$ in $\mathbf{G}[\initlim]$.
\end{enumerate}
\end{definition}
In finitely bounded evaluation games, only finite time limits $\initlim<\omega$ may be announced by the controller $\mathbf{C}$.
Since no limit ordinal can be reached, the timer $t$ can be omitted from the strategy in this special case.

\begin{remark}
As discussed in Remark~\ref{rem: evaluation games v.s. embedded games}, instead of considering embedded games as separate subgames of the evaluation game, they can be  merged into the evaluation game. Thus when considering the full evaluation game (including the embedded games), it is not necessary to make a distinction between positions and configurations. In this approach we could only consider the strategy $\Sigma_\pl$ for the evaluation game instead of having substrategies $\sigma_\pl$ for related embedded games. Moreover, in the bounded case, the timer $t$ is given as a separate component together with $\sigma_\pl$, but it could perhaps be more natural to make it a part of $\sigma_\pl$.

However, this separation of evaluation games and embedded games, as well as the corresponding strategies, allows us to analyse the properties of embedded games in a modular fashion. The separation will also be very useful when comparing \GTS with the standard compositional semantics since the verifier's strategies $\sigma_\vrf$ (without the timer) will then be very closely related to the collective strategies $S_A$ (recall Definition~\ref{def: compositional semantics for ATL}).
\end{remark}

\begin{remark}\label{On genuine positionality}
Note that in Definition~\ref{def: Strategies in bounded evaluation games} we allow, for technical convenience, the strategies for an embedded game $\embgame(\vrf,\ctr,A,\initst,\psi_{\ctr},\psi_{\octr})[\initlim]$ to depend on all parameters of the game; in addition to the configurations $(q,\tlim)$ that can occur in the embedded game. Naturally the parameters $\vrf$, $\ctr$, $A$, $\psi_{\ctr}$ and $\psi_{\octr}$ are all crucial information to the players, but for ``genuinely positional'' strategies, the \emph{initial} state $\initst$ and the \emph{initial} time limit $\initlim$ should only be available to a player when the game is in the initial configuration $(\initst,\initlim)$. However, we can show that $\initst$ and $\initlim$ are indeed unnecessary bits of information for the players when the game is not in the initial configuration. This follows from Proposition~\ref{the: Canonical strategy} which we present later.
\end{remark}

Different choices for time limit bounds $\tlimbound$ give rise to different semantic systems,  and most results in the next section will be proven for an arbitrary choice of $\tlimbound$.
However, in this paper we mainly focus on the cases $\tlimbound=\omega$ 
and $\tlimbound=2^\kappa$, where $\kappa$ is the cardinality of the model. 
We will prove later that time limit bounds greater than $2^\kappa$
are not needed, 
and in finite models time limit bounds of the size $\kappa$ suffice.

\begin{definition}
\label{def:game-theoretic semantics} 
Let $\mathcal{M}$
 be a \CGM, $q\in\St$ and $\varphi$ an \ATL-formula. Let $\kappa$ be the cardinality of the model $\mathcal{M}$.
We define three different notions of truth of $\varphi$ in $\mathcal{M}$ and $q$, 
based on the three different evaluation games, thereby defining the
\defstyle{unbounded}, \defstyle{bounded} and \defstyle{finitely bounded semantics}, denoted respectively by $\models_{u}^{g}$, $\models_{b}^{g}$, and $\models_{f}^{g}$, as follows.
\begin{itemize}[leftmargin=*]
\item $\mathcal{M},q\models_{u}^{g}\varphi$
 \  iff \ {\bf E} has a winning strategy in $\evalgame(\mathcal{M}, q, \varphi)$. 
\item $\mathcal{M},q\models_{b}^{g}\varphi$
 \ iff \ {\bf E} has a winning strategy in $\evalgame(\mathcal{M}, q, \varphi,2^\kappa)$. 
\item $\mathcal{M},q\models_{f}^{g}\varphi$
 \ iff \ {\bf E} has a winning strategy in $\evalgame(\mathcal{M}, q, \varphi,\omega)$. 
\end{itemize}

We also write more generally that
$\mathcal{M},q\models_{\tlimbound}^{g}\!\varphi$
  iff  {\bf E} has a winning strategy in $\evalgame(\mathcal{M}, q, \varphi,\tlimbound)$.
This is called \defstyle{$\tlimbound$-bounded semantics}.
\end{definition}

Note that in the \emph{bounded} \GTS the time limit bound $\tlimbound$ is fixed based on the given model. On the other hand, $\tlimbound$-bounded \GTS has a fixed value of $\tlimbound$ that is used for every model (this kind of semantics may also be called \emph{uniformly bounded}.)

We will prove that both  the bounded and the unbounded semantics are 
equivalent to the standard compositional semantics 
of Definition \ref{def: compositional semantics for ATL}.
The finitely bounded semantics, on the other hand, will turn out non-equivalent to these, but  equivalent to a natural variant of the compositional semantics, introduced in Section \ref{sec:Comparing game-theoretic and compositional semantics}.
We will also see that each $\tlimbound$-bounded \GTS (for an arbitrary \emph{fixed} value of $\tlimbound$) is non-equivalent to the standard compositional semantics.
The following example shows that the finitely bounded \GTS differs from both the unbounded and bounded cases. In particular, the fixed point characterisation of the temporal operator $\F$, viz. 
$\coop{A} \F p \equiv p \lor \coop{A} \X\coop{A} \F p$, fails with finitely bounded \GTS, as seen by the example.
\begin{example}\label{ex: bounded vs. finitely bounded}
Consider the \CGM  $\mathcal{M}\!=\!(\{a\},\{q_0\}\cup\mathbb{N}
\times\mathbb{N},\{p\},\mathbb{N},d,o, v)$ in Figure~1, 
where
$v(p) = \{(i,i)\mid i\in\mathbb{N}\}$, 
$ d(a,q_0)=\mathbb{N}$, $d(a,(i,j))=\{0\}$, 
$o(q_0,i)=(i,0)$ and $o((i,j),0)=(i,j\!+\!1)$.

In this model
$\mathcal{M},q_0\not\models_f^g \coop{\emptyset} \F p$, because for every time limit $n<\omega$ chosen by Eloise, Abelard
may select the action $n$ in the first round for the agent $a$, so it 
will take $n\!+\!1$ rounds to reach a state where $p$ is true. 
On the other hand, $\mathcal{M},q_0\models_f^g \coop{\emptyset} \X\coop{\emptyset} \F p$ and therefore $\mathcal{M},q_0\models_f^g p \lor \coop{\emptyset} \X\coop{\emptyset} \F p$, because, after the first step the game
will be at a state $(i,0)$ for some $i\in\mathbb{N}$, whence Eloise can choose
any time limit $n\geq i$ and reach a state where $p$ is true before time runs out.

However, $\mathcal{M},q_0\models_b^g \coop{\emptyset} \F p$, since Eloise can choose $\omega$ as the time limit in the beginning of the game and then lower it to $i<\omega$ when the next state $(i,0)$ is reached. Also, $\mathcal{M},q_0\models_u^g \coop{\emptyset} \F p$ since a state where $p$ is true will always be reached in a finite number of steps. 

Despite these observations, we will show that the three semantics become equivalent over image-finite models.

\begin{figure}[h]
\begin{center}
\begin{tikzpicture}
	[
	place/.style={text width=0.5cm, align=flush center,
	circle,draw=black!100,fill=black!0,thick,inner sep=2pt,minimum size=4mm},scale=0.87]
	\node at (-2.1,2.8) {$\mathcal{M}$:};
	\node at (-1.5,0) (00) [place] {$\neg p$};
	\node at (-2.5,0)  {$ q_0$};
	\node at (1,4) (41) [place] {$\neg p$};
	\node at (1,2) (31) [place] {$\neg p$};
	\node at (1,0) (21) [place] {$\neg p$};
	\node at (1,-2) (11) [place,fill=red!50] {$ p$};
	\node at (4,-2) (19) {$\cdots$};  		
	\draw [-latex,thick] (11) to node {} (19);	
	%
	\draw [-latex,thick] (00) to node [below] {$ 0$\;\;} (11);
	\draw [-latex,thick] (00) to node [above] {$ 1$\;\;} (21);
	\draw [-latex,thick] (00) to node [above] {$ 2$\;\;} (31);
	\draw [-latex,thick] (00) to node [above] {$ 3$\;\;} (41);
	\draw [dashed,thick] (-0.35,3) to (-0.35,4.2);
	\node at (2.5,4) (42) [place] {$ \neg p$};
	\node at (2.5,2) (32) [place] {$\neg p$};
	\node at (2.5,0) (22) [place,fill=red!50] {$ p$};
	\node at (5.5,0) (29) {$\cdots$};  		
	\draw [-latex,thick] (22) to node {} (29);	
	\node at (2.5,-2) (12) [place] {$\neg p$};
	\draw [-latex,thick] (11) to node [above] {$ 0$} (12);
	\draw [-latex,thick] (21) to node [above] {$ 0$} (22);
	\draw [-latex,thick] (31) to node [above] {$ 0$} (32);
	\draw [-latex,thick] (41) to node [above] {$ 0$} (42);
	\node at (4,4)  	(43) [place] {$ \neg p$};
	\node at (4,2)       (33) [place,fill=red!50] {$ p$}; 
	\node at (7,2) (39) {$\cdots$};  		
	\draw [-latex,thick] (33) to node {} (39);	
	\node at (4,0) 		(23) [place] {$\neg p$};
	\draw [-latex,thick] (22) to node [above] {$ 0$} (23);
	\draw [-latex,thick] (32) to node [above] {$ 0$} (33);
	\draw [-latex,thick] (42) to node [above] {$ 0$} (43);
	\node at (5.5,4) (44) [place,fill=red!50] {$ p$};
	\node at (5.5,2) (34) [place] {$\neg p$};
	\node at (8.5,4) (49) {$\cdots$};  		
	\draw [-latex,thick] (44) to node {} (49);	
	\draw [-latex,thick] (33) to node [above] {$ 0$} (34);
	\draw [-latex,thick] (43) to node [above] {$ 0$} (44);
	\node at (7,4) (45) [place] {$ \neg p$};
	\draw [-latex,thick] (44) to node [above] {$ 0$} (45);
\end{tikzpicture}
\label{fig2}
\caption{$\coop{A} \F p \equiv p \lor \coop{A} \X\coop{A} \F p$ 
fails in the finitely bounded \GTS.} 
\end{center}
\end{figure}
\end{example}
%

%
\section{Analysing embedded games}
\label{sec:Analysing evaluation games}
%

In this section we will examine the properties of different versions of embedded games that occur as parts of evaluation games. We associate with each state a \emph{winning time label} which describes how good that state is for the players. The optimal labels will be used to define a \emph{canonical strategy} which will be a winning strategy whenever there exists one. 
With these notions we shall prove positional determinacy of the embedded games. We will also show that if the players are allowed to announce sufficiently large ordinals as time limits, then bounded embedded games become essentially equivalent to corresponding unbounded embedded games.
Furthermore, we shall analyse how the sizes of the needed
ordinals depend on the \CGM in which the game is played.

%
\subsection{Winning time labels}

Different values of the time limit bound $\tlimbound$ correspond to different classes of bounded embedded games $\mathbf{G}[\initlim]$ where $\initlim<\tlimbound$. In this section we use a fixed (unless otherwise specified) value of $\tlimbound$ and will assume that all bounded embedded games are part of some evaluation game $\evalgame(\mathcal{M},\qzero,\varphi,\tlimbound)$. 
Since $\tlimbound$ could have any ordinal value, our results will hold, in particular, for both $2^\kappa$-bounded and finitely bounded semantics.

If $\mathbf{G}=\embgame(\vrf,\ctr,A,\initst,\psi_{\ctr},\psi_{\octr})$ is an embedded game and $q\in\St$, we write $\mathbf{G}[q]:=\embgame(\vrf,\ctr,A,q,\psi_{\ctr},\psi_{\octr})$. We also use the abbreviation $\mathbf{G}[q,\tlim]:=(\mathbf{G}[q])[\tlim]$. This notation is useful because, by the recursive nature of bounded embedded games, any configuration $(\tlim,q)$ of
$\mathbf{G}[\initlim]$ (where $\initlim<\tlimbound$) is the initial configuration of $\mathbf{G}[q,\tlim]$. 
%
%

%
We next define winning strategies for embedded games. Here ``winning an embedded game'' intuitively means having a winning strategy in the \emph{evaluation game} that continues from the exit position of the embedded game.

\begin{definition}\label{def: Winning strategies in embedded game}
%
Let
$\mathbf{G}=\embgame(\vrf,\ctr,A,\initst,\psi_{\ctr},\psi_{\octr})$
be an embedded game. 

\begin{enumerate}[leftmargin=*]
\item We say that $\sigma_{{\pl}}$ is a \defstyle{winning strategy for the player $\pl$ in {\bf G}} if the following conditions hold:
\begin{enumerate}
\item[a)] infinite plays are possible with $\sigma_{\pl}$ only if ${\pl}\neq\ctr$.
\item[b)] the equivalence $\mathcal{M},q\models_u^g\atlb\Leftrightarrow{\pl} = \vrf$
holds for all exit positions $(\vrf,q,\psi)$ of $\mathbf{G}$ that
can be reached when ${\pl}$ plays using $\sigma_{\pl}$ . 
\end{enumerate}
\item Let $\initlim<\tlimbound$.
\begin{itemize}[leftmargin=*]
\item Suppose that $\pl=\ctr$. We say that the pair $(\sigma_{{\pl}},t)$ is a \defstyle{timed winning strategy for ${\pl}$ in $\mathbf{G}[\initlim]$} if the equivalence $\mathcal{M},q\models_\tlimbound^g\atlb\Leftrightarrow {\pl} = \vrf$ holds for all exit positions $(\vrf,q,\atlb)$ that can be encountered when ${\pl}$ plays using the strategy $\sigma_{\pl}$ and the timer $t$.
\item Suppose that $\pl\neq\ctr$. We say that $\sigma_{{\pl}}$ is a \defstyle{winning strategy for $\pl$ in $\mathbf{G}[\initlim]$} if the equivalence $\mathcal{M},q\models_\tlimbound^g\atlb\Leftrightarrow {\pl} = \vrf$ holds for all exit positions $(\vrf,q,\atlb)$ that  can occur when ${\pl}$ plays using $\sigma_{\pl}$.
\end{itemize}
\end{enumerate}
If the unbounded (respectively, bounded) embedded game ends in a position
where the equivalence $\mathcal{M},q\models_u^g\atlb
\Leftrightarrow {\pl} = \vrf$ 
(respectively, $\mathcal{M},q\models_\tlimbound^g\atlb
\Leftrightarrow {\pl} = \vrf$) holds,
we say that ${\pl}$ \defstyle{wins} the embedded game. 
In the unbounded case, $\octr$ also wins in the case where the play is infinite.
(Note that it now follows that, in every play of the embedded game, exactly one of the players wins.)
\end{definition}
We next define for an embedded game $\mathbf{G}=\embgame(\vrf,\ctr,A,\initst,\psi_{\ctr},\psi_{\octr})$ so called \defstyle{winning time labels}, $\mathcal{L}_{\pl}(q)$, for each $q\in\St$. The labels will indicate how good the state $q$ is for the player ${\pl}$ when different bounded embedded games $\mathbf{G}[q,\initlim]$ are played with different time limits $\initlim<\tlimbound$. If the label is ``\win"\ or ``\lose", then the state is a winning 
(respectively, losing) state for ${\pl}$, regardless of the time limit $\initlim$. 
If the label is an ordinal $\tlim<\tlimbound$, it means that $\tlim$ is the ``critical time limit'' for winning or losing the game. More precisely, if ${\pl}=\ctr$, $\tlim$ is the least time limit needed for $\pl$ to win from $q$, and if ${\pl}\neq\ctr$, then $\tlim$ is the least time limit such that ${\pl}$ can no longer guarantee that (s)he will not lose the game from $q$.

From now on we will often consider
separately the cases where the player ${\pl}$ is the
controlling player $\ctr$ and where her/his opponent ${\opl}$ is the
controlling player.
The former case correspond to the
situation where ${\pl}$ is the verifier in an
eventuality game or the falsifier in a safety game.
The latter case means that
${\pl}$ is the verifier in a safety game or is the
falsifier in an eventuality game.

\begin{definition}\label{def: Winning time labels}
Let $\mathbf{G} = 
\embgame(\vrf,\ctr,A,\initst,\psi_{\ctr},\psi_{\octr})$
be an embedded game and ${\pl}\in\{{\bf A},{\bf E}\}$.
The \defstyle{winning time label
 $\mathcal{L}_{\pl}(q)$ for ${\pl}$ in $\mathbf{G}$ at state $q\in\St$} is
defined as follows.

\medskip

\noindent
\textbf{Case 1}. Suppose  ${\pl}=\ctr$. Let $\sigma_{\pl}$ be a strategy for ${\pl}$. We
first define a \defstyle{strategy label} $l(q,\sigma_{\pl})$ as follows.
\begin{itemize}[leftmargin=*]
\item Set $l(q,\sigma_{\pl}):=\lose$ if $(\sigma_{\pl},t)$ is
not a timed winning strategy in $\mathbf{G}[q,\tlim]$
for any timer $t$ and $\tlim<\tlimbound$.
\item Else, set $l(q,\sigma_{\pl}) :=\tlim$, where $\tlim<\tlimbound$ is the least time limit for which there is a timer t such that $(\sigma_{\pl},t)$ is a timed winning strategy in $\mathbf{G}[q,\tlim]$.
\end{itemize}
When there exists at least one strategy $\sigma_{\pl}$ for $\pl$ such that $l(q,\sigma_{\pl})\neq \lose$, we define $\mathcal{L}_{\pl}(q)$ as the least ordinal value of strategy labels $l(q,\sigma_{\pl})$.
Else, we define $\mathcal{L}_{\pl}(q):=\lose$.

\medskip

\noindent
\textbf{Case 2}. Suppose ${\pl}\neq\ctr$. Let $\sigma_{\pl}$ be a strategy for ${\pl}$. We define $l(q,\sigma_{\pl})$ as follows.

\begin{itemize}[leftmargin=*]
\item 
Set $l(q,\sigma_{\pl}):=\win$ if $\sigma_{\pl}$ is a timed winning strategy in $\mathbf{G}[q,\tlim]$ for every time limit $\tlim<\tlimbound$. 
\item Else, set $l(q,\sigma_{\pl}):=\tlim$, where $\tlim<\tlimbound$ is the least time limit such that $\sigma_{\pl}$ is not a winning strategy in $\mathbf{G}[q,\tlim]$.
\end{itemize}
If $l(q,\sigma_{\pl})=\win$ for some $\sigma_{\pl}$, then set $\mathcal{L}_{\pl}(q):=\win$.
Else, set $\mathcal{L}_{\pl}(q)$ to be the least upper bound for the values $l(q,\sigma_{\pl})$.
\end{definition}
The following lemma shows that if the controller has a timed winning strategy within  some 
time limit, then (s)he has a timed winning strategy which is winning for all greater time limits as well.
This claim may seem obvious, but needs to be proven nevertheless, since there is no guarantee that a winning strategy for some smaller time limit would make good choices also at configurations with larger time limits. 
\begin{lemma}\label{the: Regular strategies}
Let 
$\mathbf{G}=\embgame(\vrf,\ctr,A,\initst,\psi_{\ctr},\psi_{\octr})$
be an embedded game. We assume that $\pl=\ctr$ and that $\pl$ has a timed winning strategy $(\sigma_\pl,t)$ in $\mathbf{G}[\initlim]$ for some $\initlim<\tlimbound$.
Then there is a pair $(\sigma_\pl',t')$ which is a timed winning strategy in $\mathbf{G}[\tlim]$ for any time limit $\tlim$ such that $\initlim\leq\tlim<\tlimbound$. 
\end{lemma}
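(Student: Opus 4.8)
The plan is to prove a strong, uniform monotonicity statement: a single positional pair $(\sigma_\pl',t')$ that wins $\mathbf{G}[\tlim]$ from $(\tlim,\initst)$ for \emph{every} $\tlim$ with $\initlim\le\tlim<\tlimbound$. The key device is to replace the clock carried by the given strategy with a \emph{state-indexed rank} and then to play ``at rank'' irrespective of how much time is actually left. For each $q\in\St$ I would define
\[
 r(q):=\min\{\delta<\tlimbound : (\sigma_\pl,t)\text{ is a timed winning strategy in }\mathbf{G}[q,\delta]\},
\]
leaving $r(q)$ undefined when no such $\delta$ exists. Since $(\sigma_\pl,t)$ wins $\mathbf{G}[\initlim]=\mathbf{G}[\initst,\initlim]$ we have $r(\initst)\le\initlim$, and since strategies are positional, a configuration $(\eta,q')$ reached under $(\sigma_\pl,t)$ is indistinguishable from the initial configuration of $\mathbf{G}[q',\eta]$, so winning transfers to all such subgames; this is what makes $r$ well behaved. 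The whole point of $r$ is that it depends on the \emph{state only}, which is exactly what a positional strategy may consult.

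I would then define $(\sigma_\pl',t')$ as follows. At a configuration $(\tlim,q)$: if $r(q)=0$ or $\sigma_\pl(r(q),q)=\psi_{\ctr}$, then $\sigma_\pl'$ ends the game at the exit position $(\vrf,q,\psi_{\ctr})$ (using rule ii, available to $\ctr$); otherwise $\sigma_\pl(r(q),q)$ is a tuple in $\avact(A,q)$, and $\sigma_\pl'$ plays it. For a limit ordinal $\tlim$ and successor state $q'$, I set the timer $t'(\tlim,q'):=r(q')$. Both the action and the timer value are functions of the state alone, so $(\sigma_\pl',t')$ is positional, and the same pair serves as a (strategy, timer) for every initial budget $\tlim$.

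The correctness rests on the invariant $\tlim\ge r(q)$ maintained along every play of $\mathbf{G}[\tlim]$; it holds initially because $\tlim\ge\initlim\ge r(\initst)$. As long as $\sigma_\pl'$ plays $\step$-moves dictated by $\sigma_\pl(r(q),q)$, the sequence of visited states is exactly a play of $(\sigma_\pl,t)$ in the shadow game $\mathbf{G}[q,r(q)]$; since that strategy is winning, every exit position actually produced, whether the controller ends, the opponent ends (rule iii, equally available in the shadow), or the clock reaches $0$, is a good exit, i.e.\ satisfies $\mathcal{M},q\models_\tlimbound^g\psi\Leftrightarrow\pl=\vrf$. Infinite plays are impossible because $\mathbf{G}[\tlim]$ is bounded. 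To preserve the invariant after a $\step$ to $q'$, note that the shadow continuation reaches some time $\eta$ with $r(q')\le\eta<r(q)$ (here $\eta=r(q)-1$ if $r(q)$ is a successor and $\eta=t(r(q),q')$ if $r(q)$ is a limit). At a successor $\tlim$ the clock drops to $\tlim-1\ge r(q')$, since $\tlim\ge r(q)\ge\eta+1\ge r(q')+1$; at a limit $\tlim$ the timer resets it to $r(q')$, which is legal because $r(q')<r(q)\le\tlim$. In both cases the invariant is restored, and the argument also shows $r(q')$ is defined, so we never consult $r$ off its domain.

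The step I expect to be the main obstacle is precisely the one flagged in the remark preceding the lemma: a strategy tailored to budget $\initlim$ may make arbitrary (losing) choices at configurations carrying larger clocks, so one can neither reuse $\sigma_\pl$ verbatim for $\tlim>\initlim$ nor simply ``burn'' the surplus time, because burning time drives the state to places $\sigma_\pl$ was never shown to control. The resolution is the state rank $r$: by always querying $\sigma_\pl$ at the \emph{minimal} winning budget $r(q)$ for the current state, and exploiting the freedom at limit ordinals to reset the clock down to $r(q')$, the state trajectory is forced to trace a genuine winning shadow play while the real clock only ever exceeds what is needed. Everything else is the ordinal bookkeeping of the previous paragraph, carefully separating the successor and limit cases for both $r(q)$ and $\tlim$.
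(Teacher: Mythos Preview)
Your proposal is correct and follows essentially the same approach as the paper: define the rank $r(q)$ as the least budget for which $(\sigma_\pl,t)$ wins $\mathbf{G}[q,\cdot]$, let the new strategy play $\sigma_\pl(r(q),q)$ and let the new timer reset to $r(q')$, then verify correctness via the invariant $\tlim\ge r(q)$ (the paper phrases the same verification as a transfinite induction on $\tlim$). A small bonus of your presentation is that it makes explicit that $\sigma_\pl'$ depends only on the state, whereas the paper's $\sigma_\pl'(\tlim,q)$ is nominally configuration-dependent; otherwise the two arguments coincide.
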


\begin{proof}
The idea here is that we define the timed strategy $(\sigma_\pl',t')$ at a configuration $(\tlim,q)$ by using the choices given by $(\sigma_\pl,t)$ for the smallest $\tlim'\leq\tlim$ such that $(\sigma_\pl,t)$ is a winning strategy for $\mathbf{G}[q,\tlim']$.
See the appendix for a detailed proof.
\end{proof}

The following proposition relates
values of winning time labels to durations of
embedded games and existence of winning strategies.

\begin{proposition}
\label{the: Winning time labels}
Let $\mathbf{G}=\embgame(\vrf,\ctr,A,\initst,\psi_{\ctr},\psi_{\octr})$
be an embedded game, 
${\pl}\in\{{\bf A},{\bf E}\}$ and $q\in\St$.

\begin{enumerate}[leftmargin=*]
\item
If ${\pl}=\ctr$, then:  

i) $\mathcal{L}_{\pl}(q)=\tlim<\tlimbound$ iff there is a pair $(\sigma_\pl,t)$ that is a timed winning strategy in $\mathbf{G}[q,\tlim']$ for all $\tlim'$ s.t. $\tlim\leq\tlim'<\tlimbound$, but there is no timed winning strategy for $\pl$ in $\mathbf{G}[q,\tlim']$ for any $\tlim'<\tlim$.

ii)  $\mathcal{L}_{\pl}(q)=\lose$ iff there is no timed winning strategy $(\sigma_\pl,t)$ for ${\pl}$ in $\mathbf{G}[q,\tlim]$ for any $\tlim<\tlimbound$.

\item
If ${\pl}\neq\ctr$, then:  

i) $\mathcal{L}_{\pl}(q)=\tlim<\tlimbound$ iff
for every $\tlim'<\tlim$, there is some $\sigma_{\pl}$ which is a winning strategy for $\pl$ in $\mathbf{G}[q,\tlim']$, but there is no winning strategy for ${\pl}$ in $\mathbf{G}[q,\tlim']$ for any $\tlim'$ s.t. $\tlim\leq\tlim'<\tlimbound$.

ii) $\mathcal{L}_{\pl}(q)=\win$ iff there is a strategy $\sigma_\pl$ which is a winning strategy in $\mathbf{G}[q,\tlim]$ for every $\tlim<\tlimbound$.
\end{enumerate}
\end{proposition}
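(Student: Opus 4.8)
The plan is to prove Proposition~\ref{the: Winning time labels} by carefully unwinding Definition~\ref{def: Winning time labels} in both cases, translating the statements about $\mathcal{L}_\pl(q)$ into statements about the strategy labels $l(q,\sigma_\pl)$, and then invoking Lemma~\ref{the: Regular strategies} to upgrade ``winning at a single time limit'' to ``winning at all larger time limits.'' First I would treat Case 1, where $\pl=\ctr$. By definition, $\mathcal{L}_\pl(q)=\tlim<\tlimbound$ means $\tlim$ is the least ordinal value attained by $l(q,\sigma_\pl)$ over all strategies $\sigma_\pl$. Unfolding the definition of $l(q,\sigma_\pl)$, this says that there is a strategy $\sigma_\pl$ and timer $t$ with $(\sigma_\pl,t)$ a timed winning strategy in $\mathbf{G}[q,\tlim]$, and no strategy wins in $\mathbf{G}[q,\tlim']$ for any $\tlim'<\tlim$. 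The forward direction of (1)(i) then follows: I take the witnessing $\sigma_\pl$ with $l(q,\sigma_\pl)=\tlim$ and apply Lemma~\ref{the: Regular strategies} to obtain $(\sigma_\pl',t')$ that is timed winning in $\mathbf{G}[q,\tlim']$ for \emph{every} $\tlim'$ with $\tlim\leq\tlim'<\tlimbound$; the nonexistence part for $\tlim'<\tlim$ is immediate from minimality of $\tlim$. Conversely, if such a pair exists and no strategy wins below $\tlim$, then the least ordinal strategy label is exactly $\tlim$, giving $\mathcal{L}_\pl(q)=\tlim$. Part (1)(ii) is the degenerate case: $\mathcal{L}_\pl(q)=\lose$ iff \emph{every} strategy has label $\lose$, which by definition of $l$ means no $(\sigma_\pl,t)$ wins $\mathbf{G}[q,\tlim]$ for any $\tlim<\tlimbound$.

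Next I would treat Case 2, where $\pl\neq\ctr$. Here the controller is the opponent, so infinite plays favour $\pl$ and the structure is dual. By definition $l(q,\sigma_\pl)=\win$ exactly when $\sigma_\pl$ wins $\mathbf{G}[q,\tlim]$ for \emph{all} $\tlim<\tlimbound$, and otherwise $l(q,\sigma_\pl)$ is the least $\tlim$ at which $\sigma_\pl$ fails. Part (2)(ii) is then essentially a restatement: $\mathcal{L}_\pl(q)=\win$ iff some $\sigma_\pl$ has $l(q,\sigma_\pl)=\win$, i.e. wins at every time limit. For (2)(i), $\mathcal{L}_\pl(q)=\tlim<\tlimbound$ means $\tlim$ is the least upper bound of the values $l(q,\sigma_\pl)$ and no label equals $\win$. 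I must argue two things. For any $\tlim'<\tlim$: since $\tlim$ is the \emph{supremum} of the failure-thresholds, there is some $\sigma_\pl$ whose label exceeds $\tlim'$, and a strategy that first fails at a threshold $>\tlim'$ is winning at $\tlim'$, as desired. For $\tlim\leq\tlim'<\tlimbound$: every label is $\leq\tlim\leq\tlim'$ (and no label is $\win$), so every strategy fails at some threshold $\leq\tlim'$; the monotonicity observation that a strategy failing at $\tlim''$ also fails at all larger time limits then shows no strategy wins at $\tlim'$.

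The conceptual subtlety I expect to be the main obstacle is the asymmetry between ``least'' in Case 1 and ``least upper bound'' in Case 2, and ensuring the supremum in Case 2 is handled correctly at limit ordinals. Concretely, in Case 2 the value $\mathcal{L}_\pl(q)=\tlim$ might arise as a supremum that is \emph{not} attained by any single strategy label, so I cannot simply pick one $\sigma_\pl$ that witnesses survival at all $\tlim'<\tlim$ simultaneously; instead I must, for each fixed $\tlim'<\tlim$, separately produce a $\sigma_\pl$ surviving past $\tlim'$. I would therefore make explicit the two monotonicity facts I am relying on: for $\pl\neq\ctr$, a strategy that is winning in $\mathbf{G}[q,\tlim']$ is also winning in $\mathbf{G}[q,\tlim'']$ for every $\tlim''<\tlim'$ (fewer rounds can only help the side favoured by infinite play), and dually that failure propagates upward. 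These facts, which follow from the structure of the bounded embedded game and the fact that decreasing the time limit only tightens the controller's obligation, are what make the threshold description of $l(q,\sigma_\pl)$ coherent and are precisely the ingredients that let me convert between the pointwise failure thresholds and the aggregate labels $\mathcal{L}_\pl(q)$.
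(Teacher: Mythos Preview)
Your approach matches the paper's: both unwind Definition~\ref{def: Winning time labels} to reduce statements about $\mathcal{L}_\pl(q)$ to statements about the strategy labels $l(q,\sigma_\pl)$, invoke Lemma~\ref{the: Regular strategies} for Case~1, and handle Case~2 via the supremum of strategy labels (splitting into the attained and non-attained cases at limit ordinals). You are in fact slightly more explicit than the paper in isolating the downward monotonicity for $\pl\neq\ctr$, which the paper uses implicitly when asserting that a strategy winning at some $\tlim'\geq\tlim$ must satisfy $l(q,\sigma_\pl')>\tlim$.
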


\begin{proof}
The proof follows quite directly from the definition of winning time labels. We also need to use Lemma~\ref{the: Regular strategies} when proving the case (1), i).
More details are given in the appendix. 
\end{proof}
Winning time labels $\mathcal{L}_\pl(q)$ of an embedded game
are either ordinals less than the time limit bound $\tlimbound$, or else, 
labels $\win$, $\lose$. 
If we increased the value of $\tlimbound$ to some $\tlimbound'>\tlimbound$
and considered the values of winning time labels of the corresponding embedded game within the evaluation game $\evalgame(\mathcal{M},\qzero,\varphi,\tlimbound')$, then some of the labels that originally were $\win$ or $\lose$, could now
obtain ordinal values $\tlim$ such that $\tlimbound\leq\tlim<\tlimbound'$.
Other kinds of changes of labels would also be possible because the ``truth sets'' of the goal formulae $\psi_{\ctr}$ and $\psi_{\octr}$ could change; see the following example.

\begin{example}
Consider the \CGM $\mathcal{M}\!=\!(\{a\},\{q_i\mid i\in \mathbb{N}\},\{p\},\{0\},d,o, v)$, where $d(q_i)=\{0\}$ for each $i$, $o(q_i,0)=q_{i+1}$ for each $i\in\mathbb{N}$ and $v(p)=\{q_3\}$ (see Figure~2). Let 
\[
	\psi:=\coop{\emptyset}\atlf p.
\]
We consider the embedded game related to $\psi$ with the time limit bounds $3$ and $4$. When $\tlimbound=3$, Eloise has the following winning time labels: the state $q_0$ has the label $\lose$, $q_1$ the label $2$, $q_2$ the label $1$, $q_3$ the label $0$ and all the other states have the label $\lose$. When $\tlimbound$ is increased to $4$, then the label of $q_0$ changes from $\lose$ to the value $3$, but the other labels remain the same. 

Let then
\[
	\varphi:=\coop{\emptyset}\atlf\psi \quad (=\coop{\emptyset}\atlf\coop{\emptyset}\atlf p).
\]
We then consider the embedded game related to $\varphi$ with the time limit bounds $3$ and $4$. When $\tlimbound=3$, Eloise's winning time labels are as follows: the state $q_0$ has the label $1$, the states $q_1$, $q_2$ and $q_3$ have the label $0$ and all the other states have the label $\lose$. When $\tlimbound$ is increased to $4$, then the label of $q_0$ is lowered from $1$ to $0$, but all the other labels remain the same.
\begin{figure}[h]
\begin{center}
\begin{tikzpicture}
	[place/.style={text width=0.5cm, align=flush center,
	circle,draw=black!100,fill=black!0,thick,inner sep=2pt,minimum size=4mm},scale=0.87]
	\node at (-0.5,0.5) (q0) {$q_0$};
	\node at (1.5,0.5) (q1) {$q_1$};
	\node at (3.5,0.5) (q2) {$q_2$};
	\node at (5.5,0.5) (q3) {$q_3$};
	\node at (7.5,0.5) (q4) {$q_4$};
	\node at (0,0) (1) [place] {$\neg p$};
	\node at (2,0) (2) [place] {$\neg p$};
	\node at (4,0) (3) [place] {$\neg p$};	
	\node at (6,0) (4) [place,fill=red!50] {$ p$};	
	\node at (8,0) (5) [place] {$\neg p$};	
	\node at (10,0) (6) {\dots};
	\draw [-latex,thick] (1) to node[below] {0} (2);	
	\draw [-latex,thick] (2) to node[below] {0} (3);
	\draw [-latex,thick] (3) to node[below] {0} (4);
	\draw [-latex,thick] (4) to node[below] {0} (5);
	\draw [-latex,thick] (5) to node[below] {0} (6);
\end{tikzpicture}
\label{fig 3}
\caption{An example of a game in which increasing the time limit bound lowers some winning time labels.} 
\end{center}
\vspace{-0,5cm}
\end{figure}
\end{example}

However, if all ordinal 
valued labels stay strictly below $\tlimbound$ in
all embedded games when going from $\Gamma$ to $\Gamma'$,
then each label in fact remains the same in the transition.
This is shown by the following proposition.

\begin{proposition}\label{the: stability of labels}
Let $\tlimbound,\tlimbound'>0$ be ordinals such that $\tlimbound<\tlimbound'$. Consider bounded evaluation games $\mathcal{G}_\tlimbound=\mathcal{G}(\mathcal{M},\qzero,\varphi,\tlimbound)$ and $\mathcal{G}_{\tlimbound'}=\mathcal{G}(\mathcal{M},\qzero,\varphi,\tlimbound')$. 
Suppose that all the ordinal valued winning time labels, for the embedded games of $\mathcal{G}_{\tlimbound'}$, are strictly smaller than $\tlimbound$. Then players have exactly the same winning time labels for the embedded games of $\mathcal{G}_{\tlimbound'}$ as as they have for the embedded games of $\mathcal{G}_{\tlimbound}$.
\end{proposition}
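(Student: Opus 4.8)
The plan is to prove the stated label-equality \emph{simultaneously} with the equivalence of the two truth relations $\models_\tlimbound^g$ and $\models_{\tlimbound'}^g$ on every subformula of $\varphi$, by a single structural induction that processes the subformulas from the innermost outwards. The two statements are genuinely intertwined: by Definitions~\ref{def: Winning strategies in embedded game} and~\ref{def: Winning time labels}, the winning time labels of an embedded game $\mathbf{G}=\embgame(\vrf,\ctr,A,\initst,\theta,\psi)$ depend on the model only through (a) its transition structure, which is independent of the bound, and (b) the winning status of the exit positions $(\vrf,q,\theta)$ and $(\vrf,q,\psi)$, which is governed by the truth of the \emph{proper} subformulas $\theta,\psi$ under the bound of the ambient evaluation game. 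So comparing labels across $\tlimbound$ and $\tlimbound'$ first requires the truth relations to agree on $\theta,\psi$; conversely, truth of $\coop{A}\psi\U\theta$ (or $\coop{A}\psi\Rl\theta$) is read off from the labels of its embedded game, so label-equality feeds truth-equality one level up.

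Concretely, the induction hypothesis for a subformula $\chi$ is: (I) $\mathcal{M},q\models_\tlimbound^g\chi\Leftrightarrow\mathcal{M},q\models_{\tlimbound'}^g\chi$ for all $q$; and (II) if $\chi$ is an until- or release-formula, the winning time labels $\mathcal{L}_\pl(q)$ of its embedded game agree whether computed inside $\evalgame(\mathcal{M},\qzero,\varphi,\tlimbound)$ or inside $\evalgame(\mathcal{M},\qzero,\varphi,\tlimbound')$. The base case ($\chi$ a proposition) and the Boolean cases are immediate, since the rules for $p$, $\neg$ and $\vee$ involve no time limits, so (I) propagates verbatim from the immediate subformulas. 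The heart of the argument is (II) for $\chi=\coop{A}\psi\U\theta$ or $\coop{A}\psi\Rl\theta$, assuming (I) for $\theta,\psi$. By (I) for the goal formulas, the winning condition of Definition~\ref{def: Winning strategies in embedded game} at every exit position is the same whether read with $\models_\tlimbound^g$ or $\models_{\tlimbound'}^g$; moreover, for each ordinal $\tlim<\tlimbound$ the bounded embedded game $\mathbf{G}[q,\tlim]$ is literally the \emph{same} game object in both evaluation games, and a (timed) strategy in it never references ordinals $\geq\tlimbound$. Hence ``being a (timed) winning strategy in $\mathbf{G}[q,\tlim]$'' means exactly the same thing in the two games whenever $\tlim<\tlimbound$.

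Now I invoke the hypothesis: in $\mathcal{G}_{\tlimbound'}$ every ordinal-valued label of $\pl$ is $<\tlimbound$. Using the characterisations of Proposition~\ref{the: Winning time labels} (whose case (1)(i) is underpinned by Lemma~\ref{the: Regular strategies}), I argue case by case. If $\mathcal{L}_\pl^{\tlimbound'}(q)=\tlim<\tlimbound$ with $\pl=\ctr$, then some $(\sigma,t)$ is timed winning for all $\tlim\leq\tlim'<\tlimbound'$ and none exists for $\tlim'<\tlim$; restricting the quantifiers to $\tlim'<\tlimbound$ (legitimate since the relevant games and winning notions are unchanged) yields $\mathcal{L}_\pl^{\tlimbound}(q)=\tlim$. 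A label $\lose$ (for $\pl=\ctr$) or $\win$ (for $\pl\neq\ctr$) transfers automatically downward, since its defining ``for no/every $\tlim'<\tlimbound'$'' condition only strengthens when restricted to $\tlim'<\tlimbound$; and an ordinal label with $\pl\neq\ctr$ is handled dually. The hypothesis is used exactly to forbid an ordinal label from escaping into $[\tlimbound,\tlimbound')$, which is the only way restriction could alter it. This gives (II). Finally (I) for $\chi$ follows from (II): by the until/release rules and Definition~\ref{def:game-theoretic semantics}, $\mathcal{M},q\models_\tlimbound^g\coop{A}\psi\U\theta$ holds iff $\mathbf{E}$, as controller of $\embgame(\mathbf{E},\mathbf{E},A,q,\theta,\psi)$, has a timed winning strategy for some limit, i.e.\ iff $\mathcal{L}_{\mathbf{E}}(q)\neq\lose$; dually $\mathcal{M},q\models_\tlimbound^g\coop{A}\psi\Rl\theta$ iff $\mathcal{L}_{\mathbf{E}}(q)=\win$ in $\embgame(\mathbf{E},\mathbf{A},A,q,\theta,\psi)$. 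Since (II) equates the labels, truth agrees, closing the induction.

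The main obstacle is the bookkeeping of players. The hypothesis bounds the labels of the fixed $\pl$, whereas truth of a strategic formula is governed by the labels of $\mathbf{E}$. To close this gap I use that the embedded games here are determined---$\pl$ wins $\mathbf{G}[q,\tlim]$ precisely when $\opl$ does not---which by Proposition~\ref{the: Winning time labels} forces $\mathcal{L}_\pl(q)$ and $\mathcal{L}_{\opl}(q)$ to coincide as ordinals and to satisfy ``$\lose$-for-$\ctr$ iff $\win$-for-the-other''. Consequently the hypothesis for $\pl$ transfers to $\opl$, and in particular to $\mathbf{E}$, so the engine above applies to both players. The only other delicate point is the literal identification of the sub-$\tlimbound$ games across the two bounds, which is what permits reusing strategies and timers unchanged; here the positionality of strategies (so that initial and non-initial configurations are indistinguishable) is exactly what is needed.
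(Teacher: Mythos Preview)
Your proposal is correct and follows the same inductive skeleton as the paper: work from the innermost embedded games outward, show that exit-position winning conditions coincide across the two bounds, then conclude that the labels computed by Definition~\ref{def: Winning time labels} coincide, using the hypothesis only to block labels from escaping into $[\tlimbound,\tlimbound')$.

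The one genuine difference is your detour through truth. You carry the pair (I)~truth equivalence and (II)~label equality through the induction, and since truth is about Eloise while the hypothesis is about the fixed $\pl$, you then invoke determinacy of the bounded embedded games (equivalently, the correspondence $\mathcal{L}_\ctr(q)=\mathcal{L}_{\octr}(q)$) to transfer the bound to both players. The paper avoids this: it never mentions truth but argues directly that each exit position is a \emph{winning position for $\pl$} in $\mathcal{G}_\tlimbound$ iff in $\mathcal{G}_{\tlimbound'}$. This works because $\pl$'s own labels already determine $\pl$'s winning at every deeper embedded-game entry via Proposition~\ref{the: Winning time labels} (case~1 when $\pl$ is controller, case~2 when not), so $\opl$'s labels are never needed. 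Your route is logically fine---determinacy (Proposition~\ref{the: Determinacy of the bounded embedded game}) and the label correspondence (Proposition~\ref{the: Correspondence of the winning time labels}) are proved later in the paper but do not rely on the present proposition, so there is no circularity---it is just slightly less economical than the paper's direct argument.
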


\begin{proof}
\textit{(Sketch)} 
Since winning time labels determine the time limits needed for winning, there is no need for the players to announce any higher ordinals than the winning time labels. Therefore, by the assumption that all winning time labels of the embedded games within $\mathcal{G}_{\tlimbound'}$ are below $\tlimbound$, we can show that $\tlimbound'$-bounded \GTS becomes equivalent with $\tlimbound$-bounded \GTS. From this it follows that, for any embedded game, the same winning time labels are constructed with respect to both $\tlimbound$ and $\tlimbound'$.
For more details, see the proof in the appendix.
\end{proof}

We say that an ordinal $\tlimbound$ is \defstyle{stable} for an
embedded game $\mathbf{G}$ if the winning time labels of $\mathbf{G}$
cannot be altered by increasing the time limit bound from $\Gamma$ to any higher ordinal.
We say that $\Gamma$ is \defstyle{globally stable} for a \CGM
$\mathcal{M}$ if $\Gamma$ is stable for all bounded embedded games within
all evaluation games $\evalgame(\mathcal{M},\qzero,\varphi,\tlimbound)$.
By Proposition~\ref{the: stability of labels}, it is equivalent to say that $\tlimbound$ is globally stable for $\mathcal{M}$ if, in the evaluation games for $\mathcal{M}$, we cannot create any labels with the ordinal value $\gamma\geq\tlimbound$ by increasing the value of the time limit bound $\tlimbound$ to any $\tlimbound'>\tlimbound$.

We will see later that 
there exists a globally stable time limit bound for every concurrent game model. 
When $\tlimbound$ is globally stable, its role is not so relevant any more, 
since players would not benefit from the ability to choose arbitrarily high time limits.
However, we always need some time limit bound in order to avoid strategies
becoming proper classes.

\subsection{Canonical strategies for embedded games}

Here we define so-called \emph{canonical strategies}. They are 
guaranteed to be winning strategies whenever a winning strategy exists.
In the following definition we first consider the case where the player $\pl$ is the controller $\ctr$.

\begin{definition}\label{def: Canonical strategy of the controller}
Let $\mathbf{G}=\embgame(\vrf,\ctr,A,\initst,\psi_{\ctr},\psi_{\octr})$
be an embedded game,
let $\pl\in\{{\bf A},{\bf E}\}$, and assume that $\pl = \ctr$.
We define a \defstyle{canonical strategy $\tau_{{\pl}}$} for ${\pl}$ in $\mathbf{G}$, for each $q\in\St$, as follows.

\begin{itemize}[leftmargin=*]
\item If $\mathcal{L}_{\pl}(q)=\tlim$, then $\tau_{{\pl}}(q)=\sigma_{\pl}(\tlim,q)$ for some strategy $\sigma_{\pl}$ for which there is a timer $t$ such that
$(\sigma_\pl,t)$ is a timed winning strategy in $\mathbf{G}[q,\tlim']$ for all $\tlim'$ such that $\tlim\leq\tlim'<\tlimbound$. 
(Note that such a strategy exists by Proposition~\ref{the: Winning time labels}).

\item If $\mathcal{L}_{\pl}(q)=\lose$, then we put $\tau_{{\pl}}(q) = \void$.
\end{itemize}

We define the \defstyle{canonical timer $t_{can}$} for ${\pl}$ in $\mathbf{G}$ for any pair $(\tlim,q)$ (where $\tlim<\tlimbound$ is a limit ordinal and $q\in\St$) as follows.
\begin{itemize}[leftmargin=*]
\item if $\mathcal{L}_{\pl}(q)\neq \lose$
and $\mathcal{L}_{\pl}(q)<\tlim$, then $t_{can}(\tlim,q)=\mathcal{L}_{\pl}(q)$.
\item otherwise $t_{can}(\tlim,q)=\void$.
\end{itemize}
We call the pair $(\tau_\pl,t_{can})$ a \defstyle{canonically timed strategy}
(for the controller).

\end{definition}

Note that $\tau_{{\pl}}$ is not necessarily unique
since we may have to choose one from several strategies.
However, these choices are all equally good for our purposes. 
Note also that the canonical strategy $\tau_\pl$ depends only on states and can thus be used in both unbounded and bounded embedded games. 
We will see that if $\ctr$ has a timed winning strategy in $\mathbf{G}[\initlim]$ for some $\initlim<\tlimbound$, then $\ctr$ wins $\mathbf{G}[\initlim]$ with $(\tau_\ctr,t_{can})$.
The canonical strategy of $\ctr$ can also be seen as optimal for winning the game as fast as possible.
This is because the canonical strategy always follows actions given by a strategy that is a winning strategy with the lowest possible time limit.

In the next definition we consider different types of canonical strategies
for the case where the player $\pl$ is not the controller $\ctr$.

\begin{definition}\label{def: Canonical strategy of the non-controller}
Let 
$\mathbf{G}=\embgame(\vrf,\ctr,A,\initst,\psi_{\ctr},\psi_{\octr})$
be an embedded game, let $\pl\in\{{\bf A},{\bf E}\}$ and 
assume that ${\pl}\neq\ctr$. 
We define a \defstyle{canonical strategy} $\tau_{{\pl}}$ for $\pl$ in $\mathbf{G}[\initlim]$ for all $\gamma_0 <\Gamma$, at every configuration $(\tlim,q)$ where $\tlim<\tlimbound$ and $q\in\St$, as follows.

\begin{itemize}[leftmargin=*]
\item If $\mathcal{L}_{\pl}(q)=\win$, then $\tau_{{\pl}}(\tlim,q)=\sigma_{\pl}(\tlim,q)$ for some $\sigma_{\pl}$ that is a winning strategy in $\mathbf{G}[q,\tlim]$ for every $\tlim<\tlimbound$ (such a strategy exists by Proposition \ref{the: Winning time labels}).

\item Else, if $\mathcal{L}_{\pl}(q)=\tlim'$ and $\tlim'>\tlim$, then $\tau_{{\pl}}(\tlim,q)=\sigma_{\pl}(\tlim,q)$ for some $\sigma_{\pl}$ that is a winning strategy in $\mathbf{G}[q,\tlim]$
(such a strategy exists by Proposition \ref{the: Winning time labels}). 

\item Otherwise we define $\tau_{\pl}(\tlim,q)=\void$.
\end{itemize}

We also define, for every $n<\omega$, the \defstyle{$n$-canonical strategy} $\tau_\pl^n$ for $\pl$ in $\mathbf{G}$, for each $q\in\St$, as follows.
\begin{itemize}[leftmargin=*]
\item If $\mathcal{L}_\pl(q)\geq\omega$ or $\mathcal{L}_\pl(q)=\win$, then $\tau_{\pl}^n(q)=\tau_{\pl}(n,q)$. 

\item Else, if $\mathcal{L}_{\pl}(q)=m>0$, then $\tau_{\pl}^n(q)=\sigma_{\pl}(m\!-\!1,q)$ for some $\sigma_\pl$ that is a winning strategy in $\mathbf{G}[q,m-1]$. (Such a strategy exists by Proposition \ref{the: Winning time labels}).

\item Otherwise $\tau_{\pl}^n(q)=\void$.
\end{itemize}

Finally, when $\tlimbound$ is a successor ordinal, we define the \defstyle{$\infty$-canonical strategy} $\tau_\pl^\infty$ for $\pl$ in~$\mathbf{G}$, for each $q\in\St$, as follows.
\begin{itemize}[leftmargin=*]
\item If $\mathcal{L}_{\pl}(q)=\win$, then $\tau_{\pl}^\infty(q)=\tau_{\pl}(\tlimbound\!-\!1,q)$. 

\item otherwise $\tau_{\pl}^\infty(q)=\void$.
\end{itemize}
%
\end{definition}

When $\pl\neq\ctr$, the canonical strategy $\tau_\pl$ depends on time limits, and thus it cannot be used in unbounded embedded games. However, both $n$-canonical and $\infty$-canonical strategies depend only on states.
We fix the notation such that hereafter $\tau_\pl$, $\tau_\pl^n$ and $\tau_\pl^\infty$ will always denote canonical strategies (of the respective type) for the player $\pl$.

As we will see, canonical strategies $\tau_{\octr}$ for $\octr$ are optimal in a sense that if $\octr$ has a winning strategy in $\mathbf{G}[\initlim]$ for some $\initlim<\tlimbound$, then $\octr$ wins $\mathbf{G}[\initlim]$ with $\tau_{\octr}$. We will see later that $n$-canonical strategies are important in the finitely bounded evaluation games and $\infty$-canonical strategies in evaluation games with sufficiently large time limit bounds $\tlimbound$.
Intuitively, the $\infty$-canonical strategy always assumes that the
highest possible time limit $\Gamma - 1$ has been set for every position. Thus, 
when $\octr$ uses an $\infty$-canonical strategy, (s)he plays as
carefully as possible, always assuming the `worst' possible time limit $\Gamma - 1$.
The $n$-canonical strategy behaves in a similar way, but under the assumption that the initial time limit $\tlim_0$ was set to at most $n$.

The following definition will be useful in our proofs later on.

\begin{definition}
Let $\mathbf{G}=\embgame(\vrf,\ctr,A,\initst,\psi_{\ctr},\psi_{\octr})$ be an
embedded game.
%
%
Let $\sigma_\pl$ be a strategy in $\mathbf{G}[\initlim]$ $(\initlim<\tlimbound)$.
Consider a configuration $(\tlim,q)$ and suppose that $\sigma_{\pl}(\tlim,q)$ is either a tuple of actions for $A$ or some response function for $\overline{A}$.
We say that set $Q\subseteq\St$ is \defstyle{forced} by $\sigma_\pl(\tlim,q)$ if 
for each $q'\in\St$, it holds that 
$q'\in Q$ if and only if there is some play with $\sigma_\pl$ from $(\tlim,q)$
such that the next configuration is $(\tlim',q')$ for some $\tlim'$.
We use the same terminology for the set forced by $\sigma_\pl(q)$ when  $\sigma_\pl$ depends only on states.
\end{definition}

The following proposition shows that the canonical strategy (with the canonical timer) is
guaranteed to be a (timed) winning strategy always when such a strategy exists.
\begin{proposition}
\label{the: Canonical strategy}
Let  $\mathbf{G}=\embgame(\vrf,\ctr,A,\initst,\psi_{\ctr},\psi_{\octr})$
be an embedded game, $\pl\in\{{\bf A},{\bf E}\}$ and $\initlim<\tlimbound$.
\begin{enumerate}[leftmargin=*]
\item Suppose that $\pl=\ctr$. If ${\pl}$ has any timed winning strategy $(\sigma_\pl,t)$ in $\mathbf{G}[\initlim]$, then $(\tau_\pl,t_{can})$ is a timed winning strategy for $\pl$ in $\mathbf{G}[\initlim]$.
\item Suppose that $\pl\neq\ctr$. If ${\pl}$ has any winning strategy $\sigma_\pl$ in $\mathbf{G}[\initlim]$, then $\tau_{\pl}$ is a winning strategy for $\pl$ in $\mathbf{G}[\initlim]$.
\end{enumerate}
\end{proposition}

\begin{proof}
The proof follows quite routinely from Definitions \ref{def: Canonical strategy of the controller} and \ref{def: Canonical strategy of the non-controller}. 
See details in the Appendix. 
\end{proof}

Recall Remark~\ref{On genuine positionality} on ``genuinely positional'' strategies. Since canonically timed strategies and canonical strategies depend on neither the initial state $\initst$ nor the initial time limit $\initlim$, it follows that if a player has a winning strategy in an embedded game, then (s)he has a genuinely positional winning strategy. Also the converse clearly holds:  if (s)he has a genuinely positional winning strategy, she has a standard winning strategy.
By the first claim of the previous proposition, we see that it suffices to
consider those strategies of player $\ctr$ which are
independent of time limits in configurations. 
The following lemma
shows that the same holds for 
the player $\octr$ in bounded embedded games
with a \emph{finite} time limit.
The key here will be the use of $n$-canonical strategies which only depend on states.
%

%
\begin{lemma}\label{the: n-canonical strategy}
Let  $\mathbf{G}=\embgame(\vrf,\ctr,A,\initst,\psi_{\ctr},\psi_{\octr})$
be an embedded game, let $\pl\in\{{\bf A},{\bf E}\}$ and assume that $\pl\neq\ctr$.
For any $n<\omega$, if ${\pl}$ has a winning strategy $\sigma_\pl$ in $\mathbf{G}[m]$ for some $m\leq n$, then $\tau_\pl^n$ is a winning strategy in $\mathbf{G}[m]$.
\end{lemma}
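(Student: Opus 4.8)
The plan is to reduce the statement to a slightly stronger claim proved by induction on the (finite) current time limit. Reading $\win$ as larger than every ordinal, I would prove the following.

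\medskip
\noindent\emph{Claim.} For every configuration $(\tlim,q)$ with $\tlim\leq n$ and $\tlim<\mathcal{L}_{\pl}(q)$, the player $\pl$ wins the bounded embedded game $\mathbf{G}[q,\tlim]$ when (s)he plays according to $\tau_\pl^n$.
\medskip

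\noindent The lemma then follows immediately: if $\pl$ has a winning strategy in $\mathbf{G}[m]=\mathbf{G}[\initst,m]$ for some $m\leq n$, then by Proposition~\ref{the: Winning time labels}(2) the existence of such a strategy forces $m<\mathcal{L}_\pl(\initst)$; since also $m\leq n$, the Claim applied to $(m,\initst)$ gives that $\tau_\pl^n$ wins $\mathbf{G}[\initst,m]$.

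Before the induction I would record two preparatory facts. First, since $\pl\neq\ctr$ we have $\pl=\octr$, so in each configuration the opponent $\ctr$ owns rules (i)--(ii) while $\pl$ owns rule (iii). Second, whenever $\tlim<\mathcal{L}_\pl(q)$ (so in particular $\mathcal{L}_\pl(q)\neq 0$, hence $\tau_\pl^n(q)\neq\void$), unwinding Definition~\ref{def: Canonical strategy of the non-controller} shows that $\tau_\pl^n(q)$ is exactly the initial value $\sigma(\tlim^\dagger,q)$ of some strategy $\sigma$ that is a winning strategy in $\mathbf{G}[q,\tlim^\dagger]$ for an ordinal $\tlim^\dagger$ with $\tlim\leq\tlim^\dagger$: namely $\tlim^\dagger=n$ when $\mathcal{L}_\pl(q)=\win$ or $\mathcal{L}_\pl(q)\geq\omega$, and $\tlim^\dagger=\mathcal{L}_\pl(q)-1$ when $\mathcal{L}_\pl(q)$ is finite (existence of $\sigma$ is guaranteed by Proposition~\ref{the: Winning time labels}). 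In the base case $\tlim=0$, rule (i) ends the game at once at $(\vrf,q,\psi_{\ctr})$; as $\mathcal{L}_\pl(q)>0$, the player $\pl$ has a winning strategy in $\mathbf{G}[q,0]$, whose only play is precisely this exit, so $\mathcal{M},q\models_{\tlimbound}^{g}\psi_{\ctr}\Leftrightarrow\pl=\vrf$ and $\pl$ wins.

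For the inductive step $\tlim\geq 1$ I would follow the rule order. If the opponent $\ctr$ ends at rule (ii), the exit is $(\vrf,q,\psi_{\ctr})$, which is a win for $\pl$ exactly as in the base case (again because $\mathcal{L}_\pl(q)>0$). Otherwise $\pl$ consults $\tau_\pl^n(q)=\sigma(\tlim^\dagger,q)$. If this value is the ending decision $\psi_{\octr}$, then in $\mathbf{G}[q,\tlim^\dagger]$ (recall $\tlim^\dagger\geq\tlim\geq 1$) there is a play consistent with $\sigma$ in which the opponent declines rule (ii) and $\pl$ ends at $(\vrf,q,\psi_{\octr})$; since $\sigma$ is winning, this exit satisfies the winning equivalence, and as that equivalence depends only on $q$, ending here wins for $\pl$. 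If instead $\tau_\pl^n(q)$ is an action or response function, rule (iv) leads to a configuration $(\tlim-1,q')$ with $q'$ in the set $Q$ forced by $\sigma(\tlim^\dagger,q)$. The crucial point is that the invariant is preserved: because $\sigma$ is winning in $\mathbf{G}[q,\tlim^\dagger]$ and strategies are positional, $\sigma$ restricts to a winning strategy in the one-round subgame $\mathbf{G}[q',\tlim^\dagger-1]$ for every $q'\in Q$, whence $\mathcal{L}_\pl(q')>\tlim^\dagger-1\geq\tlim-1$. Thus $\tlim-1\leq n$ and $\tlim-1<\mathcal{L}_\pl(q')$, so the induction hypothesis applied at $(\tlim-1,q')$ completes the play.

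The main obstacle I anticipate is this preservation step, and the difficulty is conceptual rather than computational: $\tau_\pl^n$ does not follow a single fixed winning strategy but stitches together the initial moves of \emph{locally} chosen winning strategies $\sigma$, which differ from state to state. The forced-set analysis together with positionality (so that a global winner restricts to a winner of each one-round subgame) is precisely what lets the winning time label act as a decreasing potential tied to the shrinking time budget; and it is the uniform bound $\tlim^\dagger\geq\tlim$, verified in every branch of the definition of $\tau_\pl^n$, that upgrades the crude estimate $\mathcal{L}_\pl(q')>\tlim^\dagger-1$ to the needed $\mathcal{L}_\pl(q')>\tlim-1$. Finally, since $n<\omega$ all time limits stay finite and strictly decrease by one per round, so no limit ordinal is ever reached, no timer is involved, and the induction on $\tlim$ is well-founded.
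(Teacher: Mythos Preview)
Your proof is correct and follows essentially the same induction on the current time limit as the paper's own argument: both reduce to showing that $\tau_\pl^n$ copies the first move of some $\sigma$ that wins $\mathbf{G}[q,\tlim^\dagger]$ with $\tlim^\dagger\geq\tlim$, then use the forced set $Q$ to invoke the inductive hypothesis at $(\tlim-1,q')$. If anything, your treatment is slightly more explicit than the paper's, since you separately handle the case $\tau_\pl^n(q)=\psi_{\octr}$ and you work directly with $\mathbf{G}[q,\tlim^\dagger]$ rather than $\mathbf{G}[q,m]$ when analysing the successors in $Q$.
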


\begin{proof}
We will prove by induction on $m\leq n$ that for any $q\in\St$, if $\pl$ has winning strategy in $\mathbf{G}[q,m]$, then $\tau_\pl^n$ is a winning strategy in $\mathbf{G}[q,m]$.
If $m=0$ and ${\pl}$ has a winning strategy $\sigma_\pl$ in $\mathbf{G}[q,0]$, then every strategy of $\pl$ will be a winning strategy in $\mathbf{G}[q,0]$. Hence, in particular, $\tau_\pl^n$ is a winning strategy in $\mathbf{G}[q,0]$.

Suppose then that the claim holds for $m\!-\!1$ and that $\pl$ has a winning strategy in $\mathbf{G}[q,m]$. Thus, by Proposition~\ref{the: Winning time labels}, we have $\mathcal{L}_\pl(q)>m$ or $\mathcal{L}_\pl(q)=\win$.

Suppose first that $\mathcal{L}_\pl(q)=m'<\omega$, and let $\sigma_\pl$ be a strategy for which $l(q,\sigma_\pl)=m'$
and $\tau_\pl^n(q)=\sigma_\pl(m'\!-\!1,q)$
(such a strategy $\sigma_{\pl}$ exists by the definition of $\tau_{\pl}^n$).
Let $Q\subseteq\St$ be the set of states forced by $\sigma_\pl(m'\!-\!1,q)$. Since $m'>m$, the strategy $\sigma_\pl$ must be a winning strategy in $\mathbf{G}[q,m]$, 
and thus it will also be a winning strategy in 
$\mathbf{G}[q',m\!-\!1]$ for every $q'\in Q$. 
Thus, by the inductive hypothesis, $\tau_\pl^n$ is a winning strategy in $\mathbf{G}[q',m\!-\!1]$ for every $q'\in Q$. Therefore we 
observe that $\tau_{\pl}^n$ will also be a winning strategy in $\mathbf{G}[q',m]$.
Suppose then that $\mathcal{L}_\pl(q)\geq\omega$ or $\mathcal{L}_\pl(q)=\win$, and let $\sigma_\pl$ be a strategy such that $l(q,\sigma_\pl)\in\{\win\}\cup\{\tlim<\tlimbound\mid \tlim>n\}$ and $\tau_\pl^n(q)=\tau_\pl(n,q)=\sigma_\pl(n,q)$.
(Recall Definition 
\ref{def: Canonical strategy of the non-controller}; the strategy $\sigma_{\pl}$ exists by the
definitions of $\tau_{\pl}^n$ and $\tau_\pl$.)
Let $Q\subseteq\St$ be the set of states that is forced by $\sigma_\pl(n,q)$. Since $m\leq n$, the strategy $\sigma_\pl$ is a winning strategy in $\mathbf{G}[q,m]$, and thus it is also a winning strategy in $\mathbf{G}[q',m\!-\!1]$ for every $q'\in Q$. Hence, by the inductive hypothesis, $\tau_\pl^n$ is a winning strategy in $\mathbf{G}[q',m\!-\!1]$ for every $q'\in Q$. Thus, we observe that $\tau_\pl^n$ is a winning strategy in $\mathbf{G}[q,m]$.
\end{proof}
\begin{example}
If $\mathcal{L}_{\octr}(q)=\omega$, the player $\octr$ can win the game with any time limit $n<\omega$,
but there is no single strategy that would win for every $n$. But if $\octr$ knows that the initial time limit is (at most) $m$, then (s)he knows that the $m$-canonical strategy will a winning strategy for her/him. Therefore, intuitively, $\octr$ only needs to know the time limit when selecting the strategy, but not when using it (since $n$-canonical strategies depend on states only).
\end{example}
%


\subsection{Determinacy of bounded embedded games}

The correspondence in the following proposition between the winning time labels of $\ctr$ and $\octr$
will be the key for proving determinacy of bounded embedded games.
The main idea for proving the proposition is similar to the 
one in the standard proof of the Gale-Stewart Theorem.

\begin{proposition}\label{the: Correspondence of the winning time labels}
Let
$\mathbf{G}=\embgame(\vrf,\ctr,A,\initst,\psi_{\ctr},\psi_{\octr})$ be an embedded game.
Then for each state $q\in\St$ and each ordinal $\tlim<\tlimbound$, it holds that 
$
	\mathcal{L}_{\ctr}(q)\!=\!\tlim \text{ iff } 
	\mathcal{L}_{\octr}(q)\!=\!\tlim
$
\end{proposition}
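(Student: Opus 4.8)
The plan is to reduce the statement to a \emph{determinacy} property of bounded embedded games at the level of single configurations, and then read off both winning time labels from the way determinacy partitions the admissible time limits. For a fixed state $q$, write
$W_{\ctr}(q):=\{\tlim<\tlimbound \mid \ctr$ has a timed winning strategy in $\mathbf{G}[q,\tlim]\}$ and $W_{\octr}(q):=\{\tlim<\tlimbound \mid \octr$ has a winning strategy in $\mathbf{G}[q,\tlim]\}$. Unwinding Case~1 of Definition~\ref{def: Winning time labels} gives directly $\mathcal{L}_{\ctr}(q)=\min W_{\ctr}(q)$ (with $\mathcal{L}_{\ctr}(q)=\lose$ when $W_{\ctr}(q)=\emptyset$); moreover, Proposition~\ref{the: Winning time labels} shows that $\mathcal{L}_{\ctr}(q)=\tlim<\tlimbound$ iff $W_{\ctr}(q)=[\tlim,\tlimbound)$, and that $\mathcal{L}_{\octr}(q)=\tlim<\tlimbound$ iff $W_{\octr}(q)=[0,\tlim)$. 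Hence it suffices to prove that $W_{\ctr}(q)$ and $W_{\octr}(q)$ partition $[0,\tlimbound)$, i.e. $W_{\octr}(q)=[0,\tlimbound)\setminus W_{\ctr}(q)$ for every $q$.

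The heart of the argument is therefore the claim that the bounded embedded game is \emph{determined} at each configuration: for all $q$ and all $\tlim<\tlimbound$, exactly one of $\ctr$, $\octr$ has a winning strategy in $\mathbf{G}[q,\tlim]$. That \emph{at most} one of them can win is immediate: if $\ctr$ had a timed winning strategy $(\sigma_{\ctr},t)$ and $\octr$ a winning strategy $\sigma_{\octr}$, playing these against each other determines a unique play, which---time limits being well-founded---terminates at a single exit position $(\vrf,q',\psi)$; but at any exit position exactly one of the equivalences $\mathcal{M},q'\models_{\tlimbound}^g\psi\Leftrightarrow\ctr=\vrf$ and $\mathcal{M},q'\models_{\tlimbound}^g\psi\Leftrightarrow\octr=\vrf$ holds (since $\ctr\neq\octr$ and $\vrf\in\{\ctr,\octr\}$), so the play cannot be won by both. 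The non-trivial content is that \emph{at least} one player wins, and this I would prove, in the Gale--Stewart style indicated, by transfinite induction on $\tlim$.

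For the inductive step I would unfold a single round at a configuration $(\tlim,q)$ into a finite-depth game tree following the rule order of the bounded embedded game: if $\tlim=0$ the round is the single determined leaf $(\vrf,q,\psi_{\ctr})$; otherwise the tree branches on $\ctr$'s choice whether to end (rule~ii), then on $\octr$'s choice whether to end (rule~iii), then on the step game $\step(\vrf,A,q)$ (first $\vrf$ picks a tuple in $\avact(A,q)$, then $\ovrf$ a response), and finally, for the decrement, on the new time limit $\tlim'$ (where $\tlim'=\tlim-1$ if $\tlim$ is a successor, and $\tlim'<\tlim$ is chosen by $\ctr$ if $\tlim$ is a limit). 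Every leaf of this tree is either a determined exit position or the initial configuration of a subgame $\mathbf{G}[q',\tlim']$ with $\tlim'<\tlim$, which is determined by the induction hypothesis. Labelling each determined leaf by its unique winner and propagating upward by the usual minimax rule---a node where player $X$ moves is an $X$-win iff some child is, and an $\overline{X}$-win iff every child is---then assigns a winner to the root $(\tlim,q)$. Because the tree has finite depth, this labelling is well-defined even though the action nodes and the limit-decrement node may have infinitely many children. The main obstacle is precisely this limit-ordinal round: one must check that the controller's infinitely many decrement options are handled correctly by the minimax rule, and keep careful track of the roles of $\vrf,\ctr$ and their opponents so that each exit position is matched to its actual winner.

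Granted determinacy, we have $W_{\octr}(q)=[0,\tlimbound)\setminus W_{\ctr}(q)$, and the proposition follows from the characterisations above. If $\mathcal{L}_{\ctr}(q)=\tlim<\tlimbound$, then $W_{\ctr}(q)=[\tlim,\tlimbound)$ by Proposition~\ref{the: Winning time labels}, whence $W_{\octr}(q)=[0,\tlim)$, and so $\mathcal{L}_{\octr}(q)=\tlim$; the converse is symmetric, using $\mathcal{L}_{\ctr}(q)=\min W_{\ctr}(q)$. The non-ordinal boundary cases are consistent as well: $W_{\ctr}(q)=\emptyset$ holds iff $W_{\octr}(q)=[0,\tlimbound)$, and in that case Lemma~\ref{the: Canonical strategy} provides a single uniform winning strategy $\tau_{\octr}$ for $\octr$, so that $\mathcal{L}_{\ctr}(q)=\lose$ and $\mathcal{L}_{\octr}(q)=\win$, neither being an ordinal.
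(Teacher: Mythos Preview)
Your approach is correct but inverts the paper's logical order. The paper proves the label correspondence directly by transfinite induction on $\gamma$: the forward direction uses that a timed winning strategy for $\ctr$ excludes one for $\octr$ (together with the inductive hypothesis), while the backward direction explicitly builds a timed winning strategy for $\ctr$ by showing $\ctr$ can force the next state into $\{q':\mathcal{L}_{\octr}(q')<\gamma\}$ and then grafts on the canonical strategy $\tau_{\ctr}$ with the canonical timer there. Determinacy of bounded embedded games (Proposition~\ref{the: Determinacy of the bounded embedded game}) is afterwards obtained as a corollary of the label correspondence. You instead prove configuration-level determinacy first, via the one-round unfolding and minimax backward induction you describe, and then read off the label correspondence almost for free from determinacy together with Proposition~\ref{the: Winning time labels}. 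Your route is more modular---determinacy is isolated as the key lemma rather than interleaved with the label machinery---and avoids invoking canonical strategies in the inductive step; the paper's route keeps the canonical-strategy apparatus doing double duty and gets the label equality a bit more directly without first naming the sets $W_{\ctr}(q),W_{\octr}(q)$. Both arguments share the same Gale--Stewart core, construct positional (configuration-based) strategies, and handle the limit-ordinal decrement the same way. Your final paragraph on the non-ordinal boundary case is fine but not needed for the proposition as stated, which concerns only ordinal values $\gamma<\tlimbound$.
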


\begin{proof}
\textit{(Sketch)} 
We can prove the claim by transfinite induction on $\tlim$. The case $\tlim=0$ is clear, since if $\ctr$ cannot win the game with time limit $0$, then $\octr$ will win it automatically. We then suppose that the claim holds for every $\tlim'<\tlim$ and prove the equivalence for $\tlim$. If $\mathcal{L}_\ctr(q)=\gamma$, then $\ctr$ has a winning strategy in $\mathbf{G}[q,\tlim]$ and thus $\octr$ cannot have a winning strategy in that game. Hence by Proposition~\ref{the: Winning time labels} we have $\mathcal{L}_{\octr}\leq\tlim$.
By the inductive hypothesis, $\mathcal{L}_{\octr}\neq\tlim'$ for every $\tlim'<\tlim$ and thus $\mathcal{L}_{\octr}(q)=\gamma$.
Suppose then that $\mathcal{L}_{\octr}(q)=\gamma$. If there existed some $\sigma_{\octr}$, $\tlim'<\tlimbound$ and $Q\subseteq\St$ forced by $\sigma_{\octr}(\tlim',q)$ such that $\mathcal{L}_{\octr}(q')\geq\tlim$ for every $q'\in Q$, then we could have used $\sigma_{\octr}$ to construct a winning strategy for $\octr$ in $\mathbf{G}[q,\tlim]$. This is not possible since we have $\mathcal{L}_{\octr}(q)=\gamma$. 
Thus it can be shown that $\ctr$ can play so that for all possible successor states $q'$, we have $\mathcal{L}_{\octr}(q')<\gamma$, whence by the inductive hypothesis $\mathcal{L}_{\ctr}(q')<\gamma$.
Hence we can construct a timed winning strategy for $\ctr$ in $\mathbf{G}[q,\tlim]$,  and thus infer that $\mathcal{L}_\ctr(q)=\gamma$.
For a detailed proof, see the appendix. 
\end{proof}

 Recall that apart from ordinal values that are less than the bound $\tlimbound$, the only possible winning time label for $\ctr$ is the label $\lose$ (see Definition~\ref{def: Winning time labels}).
Similarly for $\octr$, the only non-ordinal value for the labels is $\win$. 
Hence by the previous proposition, we also have
\[
	\mathcal{L}_{\ctr}(q)=\lose \,\text{ iff }\,\mathcal{L}_{\octr}(q)=\win.
\]
It is now easy to show that all bounded embedded games are (positionally) determined.

\begin{corollary}
\label{the: Determinacy of the bounded embedded game}
The controller $\ctr$ has a timed winning strategy in a bounded embedded game 
$\embgame(\vrf,\ctr,A,\initst,\psi_{\ctr},\psi_{\octr})[\initlim]$
iff $\octr$ does not have a winning strategy in that game.
\end{corollary}

\begin{proof}
If $\mathcal{L}_\ctr(\initst)=\lose$, then $\mathcal{L}_{\octr}(\initst)=\win$, whence by Proposition~\ref{the: Winning time labels}, the player $\octr$ has a winning
strategy and $\ctr$ does not have a timed winning strategy. Else $\mathcal{L}_\ctr(\initst)=\tlim$ for some $\tlim<\tlimbound$. Now, by Proposition~\ref{the: Correspondence of the winning time labels}, also $\mathcal{L}_{\octr}(\initst)=\tlim$. If $\tlim\leq\initlim$, then by Proposition~\ref{the: Winning time labels} the player $\ctr$ has a timed winning strategy, while $\octr$ does not have a
winning strategy. Analogously, if $\tlim>\initlim$, then $\octr$ has a winning strategy, while $\ctr$ does not have a timed winning strategy.
\end{proof}

Due to the positional determinacy of bounded embedded games, we can prove that also bounded \emph{evaluation} games are positionally determined.

\begin{proposition}\label{the: Determinacy of the evaluation games}
Let $\mathcal{M}$ be a \CGM, $q\in\St$, $\varphi$ an \ATL-formula, $\tlimbound>0$ an ordinal and $\pl\in\{{\bf A},{\bf E}\}$. Then 
$\pl$ has a winning strategy in $\mathcal{G}(\mathcal{M},q,\varphi,\tlimbound)$ if and only if $\opl$ does not have a winning strategy in $(\mathcal{G},q,\varphi,\tlimbound)$.
\end{proposition}

\begin{proof}
(\textit{Sketch})
Since embedded games are positionally determined, we can prove the positional determinacy of the evaluation games by a simple induction on $\varphi$. 
%
%
For a detailed proof, see the appendix. 
\end{proof}

Consequently the following equivalence holds: \quad
$\mathcal{M},q\models_\tlimbound^g\neg\varphi \,\text{ iff }\, \mathcal{M},q\not\models_\tlimbound^g\varphi$.

\subsection{Finding stable time limit bounds}\label{stablelimitbounds}

\begin{definition}\label{def: Image degree}
Let $\mathcal{M}$ be a \CGM and let $q\in\St$. We define the \defstyle{branching degree of $q$}, $\bd(q)$, to be the cardinality of the set of states accessible from $q$ with a single transition:
$$\bd(q):=\card(\{o(q,\vec\alpha)\mid \vec\alpha\in\avact(\Agt,q)\}).$$
%
%
%
%
%
%
We define the \defstyle{regular branching bound of $\mathcal{M}$} to be the \emph{smallest} cardinal $\rbb(\mathcal{M})$ for which 
the following conditions hold:
\begin{enumerate}[leftmargin=*]
\item $\rbb(\mathcal{M}) > \bd(q)$ for every $q\in\St$.
\item $\rbb(\mathcal{M})$ is infinite.
\item $\rbb(\mathcal{M})$ is a \emph{regular} cardinal.
\end{enumerate}
\end{definition}
We will see that the value of $\rbb(\mathcal{M})$ is always a globally stable time limit bound for $\mathcal{M}$. We will also see that the regular branching bound is, in a sense, the smallest possible time limit bound that is guaranteed to be stable (see the claim of Proposition~\ref{prop:exII}).
\begin{remark}\label{rbb}
Let $\kappa$ be the smallest infinite cardinal for which $\bd(q)<\kappa$ for every $q\in\St$. Since successor cardinals are always regular, we
must have $\rbb(\mathcal{M})\leq\kappa^+$ ($\kappa^+$ here is the successor
\emph{cardinal} of $\kappa$). However, in a typical case with infinite models, we have $\rbb(\mathcal{M})=\kappa$. In particular
\[
	\rbb(\mathcal{M})=\omega 
	\;\; \text{ iff  } \;\; \mathcal{M} \text{ is \defstyle{image-finite} (that is, $\bd(q)<\omega$ for every $q\in\St$)}.
\]
Also, if $\card(\Act)\leq\kappa$ or $\card(\mathcal{\St})\leq\kappa$ for an infinite $\kappa$, then necessarily $\rbb(\mathcal{M})\leq\kappa^+$.
\end{remark}
The following lemma shows an important correspondence between the canonical strategies and the winning time labels of the controller.
\begin{lemma}
\label{the: Winning time labels 2}
Let  
$\mathbf{G}=\embgame(\vrf,\ctr,A,\initst,\psi_{\ctr},\psi_{\octr})$
be an embedded game, $\pl\in\{{\bf A},{\bf E}\}$ and ${\pl}=\ctr$. 
The following holds for every $q\in\St$:
if $\mathcal{L}_{\pl}(q)=\tlim>0$ and $Q\subseteq\St$ is forced by $\tau_{\pl}(q)$, 
then 
$\mathcal{L}_{\pl}(q')<\tlim$ for every $q'\in Q$, and
\begin{itemize}[leftmargin=*]
\item $\max\{\mathcal{L}_{\pl}(q')\mid q'\in Q\}=\tlim-1$, if $\tlim$ is a successor ordinal,
\item $\sup\{\mathcal{L}_{\pl}(q')\mid q'\in Q\}=\tlim$, if $\tlim$ is a limit ordinal.
\end{itemize}
\end{lemma}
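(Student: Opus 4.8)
The plan is to lean on the \emph{robustness} of the canonically timed strategy: by Proposition~\ref{the: Winning time labels}(1) a state $r$ with $\mathcal{L}_{\pl}(r)=\delta<\tlimbound$ admits a timed winning strategy in $\mathbf{G}[r,\delta']$ for every $\delta'$ with $\delta\leq\delta'<\tlimbound$, and by Lemma~\ref{the: Canonical strategy}(1) the pair $(\tau_{\pl},t_{can})$ is then itself such a strategy. First I would fix the setup. Since $\mathcal{L}_{\pl}(q)=\tlim>0$, robustness gives that $(\tau_{\pl},t_{can})$ is a timed winning strategy in $\mathbf{G}[q,\tlim]$. As $\tlim\neq 0$, the move $\tau_{\pl}(q)$ cannot be an ``end the game'' claim (such a claim would force $\mathcal{L}_{\pl}(q)=0$), so $\tau_{\pl}(q)$ is a tuple of actions for $A$ (if $\pl=\vrf$) or a response function for $\overline A$ (if $\pl=\ovrf$), and the forced set $Q$ is well defined.

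Next I would establish the uniform bound $\mathcal{L}_{\pl}(q')<\tlim$ for every $q'\in Q$. Fix such a $q'$. Applying the transition rule of the bounded embedded game once takes the play from $(\tlim,q)$ to a configuration $(\tlim'',q')$, where $\tlim''=\tlim-1$ when $\tlim$ is a successor ordinal and $\tlim''=t_{can}(\tlim,q')$ when $\tlim$ is a limit ordinal. Because $(\tau_{\pl},t_{can})$ wins $\mathbf{G}[q,\tlim]$ and strategies are positional, its restriction is a timed winning strategy in $\mathbf{G}[q',\tlim'']$; in the limit case this additionally forces $t_{can}(\tlim,q')\neq\void$, so by the clause defining the canonical timer in Definition~\ref{def: Canonical strategy of the controller} we get $t_{can}(\tlim,q')=\mathcal{L}_{\pl}(q')$. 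In either case $\mathcal{L}_{\pl}(q')\leq\tlim''<\tlim$.

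The two clauses then follow from the \emph{minimality} of $\tlim=\mathcal{L}_{\pl}(q)$, argued by contradiction. If $\tlim$ is a successor ordinal and $\mathcal{L}_{\pl}(q')<\tlim-1$ held for all $q'\in Q$, then $(\tau_{\pl},t_{can})$ would already win $\mathbf{G}[q,\tlim-1]$: after the first transition the remaining time limit is at least $\mathcal{L}_{\pl}(q')$ for each reachable $q'$ (it equals $\tlim-2$ if $\tlim-1$ is a successor and $t_{can}(\tlim-1,q')=\mathcal{L}_{\pl}(q')$ if $\tlim-1$ is a limit), so each continuation is won by robustness. This would give $\mathcal{L}_{\pl}(q)\leq\tlim-1$, contradicting $\mathcal{L}_{\pl}(q)=\tlim$; hence some $q'\in Q$ attains $\mathcal{L}_{\pl}(q')=\tlim-1$ and $\max\{\mathcal{L}_{\pl}(q')\mid q'\in Q\}=\tlim-1$. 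If instead $\tlim$ is a limit ordinal and some $\tlim'<\tlim$ bounded the labels on $Q$, then $\mathcal{L}_{\pl}(q')<\tlim'+1$ for every $q'\in Q$; starting from the successor ordinal $\tlim'+1<\tlim$, the first transition lowers the time limit to $\tlim'\geq\mathcal{L}_{\pl}(q')$, so $(\tau_{\pl},t_{can})$ wins $\mathbf{G}[q,\tlim'+1]$ by robustness, forcing $\mathcal{L}_{\pl}(q)\leq\tlim'+1<\tlim$, again a contradiction. Thus $\sup\{\mathcal{L}_{\pl}(q')\mid q'\in Q\}=\tlim$.

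The main obstacle I anticipate is bookkeeping around the time-decrement rule rather than any deep idea: the successor case decreases the limit by exactly one while the limit case is governed by the timer, so in the limit case one must check that a winning play never assigns the timer the value $\void$ (equivalently, that every forced successor $q'$ genuinely carries an ordinal label strictly below $\tlim$). Once this is in place, everything reduces to the monotone robustness of the canonical strategy across admissible time limits, which is already supplied by Lemma~\ref{the: Canonical strategy} and Proposition~\ref{the: Winning time labels}.
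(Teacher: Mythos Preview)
Your proposal is correct and follows essentially the same approach as the paper: use Lemma~\ref{the: Canonical strategy} to conclude that $(\tau_{\pl},t_{can})$ wins $\mathbf{G}[q,\tlim]$, read off $\mathcal{L}_{\pl}(q')<\tlim$ for all forced successors, and then derive both the successor and limit clauses by contradiction against the minimality of $\tlim$ (showing that otherwise $(\tau_{\pl},t_{can})$ would already win $\mathbf{G}[q,\tlim-1]$ or $\mathbf{G}[q,\tlim'+1]$). Your treatment of the time-decrement rule is in fact a bit more careful than the paper's, which invokes $t_{can}(\tlim,q')$ uniformly even though the timer is only defined at limit ordinals; your case split between successor and limit for the uniform bound is the cleaner presentation, but the underlying argument is the same.
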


\begin{proof} 
(\textit {Sketch})
When $\mathcal{L}_{\pl}(q)=\tlim>0$, by Proposition~\ref{the: Canonical strategy}, $(\tau_\pl,t_{can})$ is a timed winning strategy in $\mathbf{G}[q,\tlim]$. 
Therefore every winning time label in the set $Q$ forced by $\tau_\pl(q)$ must be an
ordinal less than $\tlim$. If $\tlim$ is a successor ordinal, then there must be some state with label $\tlim\!-\!1$
in $Q$, and if $\tlim$ is a limit ordinal, then $\tlim$ must be the supremum of the labels in $Q$ (else there would be a winning strategy
for $\ctr$ in $\mathbf{G}[q,\tlim']$ for some $\tlim'<\tlim$).
For more details, see the proof in the appendix. 
\end{proof}

The following lemma shows that if a certain ordinal-valued winning time label exist for an embedded game, then all the smaller winning time labels must exist for that game, as well.
\begin{lemma}\label{the: Downwards existence}
Let $\mathbf{G}=\embgame(\vrf,\ctr,A,\initst,\psi_{\ctr},\psi_{\octr})$ be an embedded game and $\tlim<\tlimbound$ an ordinal. Assume that $\mathcal{L}_\pl(q)=\tlim$ for some $q\in\St$ and $\pl\in\{{\bf A},{\bf E}\}$. Then for every $\delta\leq\tlim$ there is a state $q_\delta$ for which $\mathcal{L}_\ctr(q_\delta)=\delta$.
\end{lemma}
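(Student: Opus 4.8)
The plan is to prove the statement by transfinite induction on $\tlim$, after first reducing to the case where $\pl$ is the controller $\ctr$. To perform this reduction I would invoke Proposition~\ref{the: Correspondence of the winning time labels}, which gives, for ordinal values below $\tlimbound$, the equivalence $\mathcal{L}_{\ctr}(q)=\tlim$ iff $\mathcal{L}_{\octr}(q)=\tlim$. Hence, whether the hypothesis $\mathcal{L}_{\pl}(q)=\tlim$ is given for $\pl=\ctr$ or for $\pl=\octr$, we may in either case assume $\mathcal{L}_{\ctr}(q)=\tlim$. It therefore suffices to show: whenever $\mathcal{L}_{\ctr}(q)=\tlim<\tlimbound$, then for every $\delta\leq\tlim$ there is a state $q_\delta$ with $\mathcal{L}_{\ctr}(q_\delta)=\delta$. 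Since the conclusion only concerns controller labels, this is exactly what is needed.

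For the base case $\tlim=0$ the only relevant $\delta$ is $0$, and we simply take $q_0:=q$. For the inductive step I would assume the claim for all ordinals strictly below $\tlim$ and suppose $\mathcal{L}_{\ctr}(q)=\tlim>0$. The choice $\delta=\tlim$ is witnessed by $q_\tlim:=q$, so we fix $\delta<\tlim$. Let $Q\subseteq\St$ be the set forced by the canonical strategy $\tau_{\ctr}(q)$. The engine of the argument is Lemma~\ref{the: Winning time labels 2}, which tells us that every $q'\in Q$ satisfies $\mathcal{L}_{\ctr}(q')<\tlim$, and moreover that the labels occurring in $Q$ attain the maximum $\tlim-1$ when $\tlim$ is a successor and have supremum $\tlim$ when $\tlim$ is a limit.

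From here the two cases descend directly. If $\tlim=\mu+1$ is a successor, I pick $q'\in Q$ with $\mathcal{L}_{\ctr}(q')=\mu$; since $\mu<\tlim$, the induction hypothesis applied to $q'$ realises every ordinal up to $\mu=\tlim-1$ as a controller label, and together with $q_\tlim=q$ this covers all $\delta\leq\tlim$. If $\tlim$ is a limit, then because $\sup\{\mathcal{L}_{\ctr}(q')\mid q'\in Q\}=\tlim>\delta$ there must be some $q'\in Q$ with $\mathcal{L}_{\ctr}(q')\geq\delta$ (otherwise the supremum would be at most $\delta<\tlim$); applying the induction hypothesis to this $q'$, whose label is strictly below $\tlim$, yields a state with label exactly $\delta$.

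Essentially all the work is delegated to Lemma~\ref{the: Winning time labels 2} and the correspondence of labels in Proposition~\ref{the: Correspondence of the winning time labels}, so I expect no serious obstacle. The only point demanding care is the limit case, where one must use the supremum property correctly: for each $\delta<\tlim$ one needs a \emph{forced} state whose (strictly smaller) label is at least $\delta$ before the induction hypothesis can be applied, and it is precisely the fact that the forced labels are unbounded below $\tlim$ that guarantees such a state exists.
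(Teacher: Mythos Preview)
Your proposal is correct and follows essentially the same approach as the paper: reduce to the controller via Proposition~\ref{the: Correspondence of the winning time labels}, do transfinite induction on $\tlim$, and use Lemma~\ref{the: Winning time labels 2} on the set forced by $\tau_{\ctr}(q)$ to descend in the successor and limit cases. The paper's write-up is slightly terser (it omits an explicit base case and phrases the limit step as finding $q'$ with $\delta<\mathcal{L}_{\ctr}(q')<\tlim$), but the argument is the same.
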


\begin{proof}
We prove the claim by transfinite induction on $\tlim<\tlimbound$ for every $q\in\St$. Let the inductive hypothesis be that the claim holds for every $\tlim'<\tlim$ and suppose that either of the players has winning time label $\tlim$ at some state $q$. By Proposition~\ref{the: Correspondence of the winning time labels}, we have $\mathcal{L}_\ctr(q)=\tlim$.

Assume that $\delta<\tlim$. If $\tlim$ is a successor ordinal, then by Lemma~\ref{the: Winning time labels 2} there is a state $q'\in\St$ such that  $\mathcal{L}_\ctr(q')=\tlim\!-\!1$. Since $\delta\leq\tlim\!-\!1$, by the inductive hypothesis there is a state $q_\delta$ for which $\mathcal{L}_\ctr(q_\delta)=\delta$.
Suppose then that $\tlim$ is a limit ordinal. By Lemma~\ref{the: Winning time labels 2}\,, there must be a state $q'\in\St$ such that $\mathcal{L}_\ctr(q')=\tlim'$ for some ordinal $\tlim'$ such that $\delta<\tlim'<\tlim$. Hence by the inductive hypothesis there is a state $q_\delta$ for which $\mathcal{L}_\ctr(q_\delta)=\delta$.
\end{proof}
In the following proposition, we show that in finite models, all winning time labels are strictly smaller than the cardinality of the model.

\begin{proposition}
\label{prop: Stable bound for finite models}
Let $\mathcal{M}$ be a \CGM. If $\card(\mathcal{M})=n<\omega$, then $n$ is a globally stable time limit bound for $\mathcal{M}$. 
\end{proposition}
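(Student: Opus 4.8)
The plan is to show that in a model with at most $n$ states no embedded game can ever produce an ordinal-valued winning time label as large as $n$, no matter how large the ambient time limit bound is; global stability of $n$ then follows immediately from Proposition~\ref{the: stability of labels}.

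First I would fix an arbitrary ordinal $\tlimbound'>n$ and consider the evaluation game $\mathcal{G}_{\tlimbound'}=\evalgame(\mathcal{M},\qzero,\varphi,\tlimbound')$ together with an arbitrary embedded game $\mathbf{G}$ occurring in it. The central claim is that every winning time label $\mathcal{L}_{\pl}(q)$ of $\mathbf{G}$ that happens to be an ordinal already satisfies $\mathcal{L}_{\pl}(q)<n$. To prove this I argue by contradiction: suppose $\mathcal{L}_{\pl}(q)=\tlim$ for some state $q$, some player $\pl$, and some ordinal $\tlim\geq n$ (note $\tlim<\tlimbound'$ since it is a genuine ordinal label). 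By Lemma~\ref{the: Downwards existence}, for every $\delta\leq\tlim$ there is a state $q_\delta\in\St$ with $\mathcal{L}_{\ctr}(q_\delta)=\delta$. Distinct ordinals are distinct controller labels, so the assignment $\delta\mapsto q_\delta$ is injective, forcing $\St$ to contain at least $\tlim+1\geq n+1$ pairwise distinct states. This contradicts $\card(\St)\leq\card(\mathcal{M})=n$. Hence every ordinal-valued label in $\mathbf{G}$ is strictly below $n$.

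With the claim established I would finish as follows. Setting $\tlimbound=n$, the claim states precisely that in the embedded games of $\mathcal{G}_{\tlimbound'}$ all winning time labels (for both players) lie strictly below $\tlimbound=n$. Proposition~\ref{the: stability of labels} then yields that each player has the same winning time labels in the embedded games of $\mathcal{G}_{n}$ as in those of $\mathcal{G}_{\tlimbound'}$. Since the embedded game $\mathbf{G}$ and the ordinal $\tlimbound'>n$ were arbitrary, increasing the time limit bound above $n$ never alters any winning time label in any embedded game of any evaluation game over $\mathcal{M}$, which is exactly the assertion that $n$ is globally stable for $\mathcal{M}$.

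The argument is at heart a pigeonhole count, so I expect no real computational difficulty. The only points needing care are bookkeeping ones. First, the bound $\tlim<n$ must be obtained while working inside the larger game $\mathcal{G}_{\tlimbound'}$ rather than $\mathcal{G}_{n}$, so that the hypothesis of Proposition~\ref{the: stability of labels} is met in the required direction, and it must hold uniformly across all embedded games of all evaluation games over $\mathcal{M}$. Second, the injectivity step depends on the states $q_\delta$ delivered by Lemma~\ref{the: Downwards existence} carrying pairwise different controller labels; this is the sole place where the finiteness of $\St$ is actually invoked, and it is what converts a label of size $\geq n$ into the impossible demand of $n+1$ distinct states.
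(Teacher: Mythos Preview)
Your proposal is correct and follows essentially the same approach as the paper's proof: both argue by contradiction using Lemma~\ref{the: Downwards existence} to produce at least $n+1$ states with pairwise distinct controller labels, violating $\card(\mathcal{M})=n$. Your write-up is somewhat more explicit about the bookkeeping---you carefully work inside an arbitrary larger game $\mathcal{G}_{\tlimbound'}$ and then invoke Proposition~\ref{the: stability of labels} to convert the label bound into the stability statement, whereas the paper leaves this last step implicit---but the mathematical content is the same pigeonhole argument.
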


\begin{proof}
(Recall that, by Proposition~\ref{the: stability of labels}, if all possible winning time labels are below an ordinal $\tlimbound$, for an arbitrary time limit bound $\tlimbound'$, then $\tlimbound$ is globally stable for $\mathcal{M}$.)
If there was some state with a winning time label $\tlim\geq n$, then by Lemma~\ref{the: Downwards existence}, there would be a state $q\in\St$ for which $\mathcal{L}_\ctr(q)=n$. Further, by Lemma~\ref{the: Downwards existence}, we would now find states with winning time labels $n\!-\!1,n\!-\!2,\dots,0$. But since each state may only have a single label, this would mean that $\card(\mathcal{M})\geq n+1 > n$, a contradiction.
(This claim is also quite obvious by the observation that the controller can only win the embedded game by reaching a state in the truth set of the formula $\psi_\ctr$. Hence it would not be beneficial for the controller to go in cycles.)
\end{proof}

The following result shows how to find an upper bound for the values of possible winning time labels by just looking at the regular branching bound of a model. Recall that a cardinal $\kappa$ is \defstyle{regular} if $\kappa$ is equal to its own \emph{cofinality}, i.e., there is no set of less than $\kappa$ many ordinals, each less than $\kappa$, with supremum $\kappa$.
\begin{proposition}\label{the: Stable time limit bound for a model}
Let $\mathcal{M}$ be a \CGM.
Then $\rbb(\mathcal{M})$ is globally stable for $\mathcal{M}$.
\end{proposition}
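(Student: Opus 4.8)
The plan is to reduce global stability to a cardinality bound on winning time labels, and then to extract that bound from the regularity of $\rbb(\mathcal{M})$. Write $\kappa := \rbb(\mathcal{M})$ and fix an arbitrary ordinal $\tlimbound' > \kappa$. By Proposition~\ref{the: stability of labels} (taking the smaller bound to be $\kappa$), to prove that $\kappa$ is globally stable it suffices to show that in every embedded game $\mathbf{G} = \embgame(\vrf,\ctr,A,\initst,\psi_{\ctr},\psi_{\octr})$ occurring in $\evalgame(\mathcal{M},\qzero,\varphi,\tlimbound')$, all winning time labels for both players lie strictly below $\kappa$. Indeed, once this is known, Proposition~\ref{the: stability of labels} gives that the labels in $\mathcal{G}_\kappa$ and $\mathcal{G}_{\tlimbound'}$ agree for every $\tlimbound' > \kappa$, which is exactly the statement that raising the bound above $\kappa$ never alters (nor creates) labels, i.e.\ that $\kappa$ is stable for all embedded games of all evaluation games over $\mathcal{M}$.

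First I would fix one embedded game $\mathbf{G}$ in $\evalgame(\mathcal{M},\qzero,\varphi,\tlimbound')$ and argue that every ordinal-valued controller label lies below $\kappa$, via a minimal counterexample: let $\tlim < \tlimbound'$ be the least ordinal with $\mathcal{L}_{\ctr}(q)=\tlim \ge \kappa$ for some $q\in\St$. Let $Q\subseteq\St$ be the set forced by $\tau_{\ctr}(q)$. Since each element of $Q$ has the form $o(q,\vec\alpha)$, the key branching estimate $|Q| \le \bd(q) < \kappa$ holds by condition (1) in the definition of $\rbb(\mathcal{M})$. I would then split on the shape of $\tlim$, invoking Lemma~\ref{the: Winning time labels 2}.

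The successor case is routine: since $\kappa$ is an infinite cardinal, hence a limit ordinal, having $\tlim \ge \kappa$ with $\tlim$ a successor forces $\tlim > \kappa$, so $\tlim - 1 \ge \kappa$; Lemma~\ref{the: Winning time labels 2} then supplies $q' \in Q$ with $\mathcal{L}_{\ctr}(q')=\tlim - 1 \ge \kappa$ and $\tlim - 1 < \tlim$, contradicting minimality. The limit case is the heart of the matter and the main obstacle: Lemma~\ref{the: Winning time labels 2} gives $\sup\{\mathcal{L}_{\ctr}(q') \mid q' \in Q\} = \tlim$ with every $\mathcal{L}_{\ctr}(q') < \tlim$, so by minimality of $\tlim$ each such label is in fact $< \kappa$; but then $\{\mathcal{L}_{\ctr}(q') \mid q' \in Q\}$ is a set of fewer than $\kappa$ ordinals, each below $\kappa$, and regularity of $\kappa$ forces its supremum below $\kappa$, contradicting $\tlim \ge \kappa$. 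This is precisely the step for which the definition of $\rbb(\mathcal{M})$ as a \emph{regular} cardinal strictly above all branching degrees is indispensable.

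Finally, having bounded all controller labels below $\kappa$, I would invoke Proposition~\ref{the: Correspondence of the winning time labels}: the ordinal labels of $\octr$ coincide with those of $\ctr$ and are therefore also below $\kappa$, while the only remaining label values, $\lose$ for $\ctr$ and $\win$ for $\octr$, are not ordinals and so cannot be $\ge \kappa$. Thus all labels of both players in every embedded game of $\mathcal{G}_{\tlimbound'}$ remain below $\kappa$, and by the reduction of the first paragraph this completes the proof.
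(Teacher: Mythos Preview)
Your proof is correct and follows essentially the same route as the paper: show that no winning time label in any embedded game of $\evalgame(\mathcal{M},\qzero,\varphi,\tlimbound')$ can reach $\kappa=\rbb(\mathcal{M})$, then conclude stability. The only notable difference is how the critical ordinal is located. You take a minimal counterexample $\tlim\ge\kappa$ and split into successor and limit cases, whereas the paper invokes Lemma~\ref{the: Downwards existence} to pass directly to a state with $\mathcal{L}_\ctr(q)=\kappa$ exactly; since $\kappa$ is a limit ordinal, only the limit case of Lemma~\ref{the: Winning time labels 2} is then needed, and the regularity argument finishes immediately. Your version avoids citing Lemma~\ref{the: Downwards existence} at the small cost of the (easy) successor case; the paper's version is a line shorter but depends on that additional lemma. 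The core content---bounding $|Q|$ by $\bd(q)<\kappa$ and using regularity of $\kappa$ to contradict $\sup\{\mathcal{L}_\ctr(q')\mid q'\in Q\}\ge\kappa$---is identical.
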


\begin{proof}
For the sake of contradiction, suppose that there is $\tlimbound'>\rbb(\mathcal{M})$ and an embedded game $\mathbf{G}$ within a bounded evaluation game $\evalgame(\mathcal{M},\qzero,\varphi,\tlimbound')$ such that in $\mathbf{G}$ at least one of the players has winning time labels that are greater or equal to $\rbb(\mathcal{M})$.
By Lemma~\ref{the: Downwards existence}, there is $q\in\St$ for which $\mathcal{L}_\ctr(q)=\rbb(\mathcal{M})$. Let $Q\subseteq\St$ be the set of states that is forced by $\tau_\ctr(q)$.
Now $\card(Q)\leq\bd(q)<\rbb(\mathcal{M})$. 
By Lemma~\ref{the: Winning time labels 2}, $\mathcal{L}_\ctr(q')<\rbb(\mathcal{M})$ for every state $q'\in Q$, and furthermore, since  
$\rbb(\mathcal{M})$ is a limit ordinal,
$\rbb(\mathcal{M})$ must be the supremum of the winning time labels of the states in $Q$,
i.e. $\sup\{\mathcal{L}_\ctr(q')\mid q'\in Q\}=\rbb(\mathcal{M})$.

Now every winning time label in $Q$ is smaller than $\rbb(\mathcal{M})$ and the cardinality of $Q$ is less than $\rbb(\mathcal{M})$. Because $\rbb(\mathcal{M})$ is a regular cardinal, it is equal to its own cofinality. Hence we must have $\sup\{\mathcal{L}_\ctr(q')\mid q'\in Q\}<\rbb(\mathcal{M})$. This is a contradiction and thus $\rbb(\mathcal{M})$ must be globally stable for $\mathcal{M}$  (recall Proposition~\ref{the: stability of labels} on global stability).
\end{proof}

From Proposition~\ref{the: Stable time limit bound for a model} we obtain the following corollary on the stability of the time limit bounds used with the bounded and finitely bounded \GTS.
\begin{corollary}\label{cor: Stable time limit bound for a model}
Let $\mathcal{M}$ be any \CGM. 

\begin{enumerate}[leftmargin=*]
\item If $\card(\mathcal{M})=\kappa$, then $2^\kappa$ is a globally stable time limit bound for $\mathcal{M}$. 

\item  If $\mathcal{M}$ is image-finite, then $\omega$ is a globally stable time limit bound for $\mathcal{M}$.
\end{enumerate}
\end{corollary}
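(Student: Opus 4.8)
The plan is to derive both claims from Proposition~\ref{the: Stable time limit bound for a model}, which already furnishes one concrete globally stable bound, namely $\rbb(\mathcal{M})$, together with a single auxiliary observation: \emph{any} time limit bound that is at least as large as a globally stable bound is itself globally stable. Once this monotonicity is in place, it suffices to check that $\omega$ and $2^\kappa$ dominate the relevant globally stable bound $\rbb(\mathcal{M})$ in the two respective cases.

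First I would prove the monotonicity observation. Suppose $\tlimbound_0$ is globally stable for $\mathcal{M}$ and let $\tlimbound\geq\tlimbound_0$. Writing $\mathcal{L}^{\tlimbound}_{\pl}(q)$ for the winning time label computed with bound $\tlimbound$, global stability of $\tlimbound_0$ means (via Proposition~\ref{the: stability of labels} and its reformulation) that for every $\tlimbound'>\tlimbound_0$ the labels agree with those at $\tlimbound_0$ in every embedded game, so in particular every ordinal-valued label is strictly below $\tlimbound_0$. Applying this to both $\tlimbound$ and any $\tlimbound'>\tlimbound$ (both of which are $\geq\tlimbound_0$) yields $\mathcal{L}^{\tlimbound}_{\pl}(q)=\mathcal{L}^{\tlimbound_0}_{\pl}(q)=\mathcal{L}^{\tlimbound'}_{\pl}(q)$, so no label can change when the bound is increased past $\tlimbound$; hence $\tlimbound$ is globally stable. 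The point is simply that once all critical ordinals lie below $\tlimbound_0$, no larger bound can convert a $\win$ or $\lose$ label into an ordinal.

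Part~2 is then immediate: if $\mathcal{M}$ is image finite, Remark~\ref{rbb} gives $\rbb(\mathcal{M})=\omega$, and Proposition~\ref{the: Stable time limit bound for a model} says $\omega$ is globally stable. For Part~1, with $\card(\mathcal{M})=\kappa$, I would split into the finite and infinite cases. If $\kappa=n<\omega$, Proposition~\ref{prop: Stable bound for finite models} makes $n$ globally stable, and since $2^n\geq n$, monotonicity yields that $2^n=2^\kappa$ is globally stable. If $\kappa$ is infinite, then $\card(\St)\leq\kappa$ and $\card(\Act)\leq\kappa$, so Remark~\ref{rbb} gives $\rbb(\mathcal{M})\leq\kappa^+$; by Cantor's theorem $\kappa<2^\kappa$, hence $\kappa^+\leq 2^\kappa$, so $\rbb(\mathcal{M})\leq 2^\kappa$. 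Since $\rbb(\mathcal{M})$ is globally stable by Proposition~\ref{the: Stable time limit bound for a model}, monotonicity gives that $2^\kappa$ is globally stable as well.

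I expect the only genuinely non-routine step to be the monotonicity observation, and the subtlety there is purely bookkeeping about which labels are ordinals versus $\win$/$\lose$; everything else is a direct cardinal-arithmetic comparison feeding into the already-established stability of $\rbb(\mathcal{M})$.
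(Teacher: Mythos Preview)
Your proposal is correct and follows essentially the same approach as the paper: both reduce to Proposition~\ref{the: Stable time limit bound for a model} (and Proposition~\ref{prop: Stable bound for finite models} in the finite case) via the inequality $\rbb(\mathcal{M})\leq\kappa^+\leq 2^\kappa$ from Remark~\ref{rbb}. The only difference is that you spell out the monotonicity step (any bound $\geq$ a globally stable bound is itself globally stable), whereas the paper simply asserts it with ``whence also $2^\kappa$ is globally stable'' and ``thus $2^\kappa$ is a stable time limit bound''; making this explicit is a reasonable bit of added care.
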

\begin{proof}
(1) Let  $\card(\mathcal{M})\!=\!\kappa$.
If $\kappa<\omega$, then by Proposition~\ref{prop: Stable bound for finite models}, $\kappa$ is globally stable for $\mathcal{M}$, whence also $2^\kappa$ is globally stable for $\mathcal{M}$. 
And if $\kappa\geq\omega$, then $\rbb(\mathcal{M})\leq\kappa^+\leq 2^\kappa$ (see Remark~\ref{rbb}) and thus, by Proposition~\ref{the: Stable time limit bound for a model}, $2^\kappa$ is a globally stable time limit bound for $\mathcal{M}$.

(2) If $\mathcal{M}$ is image-finite, $\rbb(\mathcal{M})=\omega$. By Proposition~\ref{the: Stable time limit bound for a model}, $\omega$ is globally stable for $\mathcal{M}$.
\end{proof}

Note that we could replace $2^\kappa$ with $\kappa^+$ in the case (1) above.
This gives us a stronger result in the models of set theory that do not satisfy the 
generalized continuum hypothesis. Also note that since $\rbb(\mathcal{M})$ is determined only by the size of the branchings in the model, the cardinality $\kappa$ of $\mathcal{M}$ might be much greater than $\rbb(\mathcal{M})$ (which is globally stable for $\mathcal{M}$).

As seen by Proposition~\ref{the: Winning time labels 2}, only sufficiently large branchings can generate winning time labels of higher cardinality. Thus, when estimating the value of a stable time limit bound, it suffices check the sizes of the large branchings in the model. We know by Proposition~\ref{the: Stable time limit bound for a model}, that $\rbb(\mathcal{M})$ is guaranteed to be stable for a \CGM $\mathcal{M}$, but can we give any better estimate of a stable bound by just looking at the branchings in $\mathcal{M}$? The answer is negative, in the sense of the following Proposition~\ref{prop:exII}. Indeed, it implies that only knowing the least infinite strict upper bound of the branchings $\bd(q)$ of all states $q$ of a \CGM $\mathcal{M}$, no lower time limit bound than $\rbb(\mathcal{M})$ is guaranteed to be globally stable for $\mathcal{M}$.
Before the proposition, we give some auxiliary definitions.

Let $\mathcal{M}$ be a $\CGM$ and let $\kappa$ be an infinite cardinal such that the branching degrees in $\mathcal{M}$ are strictly bounded by $\kappa$, that is
$
	\bd(q) < \kappa \; \text{ for every } q\in\St.
$
Then we say that $\mathcal{M}$ is \defstyle{less than $\kappa$-branching}.
For an ordinal $\gamma$, we say that a \CGM $\mathcal{N}$
\defstyle{realizes} $\gamma$ if there is some state of $\mathcal{N}$
that realizes the winning time label $\gamma$ in some
embedded game.

\begin{proposition}
\label{prop:exII}
Let $\kappa$ be an infinite cardinal.
Then there exists a less than $\kappa$-branching model $\mathcal{M}$
that realizes every $\gamma<\rbb(\mathcal{M})$ and where
all branchings strictly less than $\kappa$ occur.
\end{proposition}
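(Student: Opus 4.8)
The plan is to exhibit one explicit model, built around an eventuality game for a formula of the form $\coop{\emptyset}\F p$, in which the opponent controls every transition and the verifier's winning time labels can be prescribed ordinal by ordinal. Write $\mu$ for the least infinite regular cardinal with $\mu\geq\kappa$ (so $\mu=\kappa$ if $\kappa$ is regular and $\mu=\kappa^+$ if $\kappa$ is singular). I would first isolate the purely set-theoretic fact that drives the argument: every regular cardinal strictly below $\mu$ is strictly below $\kappa$ (otherwise it would contradict the minimality of $\mu$), and hence, since $\mathrm{cf}(\gamma)$ is a regular cardinal $\leq|\gamma|<\mu$ for every $\gamma<\mu$, we get $\mathrm{cf}(\gamma)<\kappa$ for all $\gamma<\mu$. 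This is exactly where the regularity built into $\rbb(\mathcal{M})$ is essential, and it is the only genuinely delicate point; the rest is bookkeeping.

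For the construction, take $\Agt=\{a\}$, $\Prop=\{p\}$, and introduce for each ordinal $\gamma<\mu$ a state $s_\gamma$, with $p$ holding exactly at $s_0$ (and at the auxiliary $p$-states below). The transition structure is defined by cases: $s_0$ loops to itself; $s_{\delta+1}$ has a single action leading to $s_\delta$; and for a limit $\lambda$ we fix a strictly increasing cofinal sequence $\langle\gamma_\xi:\xi<\mathrm{cf}(\lambda)\rangle$ in $\lambda$ and give $s_\lambda$ one action per $\xi$ leading to the distinct state $s_{\gamma_\xi}$. Thus $\bd(s_\lambda)=\mathrm{cf}(\lambda)<\kappa$, while $\bd(s_{\delta+1})=\bd(s_0)=1$. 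I would then fix the time limit bound $\tlimbound=\mu=\rbb(\mathcal{M})$ (globally stable by Proposition~\ref{the: Stable time limit bound for a model}, so the labels are well defined and lie below $\mu$), consider the embedded game $\mathbf{G}$ arising from $\coop{\emptyset}\F p=\coop{\emptyset}\top\U p$ (so $\vrf=\ctr=\mathbf{E}$ leads $A=\emptyset$ while $\mathbf{A}$ leads $\{a\}$ and therefore chooses every transition), and prove by transfinite induction on $\gamma<\mu$ that $\mathcal{L}_{\mathbf{E}}(s_\gamma)=\gamma$.

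The inductive step is short and can be read off the characterisation of labels in Proposition~\ref{the: Winning time labels} together with Lemma~\ref{the: Winning time labels 2}. The base case gives $\mathcal{L}_{\mathbf{E}}(s_0)=0$ since $\mathbf{E}$ may exit immediately where $p$ holds. At $s_{\delta+1}$ the single forced successor $s_\delta$ has label $\delta$, so with time limit $\delta+1$ the verifier wins after one step, whereas any time limit $\leq\delta$ leaves the timer strictly below $\delta$ upon reaching $s_\delta$; hence the label is $\delta+1$. At a limit state $s_\lambda$ the forced set is $Q=\{s_{\gamma_\xi}\}$ with $\sup_\xi\gamma_\xi=\lambda$: with time limit $\lambda$ the controller, after the opponent's move to some $s_{\gamma_\xi}$, lowers the (limit) timer to $\gamma_\xi<\lambda$ and wins by the inductive hypothesis, while for any $\gamma'<\lambda$ the opponent moves to an $s_{\gamma_\xi}$ with $\gamma_\xi\geq\gamma'$ and the timer necessarily drops below $\gamma_\xi$, so $\mathbf{E}$ loses; thus the label is exactly $\lambda$. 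This realizes every $\gamma<\mu$.

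Finally I would settle the two remaining requirements. To make all branching degrees below $\kappa$ occur, adjoin for each cardinal $1\leq\lambda<\kappa$ a filler state $f_\lambda$ with $\lambda$ distinct successors, each a fresh $p$-looping state; then $\bd(f_\lambda)=\lambda$, and since these states reach $p$ in one or zero steps they only realize labels already present (namely $0$ and $1$) and never raise a branching to $\kappa$ or above. With these additions the set of branching degrees occurring in $\mathcal{M}$ is exactly $\{\lambda:1\leq\lambda<\kappa\}$, so $\bd(q)<\kappa$ for every $q\in\St$; computing $\rbb(\mathcal{M})$ as the least infinite regular cardinal above $\sup_{q}\bd(q)$ then yields $\rbb(\mathcal{M})=\mu$ uniformly in the successor, regular-limit, and singular cases. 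Hence $\mathcal{M}$ is less than $\kappa$-branching, realizes every $\gamma<\rbb(\mathcal{M})=\mu$, and exhibits all branchings strictly below $\kappa$, as required. The main obstacle, as flagged, is the cofinality bound $\mathrm{cf}(\gamma)<\kappa$ for $\gamma<\mu$, since this is precisely what lets these arbitrarily large labels be forced while every branching is kept strictly below $\kappa$.
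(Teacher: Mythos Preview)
Your proof is correct and, in fact, somewhat tighter than the paper's. Both arguments use the same underlying idea---an eventuality game for $\coop{\emptyset}\F p$ in which the opponent controls the sole agent, with states designed so that the verifier's winning time label at the ``$\gamma$-th'' state is exactly $\gamma$---and both add auxiliary states to force the full range of branchings below $\kappa$. The difference is in packaging. The paper builds, by transfinite induction, a separate tree-like model $\mathcal{M}_\gamma$ for each ordinal $\gamma$ (splitting the limit case according to whether $\card(\gamma)$ is regular or singular) and then takes a disjoint union of all of these, together with branching-witness models $\mathcal{N}_\mu$. You instead build a single spine with one state $s_\gamma$ per $\gamma<\mu$, and you isolate upfront the clean set-theoretic fact that governs the whole construction: every regular cardinal below $\mu$ is below $\kappa$, hence $\mathrm{cf}(\gamma)<\kappa$ for all $\gamma<\mu$. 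This lets you treat every limit stage uniformly with branching exactly $\mathrm{cf}(\lambda)$, avoiding the paper's regular/singular case split and sometimes producing strictly smaller branchings (e.g.\ at $\omega_1+\omega$ you branch by $\omega$ where the paper branches by $\omega_1$). The trade-off is that the paper's modular $\mathcal{M}_\gamma$ construction is reusable for other purposes, while your single-spine model is tailored to this proposition; for the statement at hand your route is the more economical one. One minor quibble: the phrase ``least infinite regular cardinal above $\sup_q \bd(q)$'' is not literally the definition of $\rbb(\mathcal{M})$ (the supremum need not be attained), but your actual verification that $\rbb(\mathcal{M})=\mu$ via the filler states is correct.
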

\begin{proof}
We begin the proof by constructing by transfinite induction a model $\mathcal{M}_{\gamma}$
for each $\gamma$ such that $\mathcal{M}_{\gamma}$ 
realizes exactly all ordinals less or equal to $\gamma$, and
furthermore, the following conditions hold.
\begin{enumerate}[leftmargin=*]
\item
If $\gamma$ is a finite ordinal, then the 
branching degree $\bd(q)$ of each state $q$ in $\mathcal{M}_{\gamma}$ is $1$.
\item
If $\gamma$ is an infinite ordinal, then $\card(\gamma)\leq\gamma$ is an upper bound for
the branchings $\bd(q)$ of the states $q$ in $\mathcal{M}_{\gamma}$.  Furthermore,
the following conditions hold.
\begin{enumerate}
\item
If $\card(\gamma)$ is a regular cardinal, then $\mathcal{M}_{\gamma}$ 
has a node with branching $\card(\gamma)$.
\item
If $\card(\gamma)$ is a singular (i.e., non-regular) cardinal,
then all states in $\mathcal{M}_{\gamma}$ 
have branching strictly less than $\card(\gamma)$.
\end{enumerate}
\end{enumerate}
All such models $\mathcal{M}_{\gamma}$
will be of the form $\mathcal{M}_\tlim=(\Agt, \St, \Prop, \Act, d, o, v)$, where $\Agt=\{a\}$, $q_0\in\St$, $\Prop=\{p\}$ and $\Act=\{\delta\mid\delta<\gamma\}$. We will always
establish that the winning time label for Eloise at the state $q_0$ in the embedded game $\mathbf{G}=({\bf E},{\bf E},\emptyset,q_0,p,\top)$ (which arises when verifying the formula $\varphi=\coop{\emptyset}\F p$ at $q_0$) is equal to $\gamma$.

If $\gamma=0$, we define $\mathcal{M}_0=(\Agt, \St, \Prop, \Act, d, o, v)$
where $\St=\{q_0\}$, $d(q_0,a)=\{0\}$, $o(q_0,0)=q_0$
and $v(p)=\{q_0\}$.
Now $\bd(q_0)=1$ and $\mathcal{L}_{\bf E}(q_0) = 0$.
%

Suppose then that $\tlim$ is a
successor ordinal.
Consider the model $\mathcal{M}_{\tlim\!-\!1}$ with $\mathcal{L}_{\bf E}(q_0)=\tlim\!-\!1$ obtained
by the induction hypothesis. Let $\mathcal{M}_{\tlim\!-\!1}'=(\Agt, \St', \Prop, \Act, d', o', v')$ be an isomorphic copy of $\mathcal{M}_{\tlim\!-\!1}$ in
which the state $q_0$ is replaced by a new
state $q'$. Let $\mathcal{M}_{\tlim}=(\Agt, \St, \Prop, \Act, d, o, v)$, where we define $\St:=\St'\cup\{q_0\}$,  $d:=d'\cup\{((q_0,a),\{0\})\}$, $o:=o'\cup\{((q_0,0),q')\}$ and $v:=v'$. Since $\bd(q_0)=1$,  the branchings of $\mathcal{M}_\tlim$
are bounded in the desired way by the induction hypothesis.
It is easy to see that $\mathcal{L}_{\bf E}(q_0)=\tlim$ , 
and all ordinals less than $\gamma$ are realized in $\mathcal{M}_{\gamma}$ by
the induction hypothesis.
Suppose then that $\tlim$ is a limit ordinal. We next construct, for 
later use, a set of ordinals
$\Psi\subseteq\{\delta\mid\delta<\tlim\}$ such that $\sup(\Psi)=\tlim$.
If $\card(\gamma)$ is a regular cardinal,
then we set $\Psi = \{\delta\, |\, \delta<\gamma\, \}$.
Otherwise, when $\card(\gamma)$ is a singular cardinal, we do
the following. Let $\mu$ be the cofinality of $\gamma$, whence there exists some set
$S$ of ordinals strictly less than $\tlim$ such that $\sup(S)=\tlim$
and $\card(S) = \mu$. Let $\Psi := S$.
Note that the following conditions hold.
\begin{enumerate}[leftmargin=*]
\item
If $\card(\gamma)$ is regular, then $\card(\Psi) = \card(\gamma)$. 
\item
If $\card(\gamma)$ is singular, then $\card(\Psi) < \card(\gamma)$. 
(This holds for the following reason.
The cofinality operator $\mathit{cf}$ satisfies $\mathit{cf}(\mathit{cf}(\alpha))
= \mathit{cf}(\alpha)$ for
each ordinal $\alpha$. Therefore, if $\mathit{cf}(\gamma) = \card(\gamma)$, we
obtain a contradiction  as follows: $\card(\gamma) = \mathit{cf}(\gamma)
= \mathit{cf}(\mathit{cf}(\gamma))
= \mathit{cf}(\card(\gamma))<\card(\gamma)$,
where the last inequality is due to $\card(\gamma)$ being singular.
Thus $\mathit{cf}(\gamma)<\card(\gamma)$,
whence $\card(\Psi) = \mathit{cf}(\gamma)<\card(\gamma)$, as required.)
\end{enumerate}
%

%
%

%
By the inductive hypothesis, for every ordinal $\delta\in\Psi$, there is a
suitable model $\mathcal{M}_{\delta}$
where $\mathcal{L}_{\bf E}(q_0)=\delta$.
We build an isomorphic copy $\mathcal{M}_{\delta}'=(\Agt, \St_\delta, \Prop, \Act_{\delta}, d_\delta, o_\delta, v_\delta)$ of every model $\mathcal{M}_\delta$ so that
the sets $\St_\delta$  are disjoint and such that the state $q_0$ is replaced with a new state $q_\delta$ in every model $\mathcal{M}_{\delta}'$.
Now let $\mathcal{M}_{\tlim}=(\Agt, \St, \Prop, \Act, d, o, v)$, where: 
\begin{itemize}
\item $\St=\bigcup_{\delta\in\Psi}\St_\delta\cup\{q_0\}$,  

\item $d=\bigcup_{\delta\in\Psi}d_\delta\cup\{((q_0,a),\{\delta\mid \delta\in\Psi\})\}$,

\item $o=\bigcup_{\delta\in\Psi}o_\delta\cup\bigcup_{\delta\in\Psi}\{((q_0,\delta),q_\delta)\}$, 

\item $v := \{(p,\ \bigcup_{\delta\in\Psi}v_\delta(p))\}$, i.e., $v$ is the function 
that maps $p$ to the union of the sets $v_{\delta}(p)$.
\end{itemize}

\noindent
Since $\sup(\Psi)=\tlim$, it is easy to see that $\mathcal{L}_{\bf E}(q_0)=\tlim$.
Clearly $\mathcal{M}_{\gamma}$ has the required properties,
whence the transfinite induction for constructing the models $\mathcal{M}_{\gamma}$ has now
been completed. 
%
%
%
%

%
Before completing our proof, we define a further 
class of models $\mathcal{N}_{\mu}$, for each infinite cardinal $\mu$, 
such that $\mathcal{N}_{\mu}$ has a node $q'$ with $\bd(q') = \mu$.
These models $\mathcal{N}_{\mu}$ are defined simply for the sake of simplifying
our subsequent arguments. Intuitively, the models $\mathcal{N}_{\mu}$ will be
used in order to directly force the existence of sufficiently large branchings.
Formally, we define $\mathcal{N}_{\mu}
=(\Agt, \St_\mu, \Prop, \Act_{\mu}, d_\mu, o_\mu, v_\mu)$ as follows.
We let $\Agt = \{a\}$
and $\St_\mu := \{q'\}\cup\{q_{\delta}\, |\, \delta<\mu,\text{ $\delta$ is an ordinal}\}$.
We also define $\Pi = \{p\}$ and let $\Act_{\mu}$ be the set of 
ordinals smaller than $\mu$. We define the set of actions available to
agent $a$ at $q'$ to be the set $\Act_{\mu}$, while the set of
actions available elsewhere is defined arbitrarily. The outcome of $a$
choosing the action $\delta$ at $q'$ will lead to $q_{\delta}$.
At other states, the outcomes of actions are defined arbitrarily.
Finally, we define $v_{\mu}(p) = \emptyset$.

To complete our proof, we now argue as follows.
Let $\kappa$ be an infinite cardinal.
Assume first that $\kappa$ is regular. Let $\mathcal{M}$ be the 
disjoint union of the 
models $\mathcal{M}_{\gamma}$ and $\mathcal{N}_{\gamma}$ for all $\gamma<\kappa$
constructed above.
Now, $\mathcal{M}$ is less than $\kappa$-branching,
and since $\kappa$ is regular, we have $\rbb({\mathcal{M}}) = \kappa$.
Clearly $\mathcal{M}$ realizes exactly all
ordinals $\gamma<\rbb(\mathcal{M})=\kappa$, as required.
(Recall that $\rbb(\mathcal{M})$ is a globally stable limit bound and 
thus no labels greater or equal to $\rbb(\mathcal{M})$ can be
realized in $\mathcal{M}$.)
Assume then that $\kappa$ is singular. Now let $\mathcal{M}$ be the 
disjoint union of the models $\mathcal{M}_{\gamma}$ for all $\gamma<\kappa^+$ and the
models $\mathcal{N}_{\gamma}$ for all $\gamma<\kappa$.
Again $\mathcal{M}$ is less than $\kappa$-branching.
Since $\kappa$ is not regular, we have $\rbb({\mathcal{M}}) = \kappa^+$,
and thus $\mathcal{M}$ realizes exactly all
ordinals $\gamma<\rbb(\mathcal{M})$, as required.
\end{proof}

\begin{example}
By the proof of Proposition~\ref{prop:exII}, we can construct a model $\mathcal{M}$ whose  branchings are strictly below the \emph{singular} cardinal $\aleph_\omega$, but nevertheless, all the winning time labels $\tlim<\aleph_{\omega}^+=\rbb(\mathcal{M})$ are realized in $\mathcal{M}$.
Thus, if we know only that the branchings of an arbitrary model $\mathcal{N}$ are 
bounded by the strict least upper bound $\aleph_{\omega}$, 
but know nothing else about the model,
we cannot give any better estimate than $\rbb(\mathcal{N})=\aleph_\omega^+$ for a stable time limit bound for $\mathcal{N}$.
\end{example}


\subsection{Unbounded vs bounded embedded games}\label{ssec: Unbounded vs. bounded}

As mentioned earlier, $\infty$-canonical strategies $\tau_\pl^\infty$ (recall Definition~\ref{def: Canonical strategy of the non-controller}) are important in evaluation games with sufficiently large time limit bounds $\tlimbound$. We will see that if the time limit bound $\tlimbound$ is stable and there exists a winning strategy for each $\tlim<\tlimbound$, then $\tau_\pl^\infty$ will be a winning strategy for \emph{each} $\tlim<\tlimbound$.
The intuition behind $\infty$-canonical strategies is that the player does not know the current time limit $\tlim$ (since the strategy depends on states only), but (s)he is playing  ``defensively'' by always assuming $\tlim$ to have the highest possible value (namely $\tlimbound-1$).

The following lemma shows that when $\tlimbound$ satisfies certain conditions, then, if $\pl$ uses an $\infty$-canonical strategy $\tau_\pl^\infty$ and begins from a state with the label $\win$, $\pl$ will always stay in states with the label $\win$. 

\begin{lemma}\label{the: Preserving the label win}
Let 
$\mathbf{G}=\embgame(\vrf,\ctr,A,\initst,\psi_{\ctr},\psi_{\octr})$
be an embedded game, $\pl\in\{{\bf A},{\bf E}\}$ and $\pl\neq\ctr$. Assume that the time limit bound $\tlimbound$ is a successor ordinal and $\tlimbound\!-\!1$ is stable for $\mathbf{G}$. Now, for every  $q\in\St$, if  $\mathcal{L}_\pl(q)=\win$ and $Q\subseteq\St$ is forced by $\tau_\pl^\infty$, then $\mathcal{L}_\pl(q')=\win$ for every $q'\in Q$.
\end{lemma}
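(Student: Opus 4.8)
The plan is to show that whenever $\pl$ (which, since $\pl\neq\ctr$, is the opponent $\octr$ of the controller) uses the $\infty$-canonical strategy from a state $q$ with label $\win$, every one-step successor $q'\in Q$ again has label $\win$. The crux is that stability of $\tlimbound-1$ forces every ordinal-valued label in $\mathbf{G}$ to lie strictly below $\tlimbound-1$; this is exactly what guarantees that any hypothetical ``bad'' successor $q'$ would be reachable at a time limit large enough to contradict the fact that $\pl$'s underlying strategy wins for \emph{all} time limits below $\tlimbound$.

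First I would unwind the definitions. Since $\mathcal{L}_\pl(q)=\win$, Proposition~\ref{the: Winning time labels}(2)(ii) supplies a strategy $\sigma_\pl$ that is a winning strategy in $\mathbf{G}[q,\tlim]$ for every $\tlim<\tlimbound$, and by Definition~\ref{def: Canonical strategy of the non-controller} we may take $\tau_\pl^\infty(q)=\tau_\pl(\tlimbound-1,q)=\sigma_\pl(\tlimbound-1,q)$ for exactly such a $\sigma_\pl$. Thus $Q$ is the set forced by $\sigma_\pl(\tlimbound-1,q)$; we may assume this value is an action tuple or response function, as otherwise no step is taken and $Q=\emptyset$, making the claim vacuous. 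Next I would record the consequence of the stability hypothesis: since $\tlimbound-1$ is stable for $\mathbf{G}$, the equivalent formulation of stability following Proposition~\ref{the: stability of labels} says that raising the bound to $\tlimbound$ creates no label $\geq\tlimbound-1$, so in $\mathbf{G}$ every ordinal-valued label is strictly below $\tlimbound-1$.

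The main step is a reachability observation combined with a ``tail'' argument. Suppose toward a contradiction that some $q'\in Q$ has $\mathcal{L}_\pl(q')\neq\win$; being a non-controller label (Definition~\ref{def: Winning time labels}, Case~2) it is then an ordinal $\lambda$, and by the previous paragraph $\lambda<\tlimbound-1$. I claim that a configuration $(\tilde\tlim,q')$ with $\lambda\le\tilde\tlim<\tlimbound$ is reachable from $(\tlimbound-1,q)$ in a play consistent with $\sigma_\pl$. If $\tlimbound-1$ is a successor ordinal, the forced transition lands at $(\tlimbound-2,q')$, and $\lambda\le\tlimbound-2$ because $\lambda<\tlimbound-1$. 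If $\tlimbound-1$ is a limit ordinal, then the opponent, who is the controller $\ctr$ since $\pl\neq\ctr$, may first make the step-game choice reaching $q'\in Q$ and then, by rule~iv, lower the time limit to exactly $\lambda<\tlimbound-1$, landing at $(\lambda,q')$. Because $(\tilde\tlim,q')$ is the initial configuration of $\mathbf{G}[q',\tilde\tlim]$ and all strategies are positional, any exit position reachable in $\mathbf{G}[q',\tilde\tlim]$ under $\sigma_\pl$ is already reachable in $\mathbf{G}[q,\tlimbound-1]$ under $\sigma_\pl$; since the latter is won by $\sigma_\pl$, it follows that $\sigma_\pl$ is a winning strategy in $\mathbf{G}[q',\tilde\tlim]$. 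But $\lambda\le\tilde\tlim<\tlimbound$, so Proposition~\ref{the: Winning time labels}(2)(i) gives that $\pl$ has \emph{no} winning strategy in $\mathbf{G}[q',\tilde\tlim]$ --- a contradiction. Hence $\mathcal{L}_\pl(q')=\win$ for every $q'\in Q$.

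The main obstacle I expect is pinning down the reachability step precisely, and in particular making explicit that stability is what keeps the critical label $\lambda$ strictly below $\tlimbound-1$: in the successor case this ensures the single available successor time limit $\tlimbound-2$ is still $\ge\lambda$, and in the limit case it makes $\lambda$ itself a legal next time limit for the controller to choose. Without stability, $\lambda$ could equal or exceed $\tlimbound-1$, and the successor case would only reach $\tlimbound-2<\lambda$, yielding no contradiction. The positionality of strategies and the recursive identification of a configuration $(\tilde\tlim,q')$ with the initial configuration of $\mathbf{G}[q',\tilde\tlim]$ (used for the tail argument) are routine but should be stated explicitly.
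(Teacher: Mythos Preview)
Your proof is correct and follows essentially the same approach as the paper's: assume a successor $q'$ with ordinal label $\lambda$, use stability to get $\lambda<\tlimbound-1$, and derive a contradiction with the fact that $\sigma_\pl$ wins $\mathbf{G}[q,\tlimbound-1]$. Your version is in fact more careful than the paper's on one point: the paper simply asserts that the configuration $(\lambda,q')$ follows $(\tlimbound-1,q)$, which is only literally true when $\tlimbound-1$ is a limit ordinal (so the controller can lower to $\lambda$); your explicit case split, landing at $(\tlimbound-2,q')$ in the successor case and then invoking Proposition~\ref{the: Winning time labels}(2)(i) with $\tilde\tlim\ge\lambda$, cleanly handles both cases.
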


\begin{proof}
Suppose that $\mathcal{L}_\pl(q)=\win$ and $Q\subseteq\St$ is forced by $\tau_\pl^\infty(q)$. Let $\sigma_\pl$ be a strategy for which $l(q,\sigma_\pl)=\win$ and $\tau_\pl^\infty(q)=\sigma_\pl(\tlimbound\!-\!1,q)$ (such a strategy exists by Definition~\ref{def: Canonical strategy of the non-controller}). For the sake of contradiction, suppose that there is some $q'\in Q$ for which $\mathcal{L}_\pl(q')\neq\win$ and thus $\mathcal{L}_\pl(q')=\tlim$ for some $\tlim<\tlimbound$. Since $\tlimbound\!-\!1$ is stable for $\mathbf{G}$, we must also have $\tlim<\tlimbound\!-\!1$. Now there is a play of $\mathbf{G}[q,\tlimbound\!-\!1]$ in which $\pl$ is using $\sigma_\pl$ and the configuration $(\tlim,q')$ follows $(\tlimbound\!-\!1,q)$. But since $\mathcal{L}_\pl(q')=\tlim$, the strategy $\sigma_\pl$ cannot be a winning strategy in $\mathbf{G}[q',\tlim]$. Hence $\sigma_\pl$ is not a winning strategy in $\mathbf{G}[q,\Gamma - 1]$, which is a contradiction since $l(q,\sigma_\pl)=\win$.
\end{proof}

The following proposition shows that when the time limit bound $\tlimbound$ is stable, then bounded embedded games become essentially equivalent to unbounded embedded games.
For the proof, we use concepts that we have defined before. The key here is to use a canonically timed strategy when $\pl=\ctr$ and an $\infty$-canonical strategy when $\pl\neq\ctr$.
\begin{proposition}\label{the: Bounded vs. unbounded} 
Let $\mathbf{G}=\embgame(\vrf,\ctr,A,\initst,\psi_{\ctr},\psi_{\octr},)$ be an embedded game and let $\pl\in\{{\bf A},{\bf E}\}$. Suppose $\tlimbound$ is stable for $\mathbf{G}$. Then the following equivalences hold---under the exceptional assumption that the winning condition of the exit positions of the unbounded embedded game $\mathbf{G}$  (recall Definition~\ref{def: Winning strategies in embedded game}) is defined by using the semantics $\models_\tlimbound^g$ instead of $\models_u^g$ \footnote{However, we will see (by Theorem~\ref{the: Bounded semantics for a stable time limit bound vs. unbounded semantics}) that when $\tlimbound$ is globally stable for $\mathcal{M}$, then these two notions of truth become equivalent, and thus this exceptional assumption here becomes irrelevant. (Also note that the exceptional assumption can only be considered relevant  in situations where another ``nested'' embedded game can be played after playing $\mathbf{G}$).}.
\begin{itemize}[leftmargin=*]
\item If ${\pl}=\ctr$, then there is a winning strategy $\sigma_\pl$ in $\mathbf{G}$ iff there is $\initlim<\tlimbound$ and a timed winning strategy $(\sigma_\pl',t)$ in $\mathbf{G}[\initlim]$.

\item If ${\pl}\neq\ctr$, then there is a winning strategy $\sigma_\pl$ in $\mathbf{G}$ iff there is $\sigma_\pl'$ which is a winning strategy in $\mathbf{G}[\initlim]$ for every $\initlim<\tlimbound$.
\end{itemize}
\end{proposition}
\begin{proof}
Suppose first that ${\pl}=\ctr$. If $(\sigma_{\pl}',t)$ is a timed winning strategy in $\mathbf{G}[\initlim]$ for some time limit $\initlim<\tlimbound$, then by Proposition~\ref{the: Canonical strategy}, $(\tau_\pl,t_{can})$ is a timed winning strategy in $\mathbf{G}[\initlim]$. Now the strategy $\tau_\pl$ will also be a winning strategy in $\mathbf{G}$ (note that infinite plays are impossible with $\tau_\pl$ and that $\tau_\pl$ depends on states only). 

For the other direction, suppose that there is a winning strategy $\sigma_{\pl}$ for $\pl$ in $\mathbf{G}$. We assume, for the sake of contradiction, that $\pl$ does not have a timed winning strategy in $\mathbf{G}[\initlim]$ for any $\initlim<\tlimbound$. Hence, by Proposition~\ref{the: Winning time labels}, we have $\mathcal{L}_{\pl}(\initst)=\lose$. As consequence of Proposition~\ref{the: Correspondence of the winning time labels}, we must now have $\mathcal{L}_{\opl}(\initst)=\win$.
Let $\tlimbound':=\tlimbound\!+\!1$. We  construct an $\infty$-canonical strategy $\tau_\opl^\infty$ for $\opl$ by using the strategies that correspond to embedded games with the time limit bound $\tlimbound'$. Since $\tlimbound$ is stable, the winning time labels of the states will not change when $\tlimbound$ is increased and, in particular, the state $q$ will still have the value $\win$ for $\opl$. Now, the assumptions of Lemma~\ref{the: Preserving the label win} hold, and thus we can use it to deduce that all (finite) plays of $\mathbf{G}$ with $\tau_\opl^\infty$ will end at a state that has the label $\win$ for $\opl$. But in order to lose $\mathbf{G}$, the player $\opl$ should end up at a state with the label $0$. Hence $\tau_\opl^\infty$ must be a winning strategy for $\opl$ in $\mathbf{G}$ (note here that $\tau_\pl^\infty$ depends on states only and thus may be used in $\mathbf{G}$). This is a contradiction since we assumed that $\pl$ has a winning strategy in $\mathbf{G}$.


Suppose then that ${\pl}\neq\ctr$. If there is a winning strategy $\sigma_{\pl}$ for $\pl$ in $\mathbf{G}$, then we can define $\sigma_{\pl}'(\gamma,q)=\sigma_\pl(q)$ for every $\gamma<\Gamma$, whence $\sigma_\pl'$ will be a winning strategy in $\mathbf{G}[\initlim]$ for every time limit $\initlim<\tlimbound$. 

For the other direction, suppose that there is a strategy $\sigma_\pl'$ which is a winning strategy in $\mathbf{G}[\initlim]$ for every time limit $\initlim<\tlimbound$. Then, by Proposition~\ref{the: Winning time labels}, we have $\mathcal{L}_{\pl}(\initst)=\win$. As above, we can now increase $\tlimbound$ to $\tlimbound+1$ in order to construct an $\infty$-canonical strategy $\tau_\pl^\infty$ for $\pl$ in $\mathbf{G}$. With the same reasoning as above, $\tau_\pl^\infty$ will be a winning strategy for $\pl$ in $\mathbf{G}$.
\end{proof}

By using Proposition~\ref{the: Bounded vs. unbounded}, we can now prove the equivalence of unbounded and $\tlimbound$-bounded game-theoretic semantics for a stable time limit bound $\tlimbound$.

\begin{theorem}\label{the: Bounded semantics for a stable time limit bound vs. unbounded semantics}
Let $\mathcal{M}$ be a \CGM, $q\in\St$ and $\varphi$ an \ATL-formula. Suppose that $\tlimbound>0$ is a globally stable time limit bound for $\mathcal{M}$. Then we have
\[\mathcal{M},q\models_u^g\varphi \, \text{ iff } \, \mathcal{M},q\models_{\tlimbound}^g\varphi.\]
\end{theorem}

\begin{proof}
We prove by induction on $\varphi$ that Eloise has a winning strategy in $\evalgame(\mathcal{M},q,\varphi)$ if and only if she has a winning strategy in $\evalgame(\mathcal{M},q,\varphi,\tlimbound)$. Since the rules for unbounded and bounded evaluation games differ only for positions that lead to embedded games, we only need to consider the cases where $\varphi=\psi\U\theta$ or $\varphi=\psi\Rl\theta$.

Let $\mathbf{G}$ be the unbounded embedded game that arises from $\varphi$. Since $\tlimbound$ is globally stable for the model $\mathcal{M}$, it is stable for $\mathbf{G}$. 
By the inductive hypothesis, the equivalence of the two semantics holds for the formulae $\psi$ and $\theta$. We may thus assume that the winning condition of the exit positions of $\mathbf{G}$ has been defined by using $\tlimbound$-bounded \GTS  instead of the unbounded \GTS (recall the corresponding assumption in Proposition~\ref{the: Bounded vs. unbounded}). Since $\tlimbound$-bounded evaluation games are determined (Proposition~\ref{the: Determinacy of the evaluation games}), it now follows that having a winning strategy in $\mathbf{G}$---or in $\mathbf{G}[\initlim]$ for some $\initlim<\tlimbound$---guarantees having a winning strategy in the corresponding evaluation game that is continued (recall Definition~\ref{def: Winning strategies in embedded game}).
The inductive step for $\varphi$ can thus be proven easily by using Proposition~\ref{the: Bounded vs. unbounded}.

If Eloise is the controller in $\mathbf{G}$, then by Proposition~\ref{the: Bounded vs. unbounded} she has a winning strategy in $\mathbf{G}$ if and only if there is some time limit $\tlim<\tlimbound$ such that she has a (timed) winning strategy in $\mathbf{G}[\tlim]$. And if Eloise is not the controller in $\mathbf{G}$, then by Proposition~\ref{the: Bounded vs. unbounded}, she has a winning strategy in $\mathbf{G}$ if and only if she has a winning strategy in $\mathbf{G}[\tlim]$ for every $\tlim<\tlimbound$ (chosen by the Abelard).
The claim for $\varphi$ thus follows.
\end{proof}

Since $\tlimbound$-bounded evaluation games are positionally determined for any time limit bound $\tlimbound>0$ (by Proposition~\ref{the: Determinacy of the evaluation games}), in particular they are determined for such $\tlimbound$ that are globally stable for a given model $\mathcal{M}$. Hence Theorem~\ref{the: Bounded semantics for a stable time limit bound vs. unbounded semantics} implies that unbounded evaluation games (and unbounded embedded games) are positionally determined too.
By these results, we see that even if we had defined memory-based strategies for bounded or unbounded evaluation games, the resulting semantics would have remain equivalent to the current one.

As a direct corollary of Theorem~\ref{the: Bounded semantics for a stable time limit bound vs. unbounded semantics} we obtain the equivalence of unbounded and bounded game-theoretic semantics.

\begin{corollary}\label{the: Equivalence of unbounded and bounded semantics}
Let $\mathcal{M}$ be a \CGM, $q\in\St$ and $\varphi$ an \ATL-formula. Then 
\[
	\mathcal{M},q\models_u^g\varphi \,\text{ iff }\, \mathcal{M},q\models_b^g\varphi.
\]
\end{corollary}

\begin{proof}
Assume that $\card(\mathcal{M})=\kappa$. By Corollary~\ref{cor: Stable time limit bound for a model}(1), the ordinal $2^\kappa$ is globally stable for $\mathcal{M}$ and thus the claim follows from Theorem~\ref{the: Bounded semantics for a stable time limit bound vs. unbounded semantics}.
\end{proof}

The proof above relies on the (global) stability of the time limit bound $\tlimbound=2^{\card(\mathcal{M})}$. We know by Proposition~\ref{the: Stable time limit bound for a model} that the time limit bound $\tlimbound=\rbb(\mathcal{M})$ would also suffice for this proof. But by Proposition~\ref{prop:exII}, for any time limit bound $\tlimbound<\rbb(\mathcal{M})$, there are cases when $\tlimbound$-bounded semantics is not equivalent with the unbounded \GTS. Therefore, we cannot use any \emph{fixed} value of $\tlimbound$ in order to obtain this equivalence.

Even though the finitely bounded semantics is not equivalent to the unbounded semantics (see Example ~\ref{ex: bounded vs. finitely bounded}),
the three semantics become equivalent on a natural class of models.
\begin{theorem}\label{the: Equivalence of finitely bounded and bounded semantics}
Let $\mathcal{M}$ be an image-finite \CGM, $q\in\St$ and $\varphi$ an \ATL-formula. Then 
\[
	\mathcal{M},q\models_f^g\varphi 
	\,\text{ iff }\, \mathcal{M},q\models_u^g\varphi 
	\,\text{ iff }\, \mathcal{M},q\models_b^g\varphi.
\]
\end{theorem}

\begin{proof}
Since $\mathcal{M}$ is image-finite, by Corollary~\ref{cor: Stable time limit bound for a model}(2), the ordinal $\omega$ is a globally stable time limit bound for $\mathcal{M}$ and thus the claim follows from Theorem~\ref{the: Bounded semantics for a stable time limit bound vs. unbounded semantics} and Corollary \ref{the: Equivalence of unbounded and bounded semantics}.
\end{proof}

Note that in image-finite models all ordinal-valued winning time labels are finite. Thus the controller would gain nothing from being able to use infinite ordinals in embedded games.

\section{Game-theoretic vs compositional semantics}
\label{sec:Comparing game-theoretic and compositional semantics}

\subsection{Equivalence between the unbounded GTS and the compositional semantics}

We show that the unbounded game-theoretic semantics for \ATL is equivalent to the standard compositional semantics of \ATL.
\begin{theorem}\label{the: Equivalence of unbounded compositional and game-theoretic semantics}
Let $\mathcal{M}$ be a \CGM, $\qzero\in\St$ and $\varphi$ an \ATL-formula. 
Then 
\[\mathcal{M},\qzero\models\varphi \,\text{ iff }\,  \mathcal{M},\qzero\models_{u}^g\varphi.\]
\end{theorem}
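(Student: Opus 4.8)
The plan is to prove the equivalence by structural induction on $\varphi$, with induction hypothesis that $\mathcal{M},q'\models\psi$ iff $\mathcal{M},q'\models_u^g\psi$ for every proper subformula $\psi$ of $\varphi$ and every $q'\in\St$. The atomic and Boolean cases are routine. For $\varphi=p$ the initial position is itself an ending position, so $\mathbf{E}$ wins iff $q\in v(p)$. For $\varphi=\neg\psi$ I would invoke the determinacy of unbounded evaluation games (Proposition~\ref{the: Determinacy of the evaluation games}), which yields $\mathcal{M},q\models_u^g\neg\psi$ iff $\mathcal{M},q\not\models_u^g\psi$; together with the hypothesis this matches the compositional clause. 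For $\varphi=\psi\vee\theta$ the verifier $\mathbf{E}$ chooses a disjunct, so she wins iff she wins one of the subgames, i.e.\ (by hypothesis) iff $\psi$ or $\theta$ holds. For $\varphi=\coop{A}\X\psi$ the one-step game $\step(\mathbf{E},A,q)$ lets $\mathbf{E}$ fix a tuple in $\avact(A,q)$ before $\mathbf{A}$ answers for $\overline A$; hence $\mathbf{E}$ wins iff some choice for $A$ forces the successor into the $\psi$-region, which by the hypothesis is exactly the compositional condition, since a collective strategy for a single step is just a choice of actions at $q$.

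The real content lies in the until and release cases, which I would analyse through their embedded games, using the notion of winning an embedded game (Definition~\ref{def: Winning strategies in embedded game}) and the facts that embedded-game strategies are positional and that a winning strategy of the \emph{controller} admits no infinite play. For $\varphi=\coop{A}\psi\U\theta$ the relevant game is $\mathbf{G}=\embgame(\mathbf{E},\mathbf{E},A,q,\theta,\psi)$, where $\mathbf{E}$ is both verifier and controller. For the direction ($\Leftarrow$), given a compositional witness $S_A$ I would let $\mathbf{E}$ play positionally $\sigma_\mathbf{E}(q')=S_A(q')$ while $q'\notin\|\theta\|$ and claim $\theta$ (end) at every state of $\|\theta\|$, where $\|\cdot\|$ denotes the truth set (the same under both semantics by hypothesis). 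Along any resulting play the states form a prefix of an $S_A$-path, which by the witness reaches $\|\theta\|$ in finitely many steps with $\psi$ holding strictly before; so no infinite play occurs, and at whichever exit position the game stops $\mathbf{E}$ verifies a true formula: she claims $\theta$ at the first $\theta$-state, where controller priority lets her move first, while any earlier ending by $\mathbf{A}$ lands at a pre-$\theta$ state where $\psi$ holds.

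For ($\Rightarrow$), from a positional winning strategy $\sigma_\mathbf{E}$ for the controller $\mathbf{E}$ in $\mathbf{G}$ I would read off $S_A$, using $\sigma_\mathbf{E}(q')$ as the action tuple where it is one and an arbitrary action where $\sigma_\mathbf{E}$ instructs ending. For an arbitrary $S_A$-path $\Lambda$, consider the play in which $\mathbf{A}$ realizes $\Lambda$ and never ends early: since $\mathbf{E}$ is the controller and $\sigma_\mathbf{E}$ is winning, the play is finite and terminates with $\mathbf{E}$ claiming $\theta$ at some $\Lambda[i]$, where $\theta$ must be true (else $\mathbf{E}$ loses that exit position); and for each $j<i$, comparing with the variant play where $\mathbf{A}$ ends at $\Lambda[j]$ forces $\psi$ to hold at $\Lambda[j]$. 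Hence $\Lambda$ meets the $\U$-condition, so $S_A$ is a witness. The release case $\varphi=\coop{A}\psi\Rl\theta$ is dual: the game is $\embgame(\mathbf{E},\mathbf{A},A,q,\theta,\psi)$, in which $\mathbf{A}$ is the controller and therefore carries the finiteness obligation; here $\mathbf{A}$ may force the exit position $(\mathbf{E},q,\theta)$ and $\mathbf{E}$ may force $(\mathbf{E},q,\psi)$. I would run the same translation between $S_A$ and $\mathbf{E}$'s positional embedded-game strategy, arguing by contradiction in the direction that extracts a witness: if the $\Rl$-condition failed at some index, $\mathbf{A}$ would realize the path to that state and end there, defeating $\mathbf{E}$.

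The hard part will be the bookkeeping that matches ``ending the embedded game'' with indices along a path, and in particular making the \emph{strict} inequality $j<i$ in the $\U$/$\Rl$ clauses line up exactly with the controller-priority tie-breaking (rule~(i) before rule~(ii)): it is precisely this priority that makes the boundary state—where both players could end—behave correctly, and overlooking it would break one direction at that state. A secondary point needing care is the finiteness guarantee, i.e.\ why a winning strategy of the controller never produces an infinite play; I would take this from condition~(a) of Definition~\ref{def: Winning strategies in embedded game} rather than re-deriving it, and rely on positionality of embedded-game strategies so that $S_A$ can be defined directly, state by state.
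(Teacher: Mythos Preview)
Your proposal is correct and follows essentially the same route as the paper: structural induction, determinacy for negation, and for the strategic operators the direct translation between a collective strategy $S_A$ and Eloise's positional embedded-game strategy $\sigma_{\mathbf{E}}$ (claiming $\theta$ on $\|\theta\|$ and playing $S_A$ elsewhere for $\U$, dually for $\Rl$), with the controller's no-infinite-play condition supplying finiteness. One cosmetic slip: your labels $(\Leftarrow)$ and $(\Rightarrow)$ are swapped relative to the displayed equivalence, but the arguments themselves are in the right order and match the paper's.
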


\begin{proof}
(\textit{Sketch})
The equivalence can be proven by induction on $\varphi$. The cases $\varphi=p$ and $\varphi=\psi\vee\theta$ are easy. For the case $\varphi=\neg\psi$ we need to use the determinacy of the evaluation games from Proposition~\ref{the: Determinacy of the evaluation games}. In the cases for strategic operators $\coop{A}\X\psi$, $\coop{A}\psi\U\theta$ and $\coop{A}\psi\Rl\theta$, we construct a strategy $\sigma_\pl$ for the embedded (or one step) game using the collective strategy $S_A$ and vice versa.
For a detailed proof, see the appendix. 
\end{proof}

By Theorem~\ref{the: Equivalence of unbounded and bounded semantics} we immediately obtain the following corollary. 
\begin{corollary}
Let $\mathcal{M}$ be a \CGM, $\qzero\in\St$ and $\varphi$ an \ATL-formula. 
Then 
\[\mathcal{M},\qzero\models\varphi \,\text{ iff }\,  \mathcal{M},\qzero\models_{b}^g\varphi.\]
\end{corollary}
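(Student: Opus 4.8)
The plan is to obtain the claim by simply composing the two equivalences already established, since the bounded game-theoretic truth is sandwiched between the compositional and the unbounded game-theoretic truth. Concretely, I would read off the chain
\[
\mathcal{M},\qzero\models\varphi \;\Longleftrightarrow\; \mathcal{M},\qzero\models_u^g\varphi \;\Longleftrightarrow\; \mathcal{M},\qzero\models_b^g\varphi,
\]
where the left biconditional is exactly Theorem~\ref{the: Equivalence of unbounded compositional and game-theoretic semantics} and the right biconditional is Theorem~\ref{the: Equivalence of unbounded and bounded semantics}.

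First I would invoke Theorem~\ref{the: Equivalence of unbounded compositional and game-theoretic semantics} to replace the compositional truth $\models$ by the unbounded game-theoretic truth $\models_u^g$. Then I would apply Theorem~\ref{the: Equivalence of unbounded and bounded semantics} to pass from $\models_u^g$ to the bounded semantics $\models_b^g$. Since each cited result is a biconditional, traversing the chain in both directions yields both implications of the desired equivalence, so no separate forward/backward argument is required and nothing further is needed.

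There is essentially no obstacle at this level: all the genuine work has already been carried out in the two cited theorems. The substance of Theorem~\ref{the: Equivalence of unbounded compositional and game-theoretic semantics} lies in translating between an explicitly announced collective strategy $S_A$ and a step-by-step embedded-game strategy $\sigma_\pl$ (together with the use of determinacy of evaluation games for the negation case), while the substance of Theorem~\ref{the: Equivalence of unbounded and bounded semantics} rests on the global stability of the time limit bound $2^{\card(\mathcal{M})}$ via Corollary~\ref{cor: Stable time limit bound for a model} and on Proposition~\ref{the: Bounded vs. unbounded} relating bounded to unbounded embedded games. Once these are available, the corollary is nothing more than transitivity of the two biconditionals, exactly as flagged by the remark that it is obtained \emph{immediately} from Theorem~\ref{the: Equivalence of unbounded and bounded semantics}.
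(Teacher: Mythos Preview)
Your proposal is correct and matches the paper's own proof exactly: the paper states that the corollary is obtained immediately from Theorem~\ref{the: Equivalence of unbounded and bounded semantics} (combined with the just-proved Theorem~\ref{the: Equivalence of unbounded compositional and game-theoretic semantics}), which is precisely the chain of biconditionals you wrote down.
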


However, as we have discussed in Section~\ref{ssec: Unbounded vs. bounded}, $\tlimbound$-bounded \GTS is not equivalent with the unbounded \GTS for any fixed value of the time limit bound $\tlimbound$. Therefore the different values of $\tlimbound$ lead to different semantic systems that are all non-equivalent to the standard compositional semantics.

\subsection{Finitely bounded compositional semantics}
\label{ssec: Finitely bounded compositional semantics}

As shown earlier (Example~\ref{ex: bounded vs. finitely bounded}), the finitely bounded game-theoretic semantics is not equivalent to the standard compositional semantics of \ATL. However, it can still be shown equivalent to a natural semantics,
to be defined next, which we call \emph{finitely bounded compositional semantics}.

\begin{definition}\label{finboundcomposemantics}
Let $\mathcal{M} = (\Agt, \St, \Prop, \Act, d, o, v)$ be a \CGM, $q\in\St$ and $\varphi$ an \ATL-formula. The truth of $\varphi$ in $\mathcal{M}$ at $q$ according to \defstyle{finitely bounded compositional semantics}, denoted by $\mathcal{M},q\models_{f}\varphi$, is defined recursively as follows:
\begin{itemize}[leftmargin=*]
\item The truth conditions for $p\in\Prop$, $\neg\psi$, $\psi\vee\theta$ and $\coop{A} \X\psi$ are as in the standard compositional semantics of \ATL (see Definition~\ref{def: compositional semantics for ATL}).
\item $\mathcal{M},q\models_{f}\coop{A}\psi \U\theta$ iff there exists $n<\omega$ and a collective strategy $S_A$ such that for each $\Lambda\in\paths(q,S_A)$, there is some $i\leq n$\, such that $\mathcal{M},\Lambda[i]\models_{f}\theta$ and $\mathcal{M},\Lambda[j]\models_{f}\psi$ for every $j < i$.
\item $\mathcal{M},q\models_{f}\coop{A}\psi \Rl\theta$ iff for every $n<\omega$, there exists  a collective strategy $S_{A,n}$ such that for each $\Lambda\in\paths(q,S_{A,n})$ and $i\leq n$, either $\mathcal{M},\Lambda[i]\models_{f}\theta$ or there is $j < i$ such that $\mathcal{M},\Lambda[j]\models_{f}\psi$.
\end{itemize}
\end{definition}
For $\F$ and $\G$ we obtain the following derived truth conditions:
\begin{itemize}[leftmargin=*]
\item $\mathcal{M},q\models_b\coop{A} \F\psi$ iff there exists $n<\omega$ and  a collective strategy $S_A$ such that for each $\Lambda\in\paths(q,S_A)$ there is $i\leq n$\, such that $\mathcal{M},\Lambda[i]\models_{f}\psi$.
\item $\mathcal{M},q\models_{f}\coop{A} \G\psi$ iff for every  $n\geq 0$\, there exists  a collective strategy $S_{A,n}$ such that for each $\Lambda\in\paths(q,S_{A,n})$ and $i\leq n$ we have $\mathcal{M},\Lambda[i]\models_{f}\psi$.
\end{itemize}
There exists an interesting conceptual link between
\emph{for-loops and while-loops} on one
hand  and the finitely bounded and unbounded semantics of \ATL on the other hand.
This link is perhaps easiest to
understand via the game-theoretic approaches to \ATL semantics, but it ultimately
makes no difference whether compositional or game-theoretic semantics is considered.
The principal difference between the two types of loops is that for-loops require an
input parameter, let us call it an \emph{iterator}, which determines how many times the
loop is executed. The iterator can be an input from elsewhere in the program or
even a fixed constant (although in the latter case, it is not 
clear whether the construct in question should be regarded as a genuine loop).
Similarly, in the finitely bounded semantics, the execution of strategic formulae
requires a finite input integer which limits the number of steps to be taken in the model and is
clearly analogous to the iterator. While-loops and the unbounded semantics are
similarly analogous to each other, both being essentially iterative processes but without an
iterator bounding the number of executions.


%
It turns out that the finitely bounded compositional semantics simplifies, in a sense, the meaning of the temporal operators $\atlu$ and $\atlr$, and in particular of $\atlf$ and $\atlg$, by reducing them to simple infinitary disjunctions and conjunctions of iteration patterns using 
$\atlx$ (see below). The iteration 
patterns correspond to finite approximations of the respective fixed point
characterisations (cf. \cite{GorDrim06}). Recall that the difference between finitely bounded and
the standard compositional semantics is essentially
described by Example~\ref{ex: bounded vs. finitely bounded}.  

To characterize the finitely bounded semantics, we now
define series of 'iteration operators'. For the sake of better illustration, we will only treat the cases of $\atlg$ and $\atlu$ here.

In what follows $A$ denotes an arbitrary coalition of agents.  
First, we define the following operators on \ATL formulae: 
\begin{itemize}[leftmargin=*]
\item $\mathbf{G}_{A;\theta}(\varphi) := \theta \land \coop{A}\X \varphi$ 
\item  $\mathbf{U}_{A;\psi,\theta}(\varphi) :=  \theta \lor (\psi \land \coop{A}\X \varphi)$ 
\end{itemize}
%
%
Now, we define the following recursively over $n \in \bbN$: 
\begin{itemize}[leftmargin=*]
\item $\mathbf{G}^{0}_{A}(\theta) := \theta$;  \ 
$\mathbf{G}^{n+1}_{A}(\theta) := \mathbf{G}_{A;\theta}(\mathbf{G}^{n}_{A}(\theta))$  

\item $\mathbf{U}^{0}_{A}(\psi,\theta) := \theta$; \  
$\mathbf{U}^{n+1}_{A}(\psi, \theta) := \mathbf{U}_{A;\psi,\theta}(\mathbf{U}^{n}_{A}(\psi,\theta))$.  
\end{itemize}

\begin{lemma} 
\label{lem1}
For every \CGM $\mathcal{M}$ and $q\in \mathcal{M}$, the following hold: 

\begin{enumerate}[leftmargin=*]
\item 
$\mathcal{M},q\models_{f}\coop{A}\G  \theta$ \ iff \ 
 $\mathcal{M},q\models_{f} \mathbf{G}^{n}_{A}(\theta)$ for every $n \in \bbN$.

Furthermore, if $\theta$ only contains occurrences of
strategic operators of the type $\coop{B}\atlx$, \\ then  \quad
 $\mathcal{M},q\models_{f} \coop{A}\atlg \theta $ \ iff \ 
 $\mathcal{M},q\models \mathbf{G}^{n}_{A}(\theta)$ for every $n \in \bbN$.

\item 
$\mathcal{M},q\models_{f}\coop{A}\psi \U \theta$ \ iff \  $\mathcal{M},q\models_{f} \mathbf{U}^{n}_{A}(\psi, \theta)$ for some $n \in \bbN$.

Furthermore, if $\psi$ and $\theta$ only contain occurrences of strategic operators of the type $\coop{B}\atlx$, then \quad
$\mathcal{M},q\models_{f} \coop{A}\psi\atlu \theta$ \ iff \
$\mathcal{M},q\models \mathbf{U}^{n}_{A}(\psi, \theta)$ for some $n \in \bbN$.

\end{enumerate}
\end{lemma}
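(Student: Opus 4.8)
The plan is to reduce each of the ``for every $n$'' / ``for some $n$'' characterisations in claims (1) and (2) to a \emph{uniform, per-$n$} equivalence between the finite approximation formula and the existence of a single positional strategy with a bounded horizon, and then to dispatch the ``furthermore'' parts by a separate structural induction on the $\coop{B}\X$-fragment. For claim (2) the heart of the matter is the auxiliary statement $(\dagger_n)$: for every state $q$, $\mathcal{M},q\models_f \mathbf{U}^n_A(\psi,\theta)$ iff there is a collective strategy $S_A$ such that for every $\Lambda\in\paths(q,S_A)$ there is $i\leq n$ with $\mathcal{M},\Lambda[i]\models_f\theta$ and $\mathcal{M},\Lambda[j]\models_f\psi$ for all $j<i$. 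Granting $(\dagger_n)$, claim (2) first part is immediate, since unfolding Definition~\ref{finboundcomposemantics} of $\models_f\coop{A}\psi\U\theta$ says exactly that such an $n$ and $S_A$ exist. The analogous statement $(\star_n)$ for claim (1) reads: $\mathcal{M},q\models_f\mathbf{G}^n_A(\theta)$ iff there is $S_A$ with $\mathcal{M},\Lambda[i]\models_f\theta$ for all $\Lambda\in\paths(q,S_A)$ and all $i\leq n$; combined with the derived truth condition for $\coop{A}\G\theta$ recorded after Definition~\ref{finboundcomposemantics}, this yields claim (1) first part.

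To prove $(\star_n)$ and $(\dagger_n)$ I would work with the sets $W_m := \{\,q \mid \mathcal{M},q\models_f \mathbf{U}^m_A(\psi,\theta)\,\}$ (respectively $\{\,q\mid \mathcal{M},q\models_f\mathbf{G}^m_A(\theta)\,\}$) together with the one-step $A$-controllable predecessor operator $\mathsf{Pre}_A(X):=\{\,q\mid A \text{ can force the next state into } X\,\}$. The recursion defining $\mathbf{U}$ gives $W_{m+1}=\{\theta\text{-states}\}\cup(\{\psi\text{-states}\}\cap\mathsf{Pre}_A(W_m))$, an increasing chain, while the recursion defining $\mathbf{G}$ gives $W_{m+1}=\{\theta\text{-states}\}\cap\mathsf{Pre}_A(W_m)$, a decreasing chain (monotonicity of the chains follows from monotonicity of $\mathsf{Pre}_A$). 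The direction ``strategy $\Rightarrow$ formula'' is then an easy induction on $n$ that simply reads off the one-step action $S_A(q)$ as a witness for the outermost $\coop{A}\X$ and applies the induction hypothesis at the successors (the tail of a positional $S_A$ being $S_A$ itself).

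The converse direction ``formula $\Rightarrow$ single strategy'' is the main obstacle: the nested $\coop{A}\X$ operators in $\mathbf{U}^n_A$ and $\mathbf{G}^n_A$ each grant a \emph{fresh} one-step choice at every successor, and these must be fused into one positional strategy. I would resolve this exactly as in the standard \ATL argument for reachability and safety, namely by a rank-based positional strategy: assign to each relevant state its rank in the chain $(W_m)_m$ and, at a state of rank $m\geq 1$, play an action forcing the successor into $W_{m-1}$ (such an action exists because the current state lies in $\mathsf{Pre}_A(W_{m-1})$). For $\mathbf{U}$ this drives the rank down to $0$, i.e.\ to a $\theta$-state, within at most $n$ steps while keeping $\psi$ true en route; for $\mathbf{G}$ it keeps the rank at least $n-i$ after $i$ steps, hence inside $W_0$, i.e.\ among the $\theta$-states, for every $i\leq n$.

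Finally, the ``furthermore'' parts follow from the observation that $\models_f$ and the standard semantics $\models$ are defined by identical clauses for proposition symbols, Boolean connectives and $\coop{B}\X$, differing only on $\U$ and $\Rl$. Hence a routine induction on formula structure shows $\mathcal{M},q\models_f\chi \Leftrightarrow \mathcal{M},q\models\chi$ for every $\chi$ whose only strategic operators are of the form $\coop{B}\X$. Since $\mathbf{G}^n_A(\theta)$ and $\mathbf{U}^n_A(\psi,\theta)$ are built from $\theta$ (respectively from $\psi,\theta$) using only $\land$, $\lor$ and $\coop{A}\X$, they lie in this fragment whenever $\theta$ (respectively $\psi,\theta$) do, so the two semantics agree on them. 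The only step requiring genuine care is the combining construction described above; the rest is bookkeeping along the recursions defining $\mathbf{G}^n_A$ and $\mathbf{U}^n_A$.
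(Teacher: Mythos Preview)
The paper itself does not prove this lemma; immediately after stating it, the authors write that ``Lemma~\ref{lem1} \ldots\ will be proved in a forthcoming follow-up work.'' So there is no proof in the paper to compare your proposal against.

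That said, your proposal is correct and follows exactly the route one would expect. The key reduction to the per-$n$ statements $(\dagger_n)$ and $(\star_n)$ is right, and once these are established the main claims follow directly from Definition~\ref{finboundcomposemantics} (and the derived clause for $\coop{A}\G$). Your treatment of the only nontrivial direction---fusing the nested one-step witnesses into a single positional strategy via the rank in the monotone chain $(W_m)_m$---is the standard \ATL argument and goes through without difficulty here; just be explicit that for the $\mathbf{G}$-case the rank at a state should be taken as the \emph{largest} $m\leq n$ with $q\in W_m$ (so that forcing into $W_{m-1}$ lets the rank drop by at most one per step), whereas for the $\mathbf{U}$-case it is the \emph{least} such $m$. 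The ``furthermore'' clauses are indeed a trivial structural induction: the two semantics share identical clauses on propositions, Booleans and $\coop{B}\X$, and $\mathbf{G}^n_A(\theta)$, $\mathbf{U}^n_A(\psi,\theta)$ add only $\land$, $\lor$, $\coop{A}\X$ on top of $\theta$ (respectively $\psi,\theta$).
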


The equivalences above suggest how to modify the axioms characterising these operators provided in \cite{GorDrim06}, viz. to replace the biconditionals 

\smallskip
 \textbf{FP$_{\atlg}$:}
$(\varphi\wedge\llangle A\rrangle \X\llangle A\rrangle \G\varphi)\leftrightarrow
\llangle A\rrangle \G\varphi$
\ \ \ and 

\smallskip
\textbf{FP$_{\atlu}$:}
$\llangle A\rrangle \varphi \U\psi \leftrightarrow (\psi\vee(\varphi\wedge\llangle A\rrangle \X\llangle A\rrangle \varphi \U\psi))$ 

\smallskip
by the respective implications 
\smallskip

\textbf{PreFP$_{\atlg}$:}
$(\varphi\wedge\llangle A\rrangle \X\llangle A\rrangle \G\varphi)\rightarrow
\llangle A\rrangle \G\varphi$ \ \ \ and 

\smallskip
\textbf{PostFP$_{\atlu}$:}
$\llangle A\rrangle \varphi \U\psi\rightarrow(\psi\vee(\varphi\wedge\llangle A\rrangle \X\llangle A\rrangle \varphi \U\psi))$.

\medskip 

Lemma \ref{lem1} has also partly been discussed in the follow-up work \cite{FBS-ATL-2018}, where we present a sound and complete infinitary axiomatization for the finitely bounded compositional semantics of \ATL
by adding respective infinitary axiom schemes and inference rules.

\subsection{Equivalence between the finitely bounded compositional and game-theoretic semantics}

To prove equivalence between the finitely bounded compositional semantics and the finitely bounded \GTS, we  need to show that it is sufficient to consider strategies in the embedded games that depend on states only. This property will be needed because collective strategies for coalitions ($S_A$) in the compositional semantics of \ATL formulae are positional (according to Definition~\ref{def: compositional semantics for ATL}).

\begin{lemma}
\label{the: Time uniform strategies}
If Eloise has a winning strategy $\strev_{\bf E}$ in a finitely bounded evaluation game $\evalgame(\mathcal{M}, \qzero, \varphi, \omega)$, then she has a winning strategy $\strev_{\bf E}'$ which uses, in every bounded embedded game, a strategy $\sigma_{\bf E}$ that depends only on states.
\end{lemma}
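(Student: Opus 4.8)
The plan is to keep Eloise's choices at the non-embedded positions essentially as they are (choosing, at each step, any move that keeps her in a winning position), and to replace the substrategy she plays inside each embedded game by a \emph{canonical} strategy, which by construction depends only on states. Concretely, consider a position $\pos = (\pl, q, \coop{A}\chi_1\mathsf{T}\chi_2)$ with $\mathsf{T}\in\{\U, \Rl\}$, giving rise to the embedded game $\mathbf{G} = \embgame(\vrf, \ctr, A, q, \chi_2, \chi_1)$. If Eloise is the controller of $\mathbf{G}$, then in $\strev_{\bf E}'$ she announces a finite initial time limit (as she must in a finitely bounded game) and plays the canonical strategy $\tau_{\bf E}$ together with the canonical timer $t_{can}$; by Definition~\ref{def: Canonical strategy of the controller}, $\tau_{\bf E}$ depends only on states (and since only finite time limits occur, the timer is in fact vacuous). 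If Eloise is not the controller, then Abelard first announces a finite time limit $n<\omega$, and $\strev_{\bf E}'$ has her respond with the $n$-canonical strategy $\tau_{\bf E}^n$, which again depends only on states.

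To show $\strev_{\bf E}'$ is winning, I would establish the invariant that every position $(\pl, q, \psi)$ reached in a play consistent with $\strev_{\bf E}'$ satisfies $\mathcal{M}, q \models_f^g \psi$ iff $\pl = {\bf E}$; by the determinacy of evaluation games (Proposition~\ref{the: Determinacy of the evaluation games}) this is exactly the statement that the position is winning for Eloise. The invariant holds at the initial position $({\bf E}, \qzero, \varphi)$ because Eloise has a winning strategy, hence $\mathcal{M}, \qzero \models_f^g \varphi$. For the rules governing $\neg$, $\vee$ and $\coop{A}\X$ the invariant is preserved by the semantic clauses of $\models_f^g$: whenever Eloise moves she has, and takes, a choice leading to a position that still satisfies the invariant, and whenever her opponent moves every available move leads to such a position. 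The decisive case is a position giving rise to an embedded game $\mathbf{G}$, where the invariant tells us that Eloise has a winning strategy in $\mathbf{G}$ --- a timed winning strategy in some $\mathbf{G}[\gamma_0]$, $\gamma_0<\omega$, when she is the controller, and a winning strategy in $\mathbf{G}[n]$ for each announced $n$ otherwise.

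The main obstacle is precisely to verify that the canonical substrategies remain winning in $\mathbf{G}$ (so that they, too, drive the play to exit positions satisfying the invariant), and this is where the earlier machinery enters. When Eloise is the controller, Lemma~\ref{the: Canonical strategy}(1) upgrades her timed winning strategy to the canonically timed strategy $(\tau_{\bf E}, t_{can})$, which is again a timed winning strategy in $\mathbf{G}[\gamma_0]$. When she is not the controller, the finiteness of the time limit bound $\tlimbound = \omega$ is crucial: it guarantees that the announced $n$ is finite, so Lemma~\ref{the: n-canonical strategy} applies and turns her winning strategy in $\mathbf{G}[n]$ into the state-dependent winning strategy $\tau_{\bf E}^n$. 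By Definition~\ref{def: Winning strategies in embedded game}, a winning strategy in $\mathbf{G}$ ensures that every reachable exit position $(\vrf, q', \psi)$ satisfies $\mathcal{M}, q' \models_f^g \psi \Leftrightarrow \vrf = {\bf E}$, which is exactly the invariant there. Thus the invariant propagates through each embedded game, and at an ending position $(\pl, q, p)$ it reads $q\in v(p)$ iff $\pl = {\bf E}$, i.e., Eloise wins. Consequently $\strev_{\bf E}'$ is a winning strategy all of whose embedded-game substrategies depend only on states, as required; note that $\strev_{\bf E}'$ may still depend on the announced $n$ when \emph{selecting} which $\tau_{\bf E}^n$ to use, which is allowed since only the substrategies inside the embedded games must be state-dependent.
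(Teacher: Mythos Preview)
Your proof is correct and follows essentially the same approach as the paper: replace Eloise's embedded-game substrategies by the canonical strategy $\tau_{\bf E}$ when she is the controller (via Lemma~\ref{the: Canonical strategy}) and by the $n$-canonical strategy $\tau_{\bf E}^n$ when she is not (via Lemma~\ref{the: n-canonical strategy}). Your explicit invariant argument using determinacy makes precise what the paper leaves as ``easy to see,'' but the underlying structure is the same.
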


\begin{proof}
In summary, the idea is that if $\bf E\!=\!\ctr$ in some embedded game, Eloise may play with the canonical strategy $\tau_{\bf E}$ that depends only on states. If $\bf E\!\neq\!\ctr$ and Abelard chooses $n<\omega$ as the time limit, Eloise may play with the $n$-canonical strategy $\tau_{\bf E}^n$ that only depends on states. A more detailed explanation follows.

Suppose that $\strev_{\bf E}$ is a winning strategy for Eloise in a finitely bounded evaluation game $\evalgame(\mathcal{M}, \qzero, \varphi, \omega)$. 
Let $\mathbf{G}=\embgame(\vrf,\ctr,A,\initst,\psi_{\ctr},\psi_{\octr})$ be an embedded game that is related to some position $\pos$ in the evaluation game $\evalgame(\mathcal{M}, \qzero, \varphi, \omega)$.  
Suppose first that ${\bf E}=\ctr$. Then $\strev_{\bf E}(\pos)=(n,\sigma_{\bf E})$ for some $n<\omega$ (the timer $t$ is not used in the finitely bounded case, and thus it can be omitted here). Since $\strev_{\bf E}$ is a winning strategy in the evaluation game, $\sigma_{\bf E}$ must be winning strategy in the embedded game $\mathbf{G}[n]$. By Proposition~\ref{the: Canonical strategy}, the canonical strategy $\tau_{\bf E}$ is a winning strategy in $\mathbf{G}[n]$ (the canonical timer is not needed here).
Suppose now that ${\bf E}\neq\ctr$. Then $\strev_{\bf E}(\pos)$ maps every $n<\omega$ to some strategy $\sigma_{{\bf E},n}$. Since $\strev_{\bf E}$ is a winning strategy in the evaluation game, $\sigma_{{\bf E},n}$ must be winning strategy in the corresponding bounded embedded game $\mathbf{G}[n]$. By Lemma~\ref{the: n-canonical strategy}\,, for every $n<\omega$, the $n$-canonical strategy $\tau_{\bf E}^n$ is a winning strategy in $\mathbf{G}[n]$.

Thus, it is easy to see that we can construct a strategy $\strev_{\bf E}'$ for Eloise in such a way that it only uses canonical strategies when ${\bf E=\ctr}$ and maps all $n<\omega$ to $n$-canonical strategies when ${\bf E}\neq\ctr$. Since these strategies depend on states only, the claim now follows for $\strev_{\bf E}'$.
\end{proof}

Using the previous lemma, we can now prove the equivalence between the finitely
bounded compositional and game-theoretic semantics using a similar induction as the one in the proof of Theorem~\ref{the: Equivalence of unbounded
compositional and game-theoretic semantics}.
See the proof in the appendix.

\begin{theorem}\label{the: Equivalence of finitely bounded compositional and game-theoretic semantics}
Let $\mathcal{M}$ be a \CGM, $\qzero\in\St$ and $\varphi$ an \ATL-formula.  Then 
\[\mathcal{M},\qzero\models_{f}\varphi \,\text{ iff }\,  \mathcal{M},\qzero\models_{f}^g\varphi.\] 
\end{theorem}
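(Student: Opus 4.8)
The plan is to prove the biconditional by structural induction on $\varphi$, closely following the induction in the proof of Theorem~\ref{the: Equivalence of unbounded compositional and game-theoretic semantics}, but everywhere replacing the unbounded evaluation game by the finitely bounded game $\evalgame(\mathcal{M},\qzero,\varphi,\omega)$ and the standard compositional clauses by those of Definition~\ref{finboundcomposemantics}. The atomic case $\varphi=p$ is immediate from the ending-position rule, and $\varphi=\psi\vee\theta$ is immediate from the disjunction rule together with the induction hypothesis. For $\varphi=\neg\psi$ I would invoke the determinacy of finitely bounded evaluation games (Proposition~\ref{the: Determinacy of the evaluation games}, part~2, with $\tlimbound=\omega$), which yields $\mathcal{M},q\models_f^g\neg\psi$ iff $\mathcal{M},q\not\models_f^g\psi$; combined with the induction hypothesis and the compositional clause for negation this closes the case. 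Throughout the strategic cases I would use Lemma~\ref{the: Time uniform strategies}, so that Eloise's substrategies in the embedded games may be assumed to depend only on states, which is exactly what is needed to convert them into positional collective strategies $S_A$ (and conversely, positional $S_A$ can be fed back as state-dependent substrategies).

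The case $\varphi=\coop{A}\X\psi$ reduces to a single one step game: a winning choice of actions for $A$ against all responses of $\overline A$ corresponds exactly to a positional $S_A$ for which every $\Lambda[1]$ lies in the set where, by the induction hypothesis, $\psi$ holds in both semantics. The crucial observation for the remaining two cases is that the controller's announcement of a finite ordinal $n$ matches precisely the finite quantifier appearing in Definition~\ref{finboundcomposemantics}. For $\varphi=\coop{A}\psi\U\theta$ Eloise is both verifier and controller, so her announced $n$ realises the existential ``there exists $n<\omega$''. In the direction $\models_f\Rightarrow\models_f^g$, Eloise announces the uniform $n$ from the compositional witness, plays the positional $S_A$ as her $A$-actions, and claims $\theta$ at the first $\theta$-state reached (which, by hypothesis, occurs within $n$ steps on every play); if Abelard instead ends early claiming $\neg\psi$ at some $\Lambda[j]$, then $j$ precedes that point, so $\psi$ holds there and Eloise wins the continuation by the induction hypothesis. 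For $\models_f^g\Rightarrow\models_f$, I extract $S_A$ from the canonical (state-dependent) winning substrategy of Lemma~\ref{the: Canonical strategy}; along the play in which Abelard merely steps, the time-out rule forces Eloise to claim $\theta$ within $n$ steps at a genuine $\theta$-state, and for each earlier step the option of Abelard claiming $\neg\psi$ there (Eloise having priority but not stopping yet) forces $\psi$ to hold at that state by the induction hypothesis.

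For $\varphi=\coop{A}\psi\Rl\theta$ the roles flip: Eloise is verifier but Abelard is the controller, so it is Abelard who announces $n$, which realises the \emph{universal} ``for every $n<\omega$'' together with the per-$n$ collective strategies $S_{A,n}$ of Definition~\ref{finboundcomposemantics}; here I would use the $n$-canonical strategies $\tau_{\bf E}^n$ of Lemma~\ref{the: n-canonical strategy}, one for each announced $n$. In $\models_f\Rightarrow\models_f^g$, given Abelard's $n$, Eloise plays $S_{A,n}$ and releases (claims $\psi$) at the first $\psi$-state; the compositional condition guarantees that $\theta$ holds everywhere strictly before the first $\psi$-state, so every challenge of $\theta$ by Abelard (who has priority) lands on a state where $\theta$ holds, and the time-out rule can only force $\theta$ at such a state, whence Eloise wins by the induction hypothesis. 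In $\models_f^g\Rightarrow\models_f$, I argue contrapositively: if some $\Lambda\in\paths(q,S_{A,n})$ violated the release condition at a step $i\leq n$, Abelard could force $\Lambda$ by choosing the matching $\overline A$-responses, note that Eloise cannot profitably release at any $j<i$ (since $\psi$ fails there, by the induction hypothesis), and then challenge $\theta$ at $\Lambda[i]$ (or let the timer expire), where $\theta$ fails---contradicting that Eloise wins.

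I expect the release case to be the main obstacle, for two intertwined reasons. First, the quantifier alternation is genuinely different from the until case: because the \emph{opponent} controls the clock, the finite bound is universally quantified and Eloise is allowed a fresh strategy for each announced $n$, so the argument must carefully track that $n$ is fixed by Abelard before Eloise commits her substrategy. Second, the rule ordering gives the controller priority to end before the non-controller, so in the release game Abelard may challenge $\theta$ \emph{before} Eloise can release on $\psi$; the proof must use the release semantics (which forces $\theta$ to hold up to and including the first $\psi$-point) to ensure that such a premature challenge is nonetheless harmless, and must likewise handle the time-out rule, which forces Eloise to defend $\theta$ at the terminal state. Once these timing issues are reconciled with the induction hypothesis, the remaining bookkeeping is routine, and the derived operators $\coop{A}\F$ and $\coop{A}\G$ follow as the special cases $\psi=\top$ and $\theta=\bot$.
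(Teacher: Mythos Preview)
Your proposal is correct and follows essentially the same approach as the paper's proof: a structural induction on $\varphi$, with the Boolean and $\coop{A}\X$ cases handled exactly as in Theorem~\ref{the: Equivalence of unbounded compositional and game-theoretic semantics}, and the $\U$ and $\Rl$ cases handled by matching the controller's announced finite $n$ with the finite quantifier in Definition~\ref{finboundcomposemantics}, using Lemma~\ref{the: Time uniform strategies} to pass between state-only substrategies and positional collective strategies $S_A$ (respectively $S_{A,n}$). Your separate invocations of Lemma~\ref{the: Canonical strategy} and Lemma~\ref{the: n-canonical strategy} are already absorbed into Lemma~\ref{the: Time uniform strategies}, so they are redundant but harmless; and in the $\Rl$ forward direction note that $\theta$ in fact holds up to and \emph{including} the first $\psi$-state, which is what makes Abelard's priority challenge at that very state harmless.
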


As we have seen in Example~\ref{ex: bounded vs. finitely bounded}, the fixed point characterisation of the temporal operator $\F$ fails with finitely bounded semantics (both compositional and game-theoretic). But since finitely bounded \GTS is equivalent with the standard
semantics in image-finite models (and in particular, finite models), the formula $\neg\coop{A} \F p \wedge (p \lor \coop{A} \X\coop{A} \F p)$ is satisfied only in infinite models. Therefore we obtain the following result:

\begin{proposition}
\ATL with finitely bounded semantics lacks the finite model property.
\end{proposition}

It is well-known that \ATL with the standard semantics has the finite model property. By this fact, and the equivalence of the finitely bounded and the standard semantics in finite models, we obtain the following interesting consequence.

\begin{corollary}
All validities of \ATL with finitely bounded semantics are also valid with the standard semantics.
\end{corollary}

\subsection*{Concluding remarks}
\label{sec: Conclusion}

Game-theoretic approaches have proved to be a natural and fruitful idea in many areas of logic. 
The principal aim of this paper has been to develop a conceptual and technical framework for
systems of game-theoretic semantics for \ATL. 
%
%
%
The game-theoretic perspective appears in the setting of \ATL on two semantic levels: on the object level, in the standard semantics of the strategic operators, and on the meta-level, where game-theoretic logical semantics is applied to formulae. We have unified these two perspectives in the semantic evaluation games designed here for \ATL and argued that the resulting versions of game-theoretic semantics are conceptually and technically natural from logical as well as game-theoretic perspectives. Our bounded \GTS provides a new framework where truth of \ATL-formulae can be established in finite time---even on infinite models---despite the fact that \ATL-formulae involve temporal operators that refer to infinite computations.

The ideas and results in the present work can be naturally extended to other richer languages. In particular, the recent paper \cite{GKR-AAMAS2017} introduces  \GTS for the extended logic \ATLplus. Because of the more expressive language of  \ATLplus, which generally requires memory-based strategies, the \GTS for that logic is considerably more complex, while formally extending the \GTS for \ATL developed here.
%
Another recent paper, \cite{mucalc}, introduces a bounded semantics for
the modal $\mu$-calculus, directly inspired by the approach in the current paper
based on ordinals. This leads to games alternative to the usual parity games
where the durations of plays are always guaranteed to be finite. 
However, \cite{mucalc} discusses the case for standard modalities rather than the 
strategic operators used in the current article. 
%

The finitely bounded compositional semantics for \ATL and other expressive branching time logics, naturally emerging from the present work, is of particular interest for several reasons and deserves a separate study of its own. As a first step in that direction, in a recent work \cite{GorankoKR17}, we have studied in more detail the finitely bounded semantics for the computational tree logic \CTL (which can be seen as a single-agent variant of \ATL).  In particular, we have presented there sound and complete tableaux systems for checking the validities of \CTL with finitely bounded semantics, 
one of which is infinitary, while the other is finitary and provides an EXPTIME decision procedure for the satisfiability problem for that logic. Thus we establish EXPTIME-completess of the logic, the same complexity as that of standard \CTL. In \cite{FBS-ATL-2018}, we extend these results and also provide an infinitary complete axiomatization for \ATL with finitely bounded semantics.

We will now briefly address two important issues on how the present work links to other research.
The first issue is---as already pointed out in the introduction---that some of our technical results could have alternatively been derived using some general results for coalgebraic modal logic. This is because concurrent game models can be viewed as coalgebras for a \emph{game functor} defined  in \cite{vene2011}, and the fixed-point extension of the coalitional coalgebraic modal logic for this functor links to \ATL in a natural way.
Game-theoretic semantics has been developed for coalgebraic fixed-point logics in \cite{vene2006,pattinson2009,vene2010}
and could indeed be used to obtain some of our results concerning the unbounded game-theoretic semantics. However, as general and powerful that approach is, it would not be very helpful for readers not familiar with coalgebras and coalgebraic modal logic, so the more direct and self-contained approach adopted in the present paper has its benefits.
Moreover, our work on bounded and finitely bounded semantics is not directly related to existing work in coalgebraic modal logic, though even there 
some natural shortcuts based on background theory could have been used.
For example, using K\"{o}nig's Lemma, it is easy to prove that the finitely bounded and unbounded game-theoretic semantics are equivalent on image-finite models.

The second issue concerns links to alternating Turing-machines and automata, 
and some possible applications. In addition to defining variants of \GTS for \ATL, we have also identified a number of related useful concepts such as, e.g., $n$-canonical and $\infty$-canonical strategies as well as winning time labels for embedded games. All these could be useful also outside the context of \ATL. In particular, we believe that the finitely bounded game-theoretic semantics can be used to connect different kinds of extensions of \ATL to alternating Turing-machines (and alternating automata) as well as different kinds of games. Recalling that the size of the model domain is a sufficient initial time limit for semantic games in finite models, it is easy to see that evaluation games directly translate in polynomial time to equivalent alternating reachability games, thereby providing a direct proof of the $\mathrm{PTIME}$ upper bound of the model
checking problem of \ATL. A similar approach works also in the context of various extensions of \ATL, such as  \ATLplus, as demonstrated in \cite{GKR-AAMAS2017}, where some known and some new results on the complexity of model checking \ATLplus and some of its fragments have been obtained by using these links.

\subsubsection*{Acknowledgments}
The work of Valentin Goranko was supported by a research grant 2015-04388 of the Swedish Research Council.
The work of Antti Kuusisto was supported by the ERC grant 647289 ``CODA."
We thank the anonymous reviewers for helpful comments and references.

\bibliographystyle{plain}
\bibliography{VG-ATL}

\begin{thebibliography}{10}

\bibitem{AlechinaBLN15}
Natasha Alechina, Nils Bulling, Brian Logan, and Hoang~Nga Nguyen.
\newblock On the boundary of (un)decidability: Decidable model-checking for a
  fragment of resource agent logic.
\newblock In {\em {IJCAI 2015}}, pages 1494--1501, Palo Alto, California USA,
  2015. AAAI Press.

\bibitem{AlechinaLNR11}
Natasha Alechina, Brian Logan, Nguyen~Hoang Nga, and Abdur Rakib.
\newblock Logic for coalitions with bounded resources.
\newblock {\em J. Log. Comput.}, 21(6):907--937, 2011.

\bibitem{atva/AlmagorHK10}
Shaull Almagor, Yoram Hirshfeld, and Orna Kupferman.
\newblock Promptness in omega-regular automata.
\newblock In Ahmed Bouajjani and Wei-Ngan Chin, editors, {\em Proc of ATVA
  2010}, volume 6252 of {\em LNCS}, pages 22--36. Springer, 2010.

\bibitem{AHK02}
R.~Alur, T.~A. Henzinger, and O.~Kupferman.
\newblock Alternating-time temporal logic.
\newblock {\em J. ACM}, 49(5):672--713, 2002.

\bibitem{toplas/AlurH98}
Rajeev Alur and Thomas~A. Henzinger.
\newblock Finitary fairness.
\newblock {\em {ACM} Trans. Program. Lang. Syst.}, 20(6):1171--1194, 1998.

\bibitem{aamas/AminofMMR16}
Benjamin Aminof, Vadim Malvone, Aniello Murano, and Sasha Rubin.
\newblock Graded strategy logic: Reasoning about uniqueness of nash equilibria.
\newblock In {\em Proc. of AAMAS 2016}, pages 698--706. IFAAMAS Publ., 2016.

\bibitem{aaai/CermakLM15}
Petr Cerm{\'{a}}k, Alessio Lomuscio, and Aniello Murano.
\newblock Verifying and synthesising multi-agent systems against one-goal
  strategy logic specifications.
\newblock In Blai Bonet and Sven Koenig, editors, {\em Proc. of {AAAI} 2015},
  pages 2038--2044. {AAAI} Press, 2015.

\bibitem{tocl/ChatterjeeHH09}
Krishnendu Chatterjee, Thomas~A. Henzinger, and Florian Horn.
\newblock Finitary winning in omega-regular games.
\newblock {\em {ACM} Trans. Comput. Log.}, 11(1):1:1--1:27, 2009.

\bibitem{pattinson2009}
Corina C{\^{\i}}rstea, Clemens Kupke, and Dirk Pattinson.
\newblock {EXPTIME} tableaux for the coalgebraic
  \emph{{\(\mathrm{\mu}\)}}-calculus.
\newblock In {\em Proc. of {CSL} 2009, LNCS 5771}, pages 179--193, Berlin
  Heidelberg, 2009. Springer-Verlag.

\bibitem{vene2011}
Corina C{\^{\i}}rstea, Alexander Kurz, Dirk Pattinson, Lutz Schr{\"{o}}der, and
  Yde Venema.
\newblock Modal logics are coalgebraic.
\newblock {\em Comput. J.}, 54(1):31--41, 2011.

\bibitem{jutlawhatever}
E.~Allen Emerson and Charanjit~S. Jutla.
\newblock The complexity of tree automata and logics of programs (extended
  abstract).
\newblock In {\em 29th Annual Symposium on Foundations of Computer Science,
  White Plains, New York, USA, 24-26 October 1988}, pages 328--337, 1988.

\bibitem{EmersonMSS92}
E.~Allen Emerson, Aloysius~K. Mok, A.~Prasad Sistla, and Jai Srinivasan.
\newblock Quantitative temporal reasoning.
\newblock {\em Real-Time Systems}, 4(4):331--352, 1992.

\bibitem{vene2010}
Ga{\"{e}}lle Fontaine, Raul~Andres Leal, and Yde Venema.
\newblock Automata for coalgebras: An approach using predicate liftings.
\newblock In {\em Proc. of {ICALP} 2010, LNCS 6199}, pages 381--392, Berlin
  Heidelberg, 2010. Springer-Verlag.

\bibitem{GorDrim06}
V.~Goranko and G.~van Drimmelen.
\newblock Complete axiomatization and decidablity of {A}lternating-time
  temporal logic.
\newblock {\em Theor. Comp. Sci.}, 353:93--117, 2006.

\bibitem{GKR-AAMAS2016}
Valentin Goranko, Antti Kuusisto, and Raine R{\"{o}}nnholm.
\newblock Game-theoretic semantics for alternating-time temporal logic.
\newblock In {\em Proc. of AAMAS 2016}, pages 671--679. IFAAMAS Publ., 2016.

\bibitem{GorankoKR17}
Valentin Goranko, Antti Kuusisto, and Raine R{\"{o}}nnholm.
\newblock {CTL} with finitely bounded semantics.
\newblock In {\em Proc. of {TIME} 2017}, volume~90 of {\em LIPIcs}, pages
  14:1--14:19. Schloss Dagstuhl - Leibniz-Zentrum fuer Informatik, 2017.

\bibitem{GKR-AAMAS2017}
Valentin Goranko, Antti Kuusisto, and Raine R{\"{o}}nnholm.
\newblock Game-theoretic semantics for {ATL+} with applications to model
  checking.
\newblock In {\em Proc. of AAMAS 2017}, pages 1277--1285. IFAAMAS Publ., 2017.

\bibitem{FBS-ATL-2018}
Valentin Goranko, Antti Kuusisto, and Raine R\"onnholm.
\newblock Alternating-time temporal logic {ATL} with finitely bounded
  semantics.
\newblock 2018.

\bibitem{mucalc}
Lauri Hella, Antti Kuusisto, and Raine R{\"{o}}nnholm.
\newblock Bounded game-theoretic semantics for modal mu-calculus.
\newblock {\em CoRR}, abs/1706.00753, 2017.

\bibitem{hinti}
Jaakko Hintikka.
\newblock {\em Logic, Language-games and Information: Kantian Themes in the
  Philosophy of Logic}.
\newblock Clarendon Press, Oxford, 1973.

\bibitem{sandu}
Jaakko Hintikka and Gabriel Sandu.
\newblock Informational independence as a semantical phenomenon.
\newblock In J.~E. Fenstad, editor, {\em Logic, Methodology and Philosophy of
  Science VIII}, pages 571--589. North-Holland, Amsterdam, 1989.

\bibitem{HintikkaSandu97}
Jaakko Hintikka and Gabriel Sandu.
\newblock Game-theoretical semantics.
\newblock In J.~van Benthem and A.~ter Meulen, editors, {\em Handbook of Logic
  and Language}, pages 361--410. Elsevier, Amsterdam, 1997.

\bibitem{Kupferman2007}
Orna Kupferman, Nir Piterman, and Moshe~Y. Vardi.
\newblock {\em From Liveness to Promptness}, volume 4590 of {\em LNCS}, pages
  406--419.
\newblock Springer, 2007.

\bibitem{lore}
Paul Lorenzen.
\newblock Ein dialogisches konstruktivit\"{a}tskriterium.
\newblock In Andrzej Mostowski, editor, {\em Proceedings of the Symposium on
  Foundations of Mathematics, Warsaw 1959}, pages 193--200. Panstwowe
  wydawnictwo naukowe, Warsaw, 1961.

\bibitem{aamas/MogaveroMS14}
Fabio Mogavero, Aniello Murano, and Luigi Sauro.
\newblock Strategy games: a renewed framework.
\newblock In Ana L.~C. Bazzan, Michael~N. Huhns, Alessio Lomuscio, and Paul
  Scerri, editors, {\em Proc. of {AAMAS} '14}, pages 869--876. {IFAAMAS/ACM},
  2014.

\bibitem{lpar/MogaveroMS13}
Fabio Mogavero, Aniello Murano, and Loredana Sorrentino.
\newblock On promptness in parity games.
\newblock In Kenneth~L. McMillan, Aart Middeldorp, and Andrei Voronkov,
  editors, {\em Proc. of LPAR-19}, volume 8312 of {\em Lecture Notes in
  Computer Science}, pages 601--618. Springer, 2013.

\bibitem{MonicaNP11}
Dario~Della Monica, Margherita Napoli, and Mimmo Parente.
\newblock On a logic for coalitional games with priced-resource agents.
\newblock {\em Electr. Notes Theor. Comput. Sci.}, 278:215--228, 2011.

\bibitem{cedric_1966}
Cedrick Smith.
\newblock Graphs and composite games.
\newblock {\em J. Combin. Theory}, 1:51--81, 1966.

\bibitem{vene2006}
Yde Venema.
\newblock Automata and fixed point logic: {A} coalgebraic perspective.
\newblock {\em Inf. Comput.}, 204(4):637--678, 2006.

\end{thebibliography}

\section*{APPENDIX: SOME TECHNICAL PROOFS}
\label{sec:Appendix}

\medskip

\setcounter{section}{1}

\subsection*{Proof of Lemma~\ref{the: Regular strategies}}

We define the strategy $\sigma_\pl'$ for any
configuration $(\tlim,q)$, where $\tlim<\tlimbound$ and $q\in\St$, as follows.
\begin{itemize}[leftmargin=*]
\item If $(\sigma_\pl,t)$ is not a timed winning strategy in $\mathbf{G}[q,\tlim']$ for any $\tlim'\leq\tlim$, set $\sigma_\pl'(\tlim,q) = \void$.	
\item Else, set $\sigma_\pl'(\tlim,q) = \sigma_\pl(\tlim',q)$, where $\tlim'\leq\tlim$ is the smallest ordinal such that $(\sigma_\pl,t)$ is a timed winning strategy in $\mathbf{G}[q,\tlim']$.
\end{itemize}
We define the timer $t'$ for any pair $(\tlim,q)$, where $\tlim<\tlimbound$ is a
limit ordinal and $q\in\St$, in the following way.
\begin{itemize}[leftmargin=*]
\item If $(\sigma_\pl,t)$ is not a timed winning strategy in $\mathbf{G}[q,\tlim']$ for any $\tlim'<\tlim$, set $t'(\tlim,q) = \void$.
\item Else $t'(\tlim,q)=\tlim'$, where $\tlim'$ is any ordinal such that  $\tlim'<\tlim$ and $(\sigma_\pl,t)$ is a timed winning strategy in $\mathbf{G}[q,\tlim']$.
\end{itemize}
We prove by transfinite induction on $\tlim<\tlimbound$ for every $q\in\St$ that if $(\sigma_\pl,t)$ is a timed winning strategy in $\mathbf{G}[q,\delta]$ for some $\delta\leq\tlim$, then $(\sigma_\pl',t')$ is a timed winning strategy in $\mathbf{G}[q,\tlim]$. The claim then follows. 

Let the inductive hypothesis be that the claim holds for every $\tlim'<\tlim$ and suppose that $(\sigma_\pl,t)$ is a timed winning strategy in $\mathbf{G}[q,\delta]$ for some $\delta\leq\tlim$. Let $\delta'<\tlimbound$ be the smallest ordinal such that  $(\sigma_\pl,t)$ is a timed winning strategy in $\mathbf{G}[q,\delta']$. Now $\delta'\leq\delta\leq\tlim$ and $\sigma_\pl'(\tlim,q)=\sigma_\pl(\delta',q)$.

Suppose first that $\delta'=0$, whence $(\sigma_\pl,t)$ is a timed winning strategy in $\mathbf{G}[q,0]$. Now $\sigma_\pl'(\tlim,q)=\sigma_\pl(0,q)=\psi_\ctr$ and thus $(\sigma_\pl',t')$ is a timed winning strategy in $\mathbf{G}[q,\tlim]$.
Suppose then that $\delta'>0$, whence we must have $\sigma_\pl(\delta',q)\neq\psi_\ctr$. Let $q'\in\St$ be any possible successor state of $q$ when $\pl$ follows $\sigma_\pl(\delta',q)$.
Now $(\sigma_\pl,t)$ must be a timed winning strategy in
$\mathbf{G}[q',\delta'']$ for some ordinal $\delta''<\delta'$ (if $\delta'$ is a limit ordinal, then $\delta''=t(\delta',q')$, and if $\delta'$ is a successor ordinal, then $\delta''\!=\delta'\!-\!1$). Since $\delta'\leq\tlim$, we have $\delta''<\tlim$.

For the inductive step, suppose first that $\tlim$ is a successor ordinal. 
Since we have $\delta''<\tlim$, we infer by the inductive hypothesis that $(\sigma_\pl',t')$ is a timed winning strategy in $\mathbf{G}[q',\tlim\!-\!1]$. Hence we see that $(\sigma_\pl',t')$ must be a timed winning strategy in $\mathbf{G}[q,\tlim]$.
Suppose then that $\tlim$ is a limit ordinal. 
Since $(\sigma_\pl,t)$ is a timed winning strategy in $\mathbf{G}[q',\delta'']$, the value of $t'(\tlim,q')$ is defined such that $t'(\tlim,q')<\tlim$ and $(\sigma_\pl,t)$ is a timed winning strategy in $\mathbf{G}[q',t'(\tlim,q')]$. Thus, by the inductive hypothesis, $(\sigma_\pl',t')$ is a timed winning strategy in $\mathbf{G}[q',t'(\tlim,q')]$. Hence we see that $(\sigma_\pl',t')$ must be a timed winning strategy in $\mathbf{G}[q,\tlim]$.
\qed

\subsection*{Proof of Proposition~\ref{the: Winning time labels}}

\noindent
1. We first consider  the case where ${\pl}=\ctr$. 

\smallskip
i) Suppose first that $\mathcal{L}_{\pl}(q)=\tlim<\tlimbound$. By Definition~\ref{def: Winning time labels} there is some strategy $\sigma_{\pl}$ whose strategy label $l(q,\sigma_{\pl})$ is $\tlim$. Thus there is some timer $t$ such that the pair $(\sigma_{\pl},t)$ is a timed winning strategy for $\pl$ in $\mathbf{G}[q,\tlim]$. By 
Lemma~\ref{the: Regular strategies} there is a pair $(\sigma_\pl',t')$ which is a timed winning strategy in $\mathbf{G}[q,\tlim']$ for any $\gamma'$ such that  $\tlim\leq\tlim'<\tlimbound$. 
If there existed some timed winning strategy $(\sigma_\pl'',t'')$ for $\pl$ in $\mathbf{G}[q,\tlim']$ for some $\tlim'<\tlim$, then we would have $l(q,\sigma_\pl'')\leq\tlim'$ and thus $\mathcal{L}_\pl(q)\leq\tlim'<\tlim$, which is a contradiction.

For the other direction, suppose that there is a pair $(\sigma_\pl,q)$ which is a timed winning strategy in $\mathbf{G}[q,\tlim']$ for any $\tlim'$ such that $\tlim\leq\tlim'<\tlimbound$, but there is no timed winning strategy for $\pl$ in $\mathbf{G}[q,\tlim']$ for any $\tlim'<\tlim$. Now $l(q,\sigma_\pl)=\tlim$, and for any other strategy $\sigma_\pl'$, we have either $l(q,\sigma_{\pl}')=\lose$ or $l(q,\sigma_{\pl}')\geq\tlim$. Hence the smallest ordinal value for the strategy labels at $q$ is the ordinal $\tlim$, and thus we have $\mathcal{L}_\pl(q)=\tlim$.

\smallskip
ii) If $\mathcal{L}_{\pl}(q)=\lose$, then $l(q,\sigma_{\pl})=\lose$ for every strategy $\sigma_{\pl}$ of $\pl$. Hence none of the strategy-timer pairs $(\sigma_\pl,t)$ is a timed winning strategy for $\pl$ in $\mathbf{G}[q,\tlim]$ for any $\tlim<\tlimbound$. Conversely, if there is no timed winning strategy $(\sigma_\pl,t)$ in $\mathbf{G}[q,\tlim]$ for any $\tlim<\tlimbound$, then we have $l(q,\sigma_{\pl})=\lose$ for every $\sigma_{\pl}$ and thus $\mathcal{L}_{\pl}(q)=\lose$.

\medskip

\noindent
2. We now consider the case where ${\pl}\neq\ctr$. 

\smallskip
i) If $\mathcal{L}_{\pl}(q)=\win$, then $l(q,\sigma_{\pl})=\win$ for some
strategy $\sigma_{\pl}$, i.e., $\sigma_{\pl}$ is a winning strategy in $\mathbf{G}[q,\tlim]$ for any time limit $\tlim<\tlimbound$. Conversely, if there is some $\sigma_{\pl}$ which is a winning strategy in $\mathbf{G}[q,\tlim]$ for every time limit $\tlim<\tlimbound$, then $l(q,\sigma_{\pl})=\win$ and thus $\mathcal{L}_{\pl}(q)=\win$.

\smallskip 
ii) Suppose that $\mathcal{L}_{\pl}(\initst)=\tlim<\tlimbound$, i.e., $\tlim$ is the supremum of the strategy labels $l(q,\sigma_{\pl})$. Suppose first that there is some $\sigma_{\pl}$ for which $l(\initst,\sigma_{\pl})=\tlim$. Now $\sigma_\pl$ is a winning strategy in $\mathbf{G}[q,\tlim']$ for any $\tlim'<\tlim$. Suppose then that there is no maximum value for the labels $l(q,\sigma_{\pl})$, whence $\tlim$ must be a limit ordinal. Let $\tlim'<\tlim$. Since $\tlim$ is the
least upper bound for the strategy labels $l(q,\sigma_{\pl})$, there must be
some strategy $\sigma_{\pl}'$ for which $l(q,\sigma_{\pl}')\geq\tlim'\!+\!1$, as otherwise $\tlim'\!+\!1$ would be a lower upper bound for the strategy labels.
We now observe that $\sigma_{\pl}'$ is a winning strategy in $\mathbf{G}[q,\tlim']$.

If there existed a winning
strategy $\sigma_{\pl}'$ for $\pl$ in $\mathbf{G}[q,\tlim']$ for some $\tlim'\geq\tlim$, then we would have $l(q,\sigma_{\pl}')>\tlim$, and thus $\mathcal{L}_{\pl}(q)>\tlim$. Hence there cannot be any winning strategy for ${\pl}$ in $\mathbf{G}[q,\tlim']$ for any $\tlim'$ such that $\tlim\leq\tlim'<\tlimbound$.

For the other direction, assume that for every $\tlim'<\tlim$, there exists a winning strategy for $\pl$ in $\mathbf{G}[q,\tlim']$, but there exists no winning strategy for ${\pl}$ in $\mathbf{G}[q,\tlim']$ for any $\tlim'$ such that $\tlim\leq\tlim'<\tlimbound$. 
If we had $\mathcal{L}_\pl(q)=\win$, we would end up with a contradiction by the (already proved) result concerning the label $\win$.
If we had $\tlim<\mathcal{L}_{\pl}(\initst)<\tlimbound$, then, by the (already proved) other direction of the current claim, there would have existed a winning strategy for $\pl$ in $\mathbf{G}[q,\tlim]$, which is a contradiction. 
And if we had $\mathcal{L}_{\pl}(q)<\tlim$, then, once again by the other direction of the current claim, there would not have been any winning strategy for $\pl$ in 
$\mathbf{G}[q,\mathcal{L}_\pl(q)]$, again a contradiction. 
Hence, the only possibility left is that $\mathcal{L}_{\pl}(\initst)=\tlim$.
\qed

\subsection*{Proof of Proposition~\ref{the: stability of labels}}

In order to simplify this proof, we will use the determinacy of $\tlimbound$-bounded evaluation games (Proposition~\ref{the: Determinacy of the evaluation games}). 
By observing our proofs, we can confirm that this does not lead to any circular argumentation. 
This is not surprising as Proposition~\ref{the: Determinacy of the evaluation games} is proven for an arbitrary time limit bound $\tlimbound>0$ without any assumptions on its stability. The claim of the current proposition (\ref{the: stability of labels}) is only needed for showing that certain time limit bounds are guaranteed to be stable (Propositions~\ref{prop: Stable bound for finite models} and \ref{the: Stable time limit bound for a model}). Note that by assuming the determinacy of bounded evaluation games, a player wins a bounded \emph{embedded} game if and only if (s)he has a winning strategy in the bounded \emph{evaluation} game that is continued (recall Definition~\ref{def: Winning strategies in embedded game}).

Let $\mathbf{G}$ be an (unbounded) embedded game that can occur within $\mathcal{G}(\mathcal{M},\qzero,\varphi)$. Now the corresponding bounded embedded games $\mathbf{G}[\tlim]$ (for different time limits $\tlim$) can occur within $\mathcal{G}_{\tlimbound}$ and $\mathcal{G}_{\tlimbound'}$. We need to show that the winning time labels for player $\pl$ in $\mathbf{G}$ are the same for all $q\in\St$ with respect to both $\tlimbound$ and $\tlimbound'$. 
We first show that any exit position of $\mathbf{G}$ is a winning position for $\pl$ in $\mathcal{G}_{\tlimbound}$ if and only if it is a winning position for $\pl$ in $\mathcal{G}_{\tlimbound'}$
(note that from this equivalence it follows that $\mathbf{G}$ has the same winning condition with respect to both $\tlimbound$ and $\tlimbound'$).

Let $\pos=(\pl',q,\psi)$ be a possible exit position of $\mathbf{G}$. Suppose first that $\pos$ is a winning position for $\pl$ in $\mathcal{G}_{\tlimbound'}$. By Proposition~\ref{the: Winning time labels}, the values of the winning time labels for the controller determine the lowest sufficiently large time limit for winning the corresponding embedded game---if there is one. 
We may thus assume that, within her/his winning strategy from $\pos$ in $\mathcal{G}_{\tlimbound'}$, the player $\pl$ always selects time limits corresponding to her/his winning time labels for the initial positions of the embedded games. Let $\Sigma_\pl$ denote such a strategy of $\pl$ for $\mathcal{G}_{\tlimbound'}$. Since, by the asumption, all the winning time labels of $\pl$ are below the time limit bound $\tlimbound$, it is possible for $\pl$ to follow $\Sigma_\pl$ from $\pos$ also within $\mathcal{G}_\tlimbound$ (note that the rules for $\mathcal{G}_{\tlimbound}$ and in $\mathcal{G}_{\tlimbound'}$ differ only when a time limit for an embedded game is chosen). It is now easy to see that $\Sigma_\pl$ a winning strategy from $\pos$ in $\mathcal{G}_{\tlimbound}$. 

Suppose then that $\pos$ is not a winning position for $\pl$ in $\mathcal{G}_{\tlimbound'}$. By the determinacy of bounded embedded games, $\pos$ must be a winning position for $\opl$ in $\mathcal{G}_{\tlimbound'}$. Hence, we can argue exactly as above, that $\pos$ is a winning position for $\opl$ in $\mathcal{G}_{\tlimbound}$ and thus it cannot be a winning position for $\pl$ in $\mathcal{G}_{\tlimbound}$.
We can thus conclude that $\pos$ is a winning position for $\pl$ in $\mathcal{G}_{\tlimbound}$ if and only it is a winning position for $\pl$ in $\mathcal{G}_{\tlimbound'}$.

By the equivalence that was proven above, it is easy to see by Definition~\ref{def: Winning time labels}, that the bounds $\tlimbound$ and $\tlimbound'$  cannot create two different \emph{ordinal valued} winning time labels for $\pl$ in $\mathbf{G}$. 
The remaining possibility is that there is a state $q\in\St$ which has a label $\lose/\win$ with the time limit bound $\tlimbound$ and some ordinal value $\tlim$ with the time limit bound $\tlimbound'$. But, again by Definition~\ref{def: Winning time labels}, we see that this is possible only if $\gamma\geq\tlimbound$, which would contradict the assumption that he winning time labels of $\pl$ in $\mathbf{G}$ with respect to $\tlimbound'$ are all strictly below $\tlimbound$. This concludes the proof.
\qed

\subsection*{Proof of Proposition~\ref{the: Canonical strategy}}

\noindent
1. We first consider the case where ${\pl}=\ctr$.
\smallskip

We will prove by transfinite induction on $\gamma<\tlimbound$
that for every $q\in\St$, if ${\pl}$ has a timed winning strategy in $\mathbf{G}[q,\tlim]$, then $(\tau_\pl,t_{can})$ is a timed winning strategy in $\mathbf{G}[q,\tlim]$.
Let the inductive hypothesis be that the
claim holds for every $\tlim'<\tlim$ and suppose that ${\pl}$ has a
timed winning strategy in $\mathbf{G}[q,\tlim]$.
By Proposition~\ref{the: Winning time labels}, we have $\mathcal{L}_\pl(q)=\delta$ for some $\delta\leq\tlim$. Let $\sigma_\pl'$ be a
strategy for which $\tau_\pl(q)=\sigma_\pl'(\delta,q)$ and there is a timer $t'$ such that $(\sigma_\pl',t')$ is a timed
winning strategy in $\mathbf{G}[q,\delta']$
for all $\delta'$ such that $\delta\leq\delta'<\tlimbound$
(such a strategy exists by the definition of the canonical strategy $\tau_{\pl}$).

Suppose first that $\delta=0$, whence $(\sigma_\pl',t')$ is a timed winning strategy in $\mathbf{G}[q,0]$. Now we have $\tau_\pl(q)=\sigma_\pl'(0,q)=\psi_\ctr$ and thus $(\tau_\pl,t_{can})$ is a timed winning strategy in $\mathbf{G}[q,\tlim]$.
Suppose then that $\delta>0$, whence $\sigma'_\pl(\delta,q)$ must be either some tuple of actions for the coalition $A$ or some response function for the coalition $\overline A$. Let $Q\subseteq\St$ be the set of states that is forced by $\sigma'_\pl(\delta,q)$ and let $q'\in Q$.  Since $(\sigma_\pl',t')$ is a timed winning strategy in $\mathbf{G}[q,\delta]$, there is $\delta'<\delta$ such that $(\sigma_\pl',t')$  is a timed winning strategy in $\mathbf{G}[q',\delta']$ (if $\delta$ is a limit ordinal, then $\delta'=t'(\delta,q')$, and else $\delta'=\delta-1$). Since $\delta\leq\tlim$, we have $\delta'<\tlim$.

For the inductive step, suppose first that $\tlim$ is a successor ordinal. Since we have $\delta'\leq\tlim\!-\!1$, there is a timed winning strategy $(\sigma_\pl'',t'')$ in $\mathbf{G}[q',\tlim\!-\!1]$ by Lemma~\ref{the: Regular strategies}. Thus, by the inductive hypothesis, $(\tau_\pl,t_{can})$ is a timed winning strategy in $\mathbf{G}[q',\tlim\!-\!1]$. Since this holds for every $q'\in Q$, we see that $(\tau_\pl,t_{can})$ is a timed winning strategy in $\mathbf{G}[q,\tlim]$.

Suppose now that $\tlim$ is a limit ordinal. Since $(\sigma_\pl',t')$  is a timed winning strategy in $\mathbf{G}[q',\delta']$, by Proposition~\ref{the: Winning time labels} we must have $\mathcal{L}_\pl(q')\leq\delta'<\tlim$. Thus, by the definition of the
canonical timer, $t_{can}(\tlim,q')=\mathcal{L}_\pl(q')$. By Proposition~\ref{the: Winning time labels}, there is a timed winning strategy $(\sigma_\pl'',t'')$ in $\mathbf{G}[q',\mathcal{L}_\pl(q')]$. Thus by the inductive hypothesis $(\tau_\pl,t_{can})$ is a timed winning strategy in $\mathbf{G}[q',\mathcal{L}_\pl(q')]$. Since this holds for every $q'\in Q$, and $t_{can}(\tlim,q')=\mathcal{L}_\pl(q')$ for every $q'\in Q$, we see that $(\tau_\pl,t_{can})$ is a timed winning strategy in $\mathbf{G}[q,\tlim]$.

\medskip

\noindent
2. We then consider the case where ${\pl}\neq\ctr$. 

\smallskip

We will prove by transfinite induction on $\gamma<\tlimbound$ that for every $q\in Q$: if ${\pl}$ has a winning strategy in $\mathbf{G}[q,\tlim]$, then $\tau_\pl$ is a winning strategy in $\mathbf{G}[q,\tlim]$. 
Suppose first that $\tlim=0$ and that ${\pl}$ has a winning strategy $\sigma_\pl$ in $\mathbf{G}[q,0]$. Now, since with the time limit $0$ the game will end at $q$ immediately, every strategy of $\pl$ will be a winning strategy in $\mathbf{G}[q,0]$. Hence, in particular, $\tau_\pl$ is a winning strategy in $\mathbf{G}[q,0]$.

For the inductive step, suppose that the claim holds for every $\tlim'<\tlim$ and that ${\pl}$ has a winning strategy $\sigma_\pl$ in $\mathbf{G}[q,\tlim]$. 
By Proposition~\ref{the: Winning time labels}, we have either $\mathcal{L}_\pl(q)=\win$ or $\mathcal{L}_\pl(q)>\tlim$. 

Assume first that $\mathcal{L}_\pl(q)=\win$. Let $\sigma_\pl$ be a strategy for which $l(q,\sigma_\pl)=\win$ and $\tau_\pl(\tlim,q)=\sigma_\pl(\tlim,q)$ (such a strategy exists by the definition of $\tau_\pl$).
Let $Q\subseteq\St$ be the set of states forced by $\sigma_\pl(\tlim,q)$ and let $q'\in Q$. 
Since $l(q,\sigma_\pl)=\win$, the strategy $\sigma_\pl$ is a winning strategy in $\mathbf{G}[q,\delta]$ for every $\delta<\tlimbound$, and therefore, as  we have $\tlim<\tlimbound$, the strategy $\sigma_\pl$ must also be a
winning strategy in $\mathbf{G}[q',\tlim']$ for every $\tlim'<\tlim$.
Thus there is a winning strategy in $\mathbf{G}[q',\tlim']$ for every $\tlim'<\tlim$ and
every $q'\in Q$. Hence, by the inductive hypothesis, $\tau_\pl$ is a
winning strategy in $\mathbf{G}[q',\tlim']$ for every $\tlim'<\tlim$ and $q'\in Q$.
Therefore we observe that $\tau_\pl$ is also a winning strategy in $\mathbf{G}[q,\tlim]$.
Assume now that $\mathcal{L}_\pl(q)=\delta>\tlim$. Let $\sigma_\pl$ be a strategy for which $l(q,\sigma_\pl)>\tlim$ and $\tau_\pl(\tlim,q)=\sigma_\pl(\tlim,q)$ (such a strategy exists by the definition of $\tau_\pl$). Let $Q\subseteq\St$ be the set of states that is forced by $\sigma_\pl(\tlim,q)$ and let $q'\in Q$. Since $\sigma_\pl$ is a winning strategy in $\mathbf{G}[q,\delta']$ for every $\delta' < l(q,\sigma_\pl)$
and since we have $\tlim < l(q,\sigma_\pl)$, the strategy $\sigma_\pl$ must also be a winning strategy in $\mathbf{G}[q',\tlim']$ for every $\tlim'<\tlim$. Hence we can deduce, as before, that $\tau_\pl$ is a winning strategy in
the games for the configurations over $Q$ that follow $(\gamma,q)$, and thus $\tau_{\pl}$ is also a winning strategy in $\mathbf{G}[q,\tlim]$.
\qed

\subsection*{Proof of Proposition~\ref{the: Correspondence of the winning time labels}}

We prove this claim by transfinite induction on the ordinal $\tlim<\tlimbound$. We first prove the special case where $\tlim=0$. Let $q\in\St$ and suppose first that $\mathcal{L}_{\ctr}(q)=0$, whence by Proposition~\ref{the: Winning time labels} the player $\ctr$ has a timed winning strategy $(\sigma_\ctr,t)$ in $\mathbf{G}[q,0]$. This is possible only if the exit position $(\ctr,q,\psi_\ctr)$ is a winning position for $\ctr$. In that case $\octr$
loses $\mathbf{G}[q,\gamma']$
with any time limit $\gamma'$ and thus $\mathcal{L}_{\octr}(q)=0$. Suppose then that $\mathcal{L}_{\octr}(q)=0$, whence
there no winning strategy for $\octr$ in $\mathbf{G}[q,0]$.
This is possible only if $\ctr$ wins at the exit position $(\ctr,q,\psi_\ctr)$, whence $\mathcal{L}_{\ctr}(q)=0$.
Now, let $\tlim>0$. The inductive hypothesis is that the claim holds for every ordinal $\tlim'<\tlim$.  
Suppose first that $\mathcal{L}_{\ctr}(q)=\tlim$. By Proposition~\ref{the: Canonical strategy} there is a timed winning strategy $(\sigma_\pl,t)$ in $\mathbf{G}[q,\tlim]$.
Hence $\octr$ cannot have a winning strategy in $\mathbf{G}[q,\gamma]$, and thus by Proposition~\ref{the: Winning time labels} we must have $\mathcal{L}_{\octr}(q)\leq\tlim$. If $\mathcal{L}_{\octr}(q)<\tlim$, then, by the inductive hypothesis,
we have $\mathcal{L}_{\ctr}(q)=\mathcal{L}_{\octr}(q)<\tlim$, which is a contradiction.
Therefore it must be that $\mathcal{L}_{\octr}(q)=\tlim$.
Suppose then that $\mathcal{L}_{\octr}(q)=\tlim$. We will next show that for any strategy $\sigma_{\octr}$ and any set $Q\subseteq\St$ forced by $\sigma_{\octr}(\tlim,q)$, there is a state $q'\in Q$ for which $\mathcal{L}_{\octr}(q')<\tlim$.
For the sake of contradiction, suppose that there is a strategy $\sigma_{\octr}$ such that for the set $Q\subseteq\St$ forced by $\sigma_{\octr}(\tlim,q)$, we have $\mathcal{L}_{\octr}(q')\geq\tlim$ 
or $\mathcal{L}_{\octr}(q') = \win$  for every $q'\in Q$. 
We construct the following strategy $\sigma_{\octr}'$ for $\octr$ in the embedded game $\mathbf{G}[q,\gamma]$:
\begin{align*}
	&\sigma_{\octr}'(\delta,q)=\sigma_{\octr}(\tlim,q) \text{ for every $\delta\leq\tlim$}, \\[-0,1cm]
	&\sigma_{\octr}'(\delta,q')=\tau_{\octr}(\delta,q') \text{ for every $\delta\leq\tlim$ and $q'\in\St\setminus\{q\}$.}
\end{align*}

Since $\mathcal{L}_{\octr}(q')\geq\tlim$ or $\mathcal{L}_{\octr}(q') = \win$ 
for every $q'\in Q$, by Proposition~\ref{the: Winning time labels}, the canonical strategy $\tau_{\octr}$ is a winning strategy in $\mathbf{G}[q',\delta]$ for any $q'\in Q$ and $\delta<\tlim$. Thus it is easy to see that $\sigma_{\octr}'$ is a winning strategy in $\mathbf{G}[q,\tlim]$. Hence, again by Proposition~\ref{the: Winning time labels},  we must have $\mathcal{L}_{\octr}(q)>\gamma$ or $\mathcal{L}_{\octr}(q) = \win$, which is a contradiction.
Therefore, we infer that 
\begin{align*}
	&\text{For any strategy $\sigma_{\octr}$ and $Q\subseteq\St$ forced by $\sigma_{\octr}(\tlim,q)$, }  \tag{$\star$}
	\\[-0,1cm]
	&\text{there is some $q'\in Q$ such that $\mathcal{L}_{\octr}(q')<\tlim$.}
\end{align*}
Let $Q':=\{q'\in\St \mid \mathcal{L}_{\octr}(q')<\tlim\}$. By the inductive hypothesis,  $\mathcal{L}_{\ctr}(q')<\tlim$ for every $q'\in Q'$.
We will show that $\ctr$ can play in such a way at $q$ that all possible
successor states will be in the set $Q'$.
Suppose first that $\ctr=\ovrf$. Since for every $\vec\alpha\in\avact(q,A)$, there is some strategy $\sigma_{\octr}$ such that  $\sigma_{\octr}(\tlim,q)=\vec\alpha$, we infer by ($\star$) that there is some response function for $\overline A$ which forces the next state to be in $Q'$.
Suppose then that $\ctr=\vrf$. If for every $\vec\alpha\in\avact(q,A)$ there existed some $\vec\beta\in\avact(q,\overline A)$ such that the outcome state of these actions was not in $Q'$, then there would be some strategy $\sigma_{\octr}$ such that the set forced by $\sigma_{\octr}(\tlim,q)$ would not intersect $Q'$. This is a contradiction with ($\star$), and thus there is some tuple of actions for $A$ at $q$ such that all possible successor states will be in $Q'$.

We next construct a strategy $\sigma_\ctr$ for $\ctr$ in $\mathbf{G}[q,\tlim]$. By the description above, we can define $\sigma_\ctr$ for every configuration $(\tlim',q)$, where $\tlim'\leq\tlim$, in such a way that the set forced by $\sigma_\ctr(\tlim',q)$ is a subset of $Q'$. For all configurations $(\tlim',q')$, where $\tlim'\leq\tlim$ and $q'\in\St\setminus\{q\}$, we define $\sigma_\ctr(\tlim',q')=\tau_\ctr(q')$.
Since $\mathcal{L}_\ctr(q')<\tlim$ for every $q'\in Q'$, 
we infer that  $t_{can}(\tlim,q')=\mathcal{L}_\ctr(q')<\tlim$ for every $q'\in Q$. By Propositions~\ref{the: Winning time labels} and \ref{the: Canonical strategy}\,, $(\tau_{\ctr},t_{can})$ is a timed winning strategy in $\mathbf{G}[q',t_{can}(\tlim,q')]$ for any $q'\in Q'$. Thus, it is easy to see that $(\sigma_{\ctr},t_{can})$ is a timed winning strategy in $\mathbf{G}[q,\tlim]$. However, since $\mathcal{L}_{\octr}(q)=\gamma$, 
we conclude, using the inductive hypothesis, that there cannot be a timed winning strategy for $\ctr$ in $\mathbf{G}[q,\tlim']$ for any $\tlim'<\tlim$. Hence by Proposition~\ref{the: Winning time labels} we must have $\mathcal{L}_{\ctr}(q)=\gamma$.
\qed

\subsection*{Proof of Proposition \ref{the: Determinacy of the evaluation games}}

Since both of the players cannot have a winning strategy, the implication from left to right is immediate. We prove the implication from right to left by induction on $\varphi$.
\begin{itemize}[leftmargin=*]
\item Suppose that $\varphi=p$ (where $p\in\Prop$).
By the rules of the game, it is easy to see that each ending position is a winning position for either of the players. The claim follows from this.

\item Suppose that $\varphi=\neg\psi$.
Since negation just swaps the verifier and falsifier for the position, this case follows directly from the inductive hypothesis.

\item Suppose that $\varphi=\psi\vee\theta$.
Suppose that $\opl$ does not have a winning strategy in $\mathcal{G}(\mathcal{M},q,\psi\vee\theta)$. 
Suppose first that $\opl={\bf A}$, whence it is not possible that both $({\bf E},q,\psi)$ and $({\bf E},q,\theta)$ are winning positions for $\opl$. Thus, by the inductive hypothesis, at least one of these positions must be a winning position for ${\bf E}=\pl$, and thus $\pl$ has a winning strategy in $\mathcal{G}(\mathcal{M},q,\psi\vee\theta)$.
Suppose then that $\opl={\bf E}$, whence neither of the positions $({\bf E},q,\psi)$ nor $({\bf E},q,\theta)$ can be a winning position for $\opl$. Thus, by the inductive hypothesis, both of these positions must be winning positions for ${\bf A}=\pl$, and thus $\pl$ has a winning strategy in $\mathcal{G}(\mathcal{M},q,\psi\vee\theta)$.

\item Suppose that $\varphi=\coop{A}\X\psi$.
This case can be proven by an analogous argument as the one used in the inductive step of Proposition~\ref{the: Correspondence of the winning time labels}. Note that the rule related to the position $({\bf E},q,\coop{A}\X\psi)$ is just a special case of the bounded embedded game $\embgame({\bf E},{\bf E},A,q,\psi,\bot)[1]$ with the restriction that neither of the players may end the game before the time runs out.

\item Suppose that $\varphi=\coop{A}\psi\U\theta$.
Let $\mathbf{G}$ be the embedded game that arises from $\varphi$.
From the inductive hypothesis it follows that the player who has a winning strategy in $\mathbf{G}$ also has a winning strategy in the evaluation game that is continued (recall Definition~\ref{def: Winning strategies in embedded game}). 

Suppose first that $\opl={\bf A}$, whence there must be (at least one) $\tlim<\tlimbound$ s.t. $\opl$ does not have winning strategy in $\mathbf{G}$ with the time limit $\tlim$. Since bounded embedded games are determined (by Corollary~\ref{the: Determinacy of the bounded embedded game}), it follows from the inductive hypothesis, that ${\bf E}=\pl$ must have a (timed) winning strategy in $\mathbf{G}[\tlim]$. Hence $\pl$ has a winning strategy in $\mathcal{G}(\mathcal{M},q,\coop{A}\psi\U\theta)$ ($\pl$ first chooses the time limit to be $\tlim$ and then follows her timed winning strategy in $\mathbf{G}[\tlim]$).

Suppose then that $\opl={\bf E}$, whence $\opl$ cannot have a (timed) winning strategy  in $\mathbf{G}$ with any  time limit $\tlim<\tlimbound$. Since bounded embedded games are determined (by Corollary~\ref{the: Determinacy of the bounded embedded game}), it follows from the inductive hypothesis, that ${\bf A}=\pl$ must have a winning strategy in $\mathbf{G}[\tlim]$, for every $\tlim<\tlimbound$. Hence $\pl$ has a winning strategy in $\mathcal{G}(\mathcal{M},q,\coop{A}\psi\U\theta)$ (after $\opl$ has chosen a time limit $\tlim$, then $\pl$ just follows his winning strategy in $\mathbf{G}[\tlim]$).

\item The case $\varphi=\coop{A}\psi\Rl\theta$ is proven dually to the case $\varphi=\coop{A}\psi\U\theta$. 
\qed
\end{itemize}

\subsection*{Proof of Lemma \ref{the: Winning time labels 2}}

Suppose that $\mathcal{L}_{\pl}(q)=\tlim>0$. Since $\mathcal{L}_{\pl}(q)\neq 0$, canonically timed strategy $(\tau_{\pl},t_{can})$ is not a timed winning strategy for $\pl$ in $\mathbf{G}[q,0]$. Therefore $\tau_\pl(q)$ is either some tuple of actions for $A$ or some response function for $\overline A$. Let $Q\subseteq\St$ be the set of states that is forced by $\tau_{\pl}(q)$. 
We first show that $\mathcal{L}_{\pl}(q')<\tlim$ for every $q'\in Q$. Since $(\tau_{\pl},t_{can})$ is a timed winning strategy in $\mathbf{G}[q,\tlim]$, it must also be a timed winning strategy in $\mathbf{G}[q',t_{can}(\tlim,q')]$ for every $q'\in Q$. Hence by the definition of canonical timer $\tlim>t_{can}(\tlim,q')=\mathcal{L}_{\pl}(q')$ for every $q'\in Q$.

Suppose first that $\tlim$ is a successor ordinal. If we would have $\mathcal{L}_{\pl}(q')<\tlim\!-\!1$ for every $q'\in Q$, then $(\tau_{\pl},t_{can})$ would be a winning strategy in $\mathbf{G}[q,\tlim\!-\!1]$, and thus we would have $\mathcal{L}_{\pl}(q)\leq\tlim\!-\!1$. Hence $\tlim>\mathcal{L}_{\pl}(q')\geq\tlim\!-\!1$ for some $q'\in Q$ and thus $\max\{\mathcal{L}_{\pl}(q')\mid q'\in Q\}=\tlim\!-\!1$.

Suppose then that $\tlim$ is a limit ordinal. If $\tlim'<\tlim$ would be an upper bound for the winning time labels in $Q$, then we would have $\mathcal{L}_\pl(q')<\tlim'\!+\!1$ for every $q'\in Q$. Hence $(\tau_{\pl},t_{can})$ would be a timed winning strategy in $\mathbf{G}[q,\tlim'\!+\!1]$, and thus we would have $\mathcal{L}_{\pl}(q)\leq\tlim'\!+\!1<\tlim$. This is impossible and thus $\sup\{\mathcal{L}_{\pl}(q')\mid q'\in Q\}=\tlim$.

\subsection*{Proof of Theorem \ref{the: Equivalence of unbounded compositional and game-theoretic semantics}}

We show by induction on $\varphi$ that the following equivalence holds for every $q\in\St$.
\[
	\mathcal{M},q\models\varphi \;\text{ iff }\; \mathcal{M},q\models_u^g\varphi.
\]
\begin{itemize}[leftmargin=*]
\item Suppose that $\varphi=p$ ($p\in\Prop$).
This case is trivial since the winning condition for the ending position $({\bf E},q,p)$ corresponds to the truth condition for $p$ in the compositional semantics.

\item Suppose that $\varphi=\neg\psi$.

Suppose first that $\mathcal{M},q\models\neg\psi$, i.e. $\mathcal{M},q\not\models\psi$. Therefore, by the inductive hypothesis, $\mathcal{M},q\not\models_u^g\psi$. Since, by Proposition~\ref{the: Determinacy of the evaluation games}, evaluation games are determined, $({\bf E},q,\psi)$ is a winning position for Abelard. Therefore, by the rule for negation, $({\bf A},q,\neg\psi)$ is also a winning position for Abelard. Since the all rules of the game for the players are symmetric, dually $({\bf E},q,\neg\psi)$ is a winning position for Eloise and thus $\mathcal{M},q\models_u^g\neg\psi$.

Suppose then that $\mathcal{M},q\models_u^g\neg\psi$. Now, by the rule for negation, $({\bf A},q,\psi)$ is a winning position for Eloise and thus dually $({\bf E},q,\psi)$ is a winning position for Abelard. But since only one of the the players can have a winning strategy from the position $({\bf E},q,\psi)$, we must have $\mathcal{M},q\not\models_u^g\psi$. Hence, by the inductive hypothesis, $\mathcal{M},q\not\models\psi$, i.e. $\mathcal{M},q\models\neg\psi$.

\item Suppose that $\varphi=\psi\vee\theta$.

Suppose first that $\mathcal{M},q\models\psi\vee\theta$, i.e. $\mathcal{M},q\models\psi$ or $\mathcal{M},q\models\theta$. If the former holds, we define $\strev_{\bf E}({\bf E},q,\psi\vee\theta)=\psi$ and else define $\strev_{\bf E}({\bf E},q,\psi\vee\theta)=\theta$. Now, by the inductive hypothesis, the next position of the game must a winning position for Eloise and thus $\mathcal{M},q\models_u^g\psi\vee\theta$.

Suppose then that $\mathcal{M},q\models_u^g\psi\vee\theta$, i.e. Eloise has a winning strategy $\strev_{\bf E}$ from the position $({\bf E},q,\psi\vee\theta)$.
Now $\strev_{\bf E}$ picks either $\psi$ or $\theta$.
If $\strev_{\bf E}({\bf E},q,\psi\vee\theta)=\psi$, then by the inductive hypothesis we must have $\mathcal{M},q\models\psi$. And if $\strev_{\bf E}({\bf E},q,\psi\vee\theta)=\theta$, then we analogously have $\mathcal{M},q\models\theta$.
Therefore $\mathcal{M},q\models\psi\vee\theta$.

\item Suppose that $\varphi=\coop{A}\atlx\psi$.

Suppose first that $\mathcal{M},q\models\coop{A} \X\psi$, i.e., there exists a collective strategy $S_A$ such that for each $\Lambda\in\paths(q,S_A)$, we have $\mathcal{M},\Lambda[1]\models\psi$. 
Let $\strev_{\bf E}({\bf E},q,\coop{A} \X\psi)$ be the tuple in $\avact(A,q)$ which is determined by $S_A$ at $q$. Now, regardless of the actions chosen by Abelard for the  agents in $\overline{A}$, the resulting state $q'$ must be $\Lambda[1]$ for some $\Lambda\in\paths(q,S_A)$ and thus $\mathcal{M},q'\models\psi$. 
By the inductive hypothesis $\mathcal{M},q'\models_u^g\psi$, i.e. Eloise has a winning strategy from the  position $({\bf E},q',\psi)$. Therefore we have $\mathcal{M},q\models_u^g\coop{A} \X\psi$.

Suppose then that $\mathcal{M},q\models_u^g\coop{A} \X\psi$, i.e. Eloise has a winning strategy $\strev_{\bf E}$ from the position $({\bf E},q,\coop{A} \X\psi)$.
Now $\strev_{\bf E}$ assigns some tuple of choices for the agents in $A$. We can now construct  a related collective strategy $S_A$  by using those choices at $q$; the choices at other states may be arbitrary. Let $\Lambda\in\paths(q,S_A)$. Now Abelard can choose such actions for $\overline A$ that the resulting state is $\Lambda[1]$. Since $\strev_{\bf E}$ is a winning strategy, the position $({\bf E},\Lambda[1],\psi)$ is a winning position for Eloise. Hence $\mathcal{M},\Lambda[1]\models_u^g\psi$ and thus, by the inductive hypothesis, $\mathcal{M},\Lambda[1]\models\psi$. Therefore $\mathcal{M},q\models\coop{A} \X\psi$.

\item Suppose that $\varphi=\coop{A}\psi\atlu\theta$.

Suppose first that $\mathcal{M},q\models\coop{A}\psi \U\theta$, i.e., there exists $S_A$ such that for each $\Lambda\in\paths(q,S_A)$, there is some $i\geq 0$\, such that $\mathcal{M},\Lambda[i]\models\theta$ and $\mathcal{M},\Lambda[j]\models\psi$ for every $j < i$. Let $\strev_{\bf E}({\bf E},q,\coop{A}\psi\U\theta)$ be the strategy $\sigma_{\bf E}$, defined as follows: 
let $\sigma_{\bf E}(q')=\theta$ for each $q'\in\St$ where $\mathcal{M},q'\models\theta$, and for all other states $q'\in\St$, let $\sigma_{\bf E}(q')$ be the tuple of actions for the agents in $A$ determined by $S_A$. Now, regardless of the actions of Abelard, all of the states that are reached in the embedded game must be states $\Lambda[i]$ for some $\Lambda\in\paths(q,S_A)$ and $i\geq 0$. 
Thus, when Eloise uses $\sigma_{\bf E}$, a state $q'$ where $\mathcal{M},q'\models\theta$ is reached in a finite number of rounds.  If Abelard ends the game before that at some state $q'$, then we have $\mathcal{M},q'\models\psi$. By the inductive hypothesis, either of these cases will result in an exit position which is a winning position for Eloise. Therefore we have $\mathcal{M},q\models_u^g\coop{A}\psi \U\theta$.


Suppose then that $\mathcal{M},q\models_u^g\coop{A}\psi \U\theta$, i.e. Eloise has a winning strategy $\strev_{\bf E}$ from the position $({\bf E},q,\coop{A}\psi \U\theta)$.
Now $\strev_{\bf E}({\bf E},q,\coop{A}\psi \U\theta)$ is some strategy $\sigma_{\bf E}$ for the corresponding embedded game.
We can now construct  a collective strategy $S_A$ that is related to the strategy $\sigma_{\bf E}$: for any state where $\sigma_{\bf E}$ assigns some tuple of actions for agents in $A$, we define the same actions for $S_A$. For states where $\sigma_{\bf E}$ instructs to end the game, we may define arbitrary actions for $S_A$. We will use this same method from now on, when we define collective strategies $S_A$ related to the strategies of $\vrf$ in an embedded game.

Let $\Lambda\in\paths(q,S_A)$. Now, when Eloise uses $\sigma_{\bf E}$ 
there will be actions of Abelard such that the states of the
embedded game are on $\Lambda$ until some configuration $\Lambda[i]$ at which Eloise ends the game at the exit position $({\bf E},\Lambda[i],\theta)$. (Note that since $\strev_{\bf E}$ is a winning strategy, Eloise must always end the embedded game after finitely many steps.)
Since $\strev_{\bf E}$ is a winning strategy, we must have $\mathcal{M},\Lambda[i]\models\theta$ by the inductive hypothesis. Let then $j < i$. Now Abelard can end the game after $j$ rounds at the position $({\bf E},\Lambda[j],\psi)$. Since that is a winning position for Eloise, we must have $\mathcal{M},\Lambda[j]\models\psi$ by the inductive hypothesis. Therefore we conclude that $\mathcal{M},q\models\coop{A}\psi \U\theta$.

\item Suppose that $\varphi=\coop{A}\psi\atlr\theta$.

Suppose that $\mathcal{M},q\models\coop{A}\psi \Rl\theta$, i.e. there exists a strategy $S_A$ such that for each $\Lambda\in\paths(q,S_A)$ and $i\geq 0$ either $\mathcal{M},\Lambda[i]\models\theta$ or there is  $j < i$\, such that $\mathcal{M},\Lambda[j]\models\psi$. Let $\strev_{\bf E}({\bf E},q,\coop{A}\psi\Rl\theta)$ be the strategy $\sigma_{\bf E}$ that is defined as follows: $\sigma_{\bf E}(q')=\psi$ for each $q'\in\St$ where $\mathcal{M},q'\models\psi$ and for all other states $q'\in\St$, let $\sigma_{\bf E}(q')$ be the tuple of actions for the agents in $A$ determined by $S_A$. Now, all states that are reached in the embedded game must be states $\Lambda[i]$ for some $\Lambda\in\paths(q,S_A)$ and $i\geq 0$. Thus, when Eloise uses $\sigma_{\bf E}$, she will either stay at states $q'$ where $\mathcal{M},q'\models\theta$ for infinitely long or reach a state $q'$ where $\mathcal{M},q'\models\psi$ at some point. Hence either 1) the embedded game continues infinitely long, whence Eloise wins the whole evaluation game, or 2)  by the inductive hypothesis, the exit position is a winning position for Eloise. Therefore we have $\mathcal{M},q\models_u^g\coop{A}\psi \Rl\theta$.


Suppose then that $\mathcal{M},q\models_u^g\coop{A}\psi \Rl\theta$, i.e. Eloise has a winning strategy $\strev_{\bf E}$ from the position $({\bf E},q,\coop{A}\psi \Rl\theta)$.
Now $\strev_{\bf E}({\bf E},q,\coop{A}\psi\Rl\theta)$ is a strategy $\sigma_{\bf E}$.
Let $S_A$ be the collective strategy that is related to $\sigma_{\bf E}$ 
and let $\Lambda\in\paths(q,S_A)$. Now, when Eloise uses $\sigma_{\bf E}$, there
exist actions of Abelard such that the states of the embedded game are on $\Lambda$ (until a state is reached where Eloise ends the game---if such a state exists).
We need to show that for every $i\geq 0$ either $\mathcal{M},\Lambda[i]\models\theta$ or there is $j<i$ such that  $\mathcal{M},\Lambda[j]\models\psi$. Let $i\geq 0$. If Eloise ends the game before $i$ rounds, the game ends at an exit position $({\bf E},\Lambda[j],\psi)$ for some $j < i$, and we can conclude that $\mathcal{M},\Lambda[j]\models\psi$ by the  inductive hypothesis. If Eloise does not end the game before $i$ rounds have been played, then Abelard can end it in the exit position $({\bf E},\Lambda[i],\theta)$. We can then conclude that  $\mathcal{M},\Lambda[i]\models\theta$ by the inductive hypothesis. Therefore $\mathcal{M},q\models\coop{A}\psi \Rl\theta$.
\end{itemize}
This completes the proof of the theorem. \qed

\subsection*{Proof of Theorem \ref{the: Equivalence of finitely bounded compositional and game-theoretic semantics}}

By Lemma~\ref{the: Time uniform strategies}, we may assume for this proof that all of the (winning) strategies that Eloise uses in embedded games depend on states only. This amounts to assuming that their domain is the set of states instead of configurations. We also recall that timers are not needed in the finitely bounded case.
We show by induction on $\varphi$ that the following holds for every $q\in\St$.
\[
	\mathcal{M},q\models_f\varphi \;\text{ iff }\; \mathcal{M},q\models_f^g\varphi.
\]
\begin{itemize}[leftmargin=*]
\item The cases where $\pos=({\pl},q,p)$ ($p\in\Prop$),
$\pos=({\pl},q,\neg\psi)$, $\pos=({\pl},q,\psi\vee\theta)$
or $\pos=({\pl},q,\coop{A} \X\psi)$ are treated exactly as in the proof of
Theorem~\ref{the: Equivalence of unbounded
compositional and game-theoretic semantics}.

\item Suppose that $\varphi=\coop{A}\psi\U\theta$.

Suppose first that $\mathcal{M},q\models_{f}\coop{A}\psi \U\theta$, i.e. there exist some $n<\omega$ and $S_A$ such that for each $\Lambda\in\paths(q,S_A)$, there is some $i\leq n$\, such that  $\mathcal{M},\Lambda[i]\models_{f}\theta$ and $\mathcal{M},\Lambda[j]\models_{f}\psi$ for every $j < i$.
Let $\strev_{\bf E}({\bf E},q,\coop{A}\psi \U\theta)=(n,\sigma_{\bf E})$, where $\sigma_{\bf E}(q')=\theta$ for each $q'\in\St$ where $\mathcal{M},q'\models_{f}\theta$ and for all other states $q'\in\St$ let $\sigma_{\bf E}(q')$ be the tuple of actions for the agents in $A$ chosen according to $S_A$.
Now, when Eloise chooses the time limit to be $n$ and uses $\sigma_{\bf E}$, then, regardless of the actions of Abelard, all states that are reached in the game must be states $\Lambda[i]$ for some $\Lambda\in\paths(q,S_A)$ and $i\leq n$. Thus Eloise can reach a state $q'$ where $\mathcal{M},q'\models_f\theta$ in $n$ rounds; if Abelard ends the game before that at some state $q'$, then $\mathcal{M},q\models_f\psi$. By the inductive hypothesis, either of these cases will result in an exit position which is a winning position for Eloise. Therefore we have $\mathcal{M},q\models_f^g\coop{A}\psi \U\theta$.


Suppose then that $\mathcal{M},q\models_f^g\coop{A}\psi \U\theta$,  i.e. Eloise has a winning strategy $\strev_{\bf E}$ from the position $({\bf E},q,\coop{A}\psi \U\theta)$. 
Now $\strev_{\bf E}({\bf E},q,\coop{A}\psi\U\theta)=(n,\sigma_{\bf E})$ where $n<\omega$. Let $S_A$ be the collective strategy that is related to the strategy $\sigma_{\bf E}$ (see the corresponding part in the proof of Theorem~\ref{the: Equivalence of unbounded compositional and game-theoretic semantics}). Let $\Lambda\in\paths(q,S_A)$. Now, when Eloise uses $\sigma_{\bf E}$ there exist actions of Abelard such that the states of the embedded game will be on $\Lambda$ until some configuration $(n\!-\!i,\Lambda[i])$ (with $i\leq n$) at which Eloise ends the game at the position $({\bf E},\Lambda[i],\theta)$. (If she does not end the game, then it will automatically end at the exit position $({\bf E},\Lambda[n],\theta)$.) 
Since $\strev_{\bf E}$ is a winning strategy, we have $\mathcal{M},\Lambda[i]\models_f\theta$ by the inductive hypothesis. Let then $j < i$. Since Abelard can end the game after $j$ rounds at the position $({\bf E},\Lambda[j],\psi)$, by the inductive hypothesis, we have $\mathcal{M},\Lambda[j]\models_f\psi$. Hence $\mathcal{M},q\models_f\coop{A}\psi \U\theta$.

\item Suppose that $\varphi=\coop{A}\psi\Rl\theta$.

Suppose first that $\mathcal{M},q\models_{f}\coop{A}\psi \Rl\theta$, i.e., for all $n<\omega$, there exists a collective strategy $S_{A,n}$ such that for each $\Lambda\in\paths(q,S_{A,n})$ and $i\leq n$, we have either $\mathcal{M},\Lambda[i]\models_{f}\theta$ or there is some $j < i$\, such that $\mathcal{M},\Lambda[j]\models_{f}\psi$. Let $\strev_{\bf E}({\bf E},q,\coop{A}\psi\U\theta)$ be a function that maps every $n<\omega$ to $\sigma_{{\bf E},n}$, where $\sigma_{{\bf E},n}$ is defined as follows.
Let $\sigma_{{\bf E},n}(q')=\psi$ for each $q'\in\St$ where $\mathcal{M},q'\models_{f}\psi$. For all other states $q'\in\St$, let $\sigma_{{\bf E},n}(q')$ be the tuple of actions for the agents in $A$ chosen according to $S_{A,n}$. Now, when Eloise uses $\sigma_{{\bf E},n}$, all states that can be  reached must be states $\Lambda[i]$ for some $\Lambda\in\paths(q,S_A)$ and $i\leq n$. Thus Eloise will either stay at states $q'$ where $\mathcal{M},q'\models_f\theta$ for $n$ rounds or reach a state $q'$ where $\mathcal{M},q'\models_f\psi$ while maintaining the truth of $\theta$. By the inductive hypothesis, either of these cases will result in an exit position which is a winning position for Eloise. Therefore we have $\mathcal{M},q\models_f^g\coop{A}\psi \Rl\theta$.


Suppose then that $\mathcal{M},q\models_f^g\coop{A}\psi \Rl\theta$,  i.e. Eloise has a winning strategy $\strev_{\bf E}$ from the position $({\bf E},q,\coop{A}\psi \Rl\theta)$. 
Now $\strev_{\bf E}({\bf E},q,\coop{A}\psi\Rl\theta)$ assigns some strategy $\sigma_{{\bf E},n}$ for every $n<\omega$. Let $n<\omega$ and let $S_{A,n}$ be the collective strategy that is related to $\sigma_{{\bf E},n}$. Let $\Lambda\in\paths(q,S_{A,n})$. Now, when Eloise plays using $\sigma_{{\bf E},n}$, there are some actions of Abelard such that the states of the embedded game are on $\Lambda$ until Eloise ends the game or $n$ rounds have lapsed. We need to show that for every $i\leq n$ either $\mathcal{M},\Lambda[i]\models_f\theta$ or there is some $j<i$ such that  $\mathcal{M},\Lambda[j]\models_f\psi$. Let $i\leq n$. If Eloise ends the game before $i$ rounds have gone, the game ends at the position $({\bf E},\Lambda[j],\psi)$ for some $j < i$. Then, by the inductive hypothesis, $\mathcal{M},\Lambda[j]\models_f\psi$. If Eloise does not end the game before $i$ rounds have lapsed, then Abelard may end it at the position $({\bf E},\Lambda[i],\theta)$, whence, by the inductive hypothesis, $\mathcal{M},\Lambda[i]\models_f\theta$. Therefore $\mathcal{M},q\models_f\coop{A}\psi \Rl\theta$.
\end{itemize}
This completes the proof of the theorem. \qed

\end{document}